\theoremstyle{plain}
\newcommand{\itref}[1]{\textit{(\ref{#1})}}
\newcommand{\N}{\mathbb{N}}
\newcommand{\Z}{\mathbb{Z}}
\newcommand{\s}{\mathfrak{s}}
\newtheorem{theorem}{Theorem}[section]
\newtheorem{proposition}[theorem]{Proposition}
\newtheorem{corollary}[theorem]{Corollary}
\newtheorem{lemma}[theorem]{Lemma}
\theoremstyle{remark}
\newtheorem{remark}{Remark}
\theoremstyle{definition}
\newtheorem{definition}{Definition}[section]
\newcommand{\geom}{\operatorname{Geom}}
\newcommand{\diam}{\mathrm{diam}}
\newcommand{\cA}{\ensuremath{\mathcal A}}
\newcommand{\cB}{\ensuremath{\mathcal B}}
\newcommand{\cC}{\ensuremath{\mathcal C}}
\newcommand{\cE}{\ensuremath{\mathcal E}}
\newcommand{\cF}{\ensuremath{\mathcal F}}
\newcommand{\cI}{\ensuremath{\mathcal I}}
\newcommand{\cL}{\ensuremath{\mathcal L}}
\newcommand{\cN}{\ensuremath{\mathcal N}}
\newcommand{\cP}{\ensuremath{\mathcal P}}
\newcommand{\cS}{\ensuremath{\mathcal S}}
\newcommand{\cT}{\ensuremath{\mathcal T}}
\newcommand\numberthis{\addtocounter{equation}{1}\tag{\theequation}}
\newcommand{\bbE}{{\ensuremath{\mathbb E}} }
\newcommand{\bbP}{{\ensuremath{\mathbb P}} }
      \let\e=\varepsilon
\let\m=\mu            
\let\r=\rho
\title{The critical density of the Stochastic Sandpile Model}
\author{Concetta Campailla\thanks{Sapienza Universit\`a di Roma, Dipartimento di Matematica, Roma, Italy.}
\and
Nicolas Forien\thanks{CEREMADE, CNRS, Université Paris-Dauphine, Université PSL,
75016 Paris, France}
\and
Lorenzo Taggi\footnotemark[1]}
\begin{document}

\maketitle

\begin{abstract}
We study the stochastic sandpile model on $\mathbb{Z}^d$ and demonstrate that the critical density is strictly less than one in all dimensions. This generalizes a previous result by Hoffman, Hu, Richey, and Rizzolo (2022), which was limited to the one-dimensional case. In addition, we show that the critical density is strictly positive on any vertex-transitive graph, extending the earlier result of Sidoravicius and Teixeira (2018) and providing a simpler proof.
\end{abstract}

\section{Introduction}

Many natural and complex phenomena, such as earthquakes, snow avalanches, forest fires, and others, exhibit sudden large-scale events triggered by small disturbances. Bak, Tang, and Wiesenfeld proposed a general explanation for these phenomena, which they referred to as \textit{self-organized criticality} \cite{BTW}. Their research brought about a major shift in understanding and became one of the most frequently referenced papers in physics throughout the following decade.

Various processes have been suggested as universal models of self-organized criticality \cite{Bak}. Bak, Tang, and Wiesenfeld introduced the abelian sandpile model, which operates through a deterministic process of redistribution \cite{BTW}. The model’s intricate mathematical framework has been studied extensively by various branches of mathematics. However, physicists later observed that the model lacks the universality typically expected from systems displaying self-organized criticality. For example, it is highly dependent on the initial conditions and other details \cite{FLW10, Dhar1, JJ10}.

Manna introduced a probabilistic variant of the abelian sandpile model
\cite{Ma}, later known as the \textit{stochastic sandpile model}
(SSM). This version demonstrates more robust universality properties
compared to the abelian sandpile. It is widely believed that this
model is the appropriate framework for investigating the phenomenon of
self-organized criticality and its universal characteristics
\cite{Dick, Lub, HPC}. However, its rigorous mathematical analysis presents various challenges. For this reason, despite some progress, in recent years, the focus of the mathematics community has shifted to a close variant of SSM, the activated random walk model (ARW), which is more tractable mathematically and is believed to belong to the same universality class as SSM \cite{Lub}.
In particular, progress has been made toward understanding the
properties of the critical curve for ARW. These advancements include
the demonstration that it is strictly between zero and one~\cite{AFG,
FG, Hu, Basu, HRR, RS, ST}, continuity  as a function of the deactivation
parameter \cite{TaggiEssential} and universality \cite{RSZ}.
Furthermore, the equivalence of critical thresholds has been
established in one dimension \cite{Hoffman2024} and explored in wider
generality \cite{LS}. Additionally, some
bounds on the mixing time of the so-called 'driven-dissipative'
dynamics have been provided \cite{Lev21, BS}.

In contrast, our understanding of the stochastic sandpile model
remains more limited. In particular, one of the key open questions for
this class of models --- whether the critical density is below one --- remains
unresolved for any graph beyond \(\mathbb{Z}\) \cite{HHRR}. 
In this
paper, we resolve this conjecture for \(\mathbb{Z}^d\) in any
dimension \(d \geq 1\). Additionally, we provide a new, simplified
proof that the critical density is strictly positive on any
vertex-transitive graph, thereby extending previous results \cite{PR,
RS, ST12}. 
The central tool in deriving the upper bound for the critical threshold is a novel stochastic comparison with a variant of the activated random walk model. This comparison enables us to apply recently developed techniques from the analysis of ARW to the stochastic sandpile model. 
We hope that this new stochastic comparison may help to make further progress in the rigorous understanding of the stochastic sandpile model in the future.

\subsection{Definition and main results}

Let us consider a finite graph $G = (V, E)$. At time zero, each vertex has a number of particles, which is an independent Poisson random variable with mean $\mu > 0$. At any time, each vertex is declared \textit{stable} if it hosts at most one particle and \textit{unstable} otherwise. The dynamics evolve in continuous time. Each vertex is associated with an independent Poisson clock with rate one. When the clock rings, if the vertex is unstable, two particles from that vertex perform an independent simple random walk step, while if the vertex is stable, nothing happens.

On infinite vertex-transitive graphs (like, for example, $\mathbb{Z}^d$), the dynamics can be defined by taking the limit of large finite balls with absorbing boundary conditions \cite{Andjel, RS}. Criticality of the SSM on infinite graphs is defined with respect to whether the system remains active or not. We say that the system \textit{locally fixates} if every vertex is visited only finitely many times by the particles and eventually becomes stable; otherwise, we say that the system stays \textit{active}.

Rolla and Sidoravicius \cite{RS} showed in wide generality the
existence of a critical value $\mu_c \geq 0$ such that if $\mu >
\mu_c$, the system is almost surely active, and if $\mu < \mu_c$, the
system fixates almost surely. They proved that $\mu_c \in
[\frac{1}{4}, 1]$ in $\mathbb{Z}$. The lower bound in $\mathbb{Z}$ was
then improved to $\mu_c \geq \frac{1}{2}$ in \cite{PR}. Sidoravicius
and Teixeira \cite{ST12} employed renormalisation techniques to show that $\mu_c > 0$ in $\mathbb{Z}^d$ for each $d \in \mathbb{N}$.

Our first theorem shows that the critical density of the stochastic sandpile is strictly positive on vertex-transitive graphs.

\begin{theorem}\label{thm:theorem1}
    For any vertex-transitive graph $G$ of degree at least two, the critical density of the stochastic sandpile model satisfies \[ \mu_c > 0.\]
\end{theorem}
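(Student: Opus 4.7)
The plan is to prove $\mu_c > 0$ by upper-bounding the expected number of topplings at a fixed vertex via a subcritical branching-process comparison, made rigorous through the abelian-in-distribution property of the stochastic sandpile model. The argument proceeds in three stages.

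First, I would reduce to a uniform finite-volume estimate. Let $\Lambda_n \subset G$ be a finite subgraph with absorbing boundary, and let $m_n(o)$ denote the number of topplings at a fixed vertex $o$ during SSM stabilization on $\Lambda_n$ starting from i.i.d.\ Poisson$(\mu)$ initial heights. By the standard monotone coupling between SSM on $\Lambda_n$ and in infinite volume, it suffices to prove $\sup_n \mathbb{E}[m_n(o)] < \infty$ for $\mu$ below some explicit threshold $\mu_0(d) > 0$ depending only on the degree $d$. Stabilization is then carried out using a sequential, adapted toppling order --- legitimate thanks to the distributional abelianness of SSM --- in which the Poisson initial heights are revealed only when a particle first reaches each site.

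Second, I would dominate this exploration by a Galton--Watson branching process whose nodes represent topplings. When a vertex $x$ topples, it emits exactly two particles, each performing one random-walk step to a uniformly chosen neighbor. A freshly-visited neighbor triggers a further toppling only if it already hosts at least one particle; the probability of this event under Poisson$(\mu)$ is $1 - e^{-\mu} \leq \mu$. Consequently, each toppling produces, in expectation, at most $2\mu$ offspring topplings arising from pristine sites, plus a bounded additive correction from the low-probability event that both emitted particles land on the same neighbor. For $\mu$ small enough, the branching process is subcritical, its total progeny is integrable, and $\mathbb{E}[m_n(o)]$ stays uniformly bounded in $n$, yielding local fixation.

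The principal obstacle is controlling the contribution of revisits, i.e.\ topplings triggered at sites whose initial Poisson seed has already been consumed and that accumulate particles through multiple visits. These create dependencies which the naive branching bound ignores. To handle them, I would use vertex transitivity of $G$ together with the mass-transport principle to convert spatial averages into per-vertex expectations, showing that the effective offspring mean incurs only a small correction that can be absorbed into the subcriticality threshold for $\mu$ small. Vertex transitivity is essential here: it allows all vertices to be treated on the same footing and lets the recursion close without any graph-specific Green-function estimate beyond what follows from the symmetry of $G$.
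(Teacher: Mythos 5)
Your proposal takes a genuinely different route from the paper. The paper does not use a branching-process comparison at all. Instead, it adapts the \emph{weak stabilisation} technique of~\cite{ST} from ARW to SSM, proving (Proposition~\ref{propweak}, Corollary~\ref{corrweak}) that if the system is active then, after stabilising a large ball, each ball $I_x$ well inside hosts a particle with probability at least $\tfrac{d-1}{d^3}-\varepsilon$; it then combines this with a ghost-particle bookkeeping argument and the Green-function estimate of Lemma~\ref{lemma4.1CST} to extract a lower bound $\mu\geq c(d)>0$ from particle conservation. This is a ``density'' argument with no exploration tree. So the comparison is worth making, but more importantly there are real gaps in your plan that I don't think your proposed tools can close.

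The central gap is the revisit contribution, which you acknowledge but wave away. In SSM, particles do not die: after $x$ topples, its two particles travel elsewhere and may later return to any previously-explored site. A site that has been toppled and ends with $1$ particle will become unstable at the \emph{very next} arrival, so the per-revisit triggering probability is not $O(\mu)$ — it is essentially $\mathbb{P}(\text{site currently holds }1\text{ particle})$, which is a complicated, history-dependent quantity of order one, not order $\mu$. On recurrent graphs (e.g.\ $\Z$, $\Z^2$, which your theorem must cover) each emitted particle revisits old sites many times before settling, so revisits are not a perturbation on the fresh-site term: they are the main term. Vertex transitivity plus the mass-transport principle gives you conservation identities of the type ``expected particles emitted from $o$ equals expected particles received at $o$'', but this is an \emph{equality}, not a contraction: it yields the trivial constraint $\mu\leq 1$ and gives no handle on the correlations needed to push the effective offspring mean below $1$. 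I do not see how mass transport turns the revisit correction into something ``small that can be absorbed'' — you would need a genuinely new decorrelation or monotonicity idea here, and nothing in the proposal supplies it.

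There is also a smaller but non-negligible issue with your fresh-site offspring estimate. You call the event that both emitted particles land on the same neighbour a ``low-probability'' event giving a ``bounded additive correction.'' Its probability is $1/d$, not small in $\mu$, and when it occurs the receiving site acquires $\geq 2$ particles and topples \emph{deterministically}. So the correct first-generation offspring mean is roughly $\tfrac 1d + 2\bigl(1-\tfrac 1d\bigr)(1-e^{-\mu})$, which is fine for subcriticality at small $\mu$ — but the phrasing suggests you haven't fully accounted for this. Moreover a site receiving two particles may already carry $\mathrm{Poisson}(\mu)\geq 2$ of its own and hence topple several times immediately, which is again not a bounded correction within the single-node picture. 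None of these is fatal on its own, but together with the revisit gap they indicate that the branching comparison needs far more structure than you have described to become a proof, and the paper avoided it for precisely these reasons.
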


The novelty of the result is not only in extending the class of graphs for which it holds but also in presenting a simpler proof.

Our second theorem establishes a new upper bound for the critical density of the SSM.
It is generally not difficult to show that $\mu_c \leq 1$.
Indeed, if the particle density is strictly greater than one, there must necessarily exist a positive density of unstable sites at all times, ensuring that the system remains active.
It is therefore interesting to ask whether $\mu_c < 1$, i.e., whether the system is active for particle densities that are strictly below one (but sufficiently large).
Our next theorem provides a positive answer to this question in $\mathbb{Z}^d$ for all dimensions $d \geq 1$, thereby extending the previous results of \cite{HHRR}, which were limited to one dimension.

\begin{theorem}\label{thmupperbound}
    In any dimension $d \geq 1$, the critical density of the stochastic sandpile model on $\mathbb{Z}^d$ satisfies \[\mu_c < 1.\]
\end{theorem}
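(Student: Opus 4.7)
The plan is to prove Theorem~\ref{thmupperbound} by reducing it, via a novel stochastic comparison, to the analogous result for a variant of the activated random walk (ARW) model. The key observation is that the SSM, which has no explicit sleep mechanism, should be ``at least as active'' as a suitably chosen ARW with finite sleep rate $\lambda$: a particle in SSM only rests when it sits alone with no incoming particles, whereas an ARW particle can fall asleep at rate $\lambda$ as soon as it is alone. The goal is to make this intuition rigorous through a site-wise (Diaconis--Fulton style) coupling.

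First, I would fix a parameter $\lambda \in (0,\infty)$ to be chosen later and design a variant of ARW, call it ARW$'$, whose dynamics mirror SSM's two-particle topplings as closely as possible while still retaining an explicit sleep rule. A natural candidate is to modify ARW so that topplings move particles in pairs when available, with a sleep instruction fired only for isolated active particles. On each vertex of $\mathbb{Z}^d$ one then installs a common stack of instructions used to drive both the SSM and ARW$'$, invoking the abelian property of the two models to execute instructions in any convenient order while comparing their odometers. The first key lemma to prove is then the following coupling statement: the odometer of the SSM at density $\mu$ stochastically dominates the odometer of ARW$'$ at the same density and parameter $\lambda$. Concretely, this amounts to a site-by-site inductive check that every sleep instruction used by ARW$'$ can be absorbed into the SSM dynamics without reducing the number of topplings, so that whenever ARW$'$ forces a site to be visited infinitely often, the SSM must do so as well.

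Once the coupling is in place, I would invoke recently developed ARW techniques (in particular those giving $\zeta_c(\lambda) < 1$ in every dimension for a finite range of $\lambda$) and adapt them to ARW$'$, producing some $\mu < 1$ such that ARW$'$ at density $\mu$ and sleep rate $\lambda$ is almost surely active on $\mathbb{Z}^d$. By the coupling, the SSM at density $\mu$ is then almost surely active, yielding $\mu_c < 1$.

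I expect the hardest part to be the design of ARW$'$ together with the coupling: one needs the variant to be close enough to standard ARW that known techniques apply verbatim (or with minor modifications to the underlying Lyapunov or weight-function estimates), yet sufficiently aligned with SSM's pair-toppling structure to make the monotone coupling valid. The asymmetry between the simultaneous two-particle moves in SSM and the single-particle moves in ARW is the main technical obstacle; overcoming it is likely where the phrase ``novel stochastic comparison'' in the introduction carries its full weight.
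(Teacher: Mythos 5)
Your high-level plan---compare SSM with a sleep-rate-$\lambda$ variant of ARW and then import ARW activity results---matches the paper's spirit, and you correctly identify the direction of the domination (SSM odometer $\geq$ some ARW-type quantity). However, two essential ideas are missing, and one proposed design choice would lead you astray.

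First, the ARW variant used in the paper is \emph{not} a pair-toppling ARW. Trying to make ARW move particles in pairs to mirror SSM's two-particle topplings is the wrong resolution of the asymmetry you flag: the paper instead works with \emph{half-topplings} of the SSM (moving one particle at a time via $\phi_x$) and defines ARWD, an ARW variant where a particle, after each walk, \emph{instantaneously falls asleep} on arrival at an empty site of the settling set $A$. The instantaneous deactivation, not pair-motion, is what aligns the two models. Moreover, ARWD is explicitly non-abelian, so your reliance on ``the abelian property of the two models'' to compare odometers would break down on the ARW$'$ side; the paper has to work with a specific toppling strategy and stopping time $T$ rather than an odometer supremum.

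Second, and more fundamentally, the proposal offers no mechanism for where the sleep rate $\lambda$ actually comes from in the coupling. The paper's central insight is a parity argument: in SSM, a lone particle sitting on a site whose odometer is \emph{even} is interpreted as ``sleeping'', and it is ``reactivated'' when the site is next visited. The key lemma (Lemma~\ref{lemma-add-instructions}) shows, via an injective map on truncated instruction arrays that inserts a round trip $x \to y \to x$, that conditionally on the history, the odometer at such a site is odd with probability at least $1/(1+\Delta^3)$. This is what permits coupling the ARWD Bernoulli sleep variable so that ARWD sleeps whenever the SSM particle ``would be stuck'' (even odometer), giving the domination $\|m^A_\eta\| \succeq T(B,f,\tilde\eta\mathbbm{1}_B)$ with $\lambda=\Delta^3$. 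Your ``site-by-site inductive check that sleep instructions can be absorbed'' gestures at the goal but contains no analogue of this parity/counting argument, which is where the difficulty is concentrated. Finally, the paper does not go directly to activity on $\mathbb{Z}^d$: it first proves an exponential lower bound on the stabilisation time on the torus $\mathbb{Z}_n^d$ (Theorem~\ref{thm-exp-time}, via a multiscale dormitory-hierarchy argument on ARWD with sleep masks and $A$-stabilisation over all choices of $A$), and only then transfers this to non-fixation on $\mathbb{Z}^d$ via Theorem~\ref{thm-torus-Zd}. Without both the parity lemma and the torus route, the proposal does not yet constitute a proof.
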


Theorem \ref{thmupperbound} is one of the main results of our paper. Together with Proposition \ref{propdom} below, which presents the new coupling argument, these form the core contributions of our work.

\subsection{Proof outline}

The proof of Theorem \ref{thm:theorem1} requires the adaptation to the stochastic sandpile model of the ``weak stabilisation'' method, which was introduced in \cite{ST} for the activated random walk model. The proof is short and is presented in Section \ref{sec:prooflowerbound}, together with a short description of the method.

For the proof of Theorem \ref{thmupperbound}, the new central idea is a stochastic comparison between the SSM and a version of the activated random walk model, which we refer to as the activated random walk with instantaneous deactivation (in short: ARWD), see Proposition \ref{propdom} below.

Similarly to the activated random walk model, in ARWD, particles can
be either active or sleeping. Active particles get activated whenever
they share a vertex with other active particles, and they fall asleep
with probability $\frac{\lambda}{1 + \lambda}$ independently at each
discrete step if they are alone, where $\lambda > 0$. Informally, the
main difference with the activated random walk model is that, in addition, the active particles fall asleep \textit{instantaneously} whenever they jump to a vertex.

The coupling between the two models requires moving particles ‘in parallel’ by using the same jump instructions. Even though in SSM particles can only be of one type, we can view each particle as ‘sleeping’ if it has just jumped to a vertex that has been toppled an even number of times. We think of a particle as ‘activated’ whenever such a vertex is visited by some other particle afterward. The coupling is then constructed in such a way that, whenever a particle becomes stable in SSM, the corresponding particle also falls asleep in ARWD. In some cases, it may happen that we are allowed to move a particle in SSM, but we are not allowed to move the corresponding particle in ARWD since it is sleeping. The opposite, however, never happens. This allows us to \textit{dominate from below} the activity in SSM by the activity in ARWD.

For the coupling to work, we need to restrict the set of vertices where the reactivation of sleeping particles is possible in both models, ignoring the fact that particles outside this set may still be active. Such a restriction is necessary to dominate from below using independent random variables the reactivation of sleeping particles in SSM within this set. The proof method also requires disallowing the deactivation of particles outside a certain arbitrary set of vertices $A$ in both models (for SSM, this is formalized through the introduction of the so-called '$A$-stabilisation', see Section \ref{sectionAstab} below) and then considering all possible choices of $A$. We refer to Section \ref{section-strategy-propdom} for a more formal description of the coupling argument.

Once the stochastic comparison is established, it remains to show that ARWD is active, thus deducing by the stochastic comparison that SSM is also active. For this, we adapt to ARWD the strategy which was developed in \cite{AFG}. We first show that for $\mu$ close enough to $1$ and less than one, the stabilisation time of the SSM on the torus is exponentially large:

\begin{theorem}
    \label{thm-exp-time}
    For every $d \geq 1$, for $\mu \in (0,1)$ close enough to $1$,
the stabilisation time $\mathcal{T}_n$
of the stochastic sandpile model on the torus $\mathbb{Z}_n^d = (\mathbb{Z}/n\mathbb{Z})^d$
started with i.i.d.\ Poisson numbers of particles on each site, with parameter $\mu$, satisfies
    \begin{equation}
    \label{exponential-time}
\exists\,c>0\quad
    \forall\,n\geq 1\qquad
    \mathbb{P}\big(\mathcal{T}_n < e^{c n^d}\big) < e^{-c n^d} \, .
    \end{equation}
\end{theorem}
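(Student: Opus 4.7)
The plan is to transfer the desired exponential lower bound from ARWD to SSM via the coupling of Proposition \ref{propdom}, and to establish it for ARWD by adapting the scheme of \cite{AFG}. Since that coupling dominates the activity in SSM from below by the activity in ARWD, the SSM stabilisation time $\mathcal{T}_n$ on the torus is at least the stabilisation time of ARWD on $\mathbb{Z}_n^d$ started from the same Poisson initial configuration. Hence it suffices to prove \eqref{exponential-time} with $\mathcal{T}_n$ replaced by the ARWD stabilisation time on $\mathbb{Z}_n^d$.

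\textbf{Adapting \cite{AFG} to ARWD.} The scheme of \cite{AFG} combines two main ingredients: an abelian-type reordering that turns the stabilisation into a tractable site-wise exploration of the torus, and a large-deviation estimate for the Poisson initial data at density $\mu$ close to 1 showing that, with probability at least $1-e^{-c n^d}$, the odometer function on the torus is of size at least $e^{c n^d}$. Since each toppling in ARWD (as in SSM) corresponds to the ring of a rate-one Poisson clock on an unstable site, an odometer of order $e^{c n^d}$ forces the stabilisation time to be at least $e^{c n^d}$ with the same exponentially small failure probability. The next step is therefore to verify both ingredients for ARWD: the large-deviation part should follow the Poisson concentration argument of \cite{AFG} essentially verbatim once the exploration is in place, while the abelian reordering is the ingredient that requires genuine modification.

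\textbf{Main obstacle.} The hardest step is to obtain a workable abelianness for ARWD. In ARW the order of topplings does not affect the final stable configuration or the odometer; in ARWD, because a jumping particle sleeps instantaneously and its later reactivation depends on whether other active particles subsequently visit its destination, reordering topplings is delicate and a priori order-dependent. The natural resolution is to set up a coupling-compatible toppling order---of the type already used in the construction of Proposition \ref{propdom}, where ``sleeping'' is tied to the parity of the toppling count at the destination---under which the total odometer is either order-invariant or at least dominates an order-invariant quantity sufficient for the lower bound. Once this structure is secured, the site-wise exploration and the Poisson concentration estimate of \cite{AFG} should transfer with only bookkeeping modifications, yielding \eqref{exponential-time} for ARWD on $\mathbb{Z}_n^d$ and hence, via Proposition \ref{propdom}, for SSM.
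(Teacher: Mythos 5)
Your high-level plan (transfer from SSM to ARWD via Proposition~\ref{propdom}, then invoke an AFG-style multiscale estimate for ARWD) matches the paper's skeleton, but two of the steps you outline either skip a crucial reduction or point in the wrong direction.

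First, the transfer from $\mathcal{T}_n$ to the ARWD stabilisation time is not a direct consequence of Proposition~\ref{propdom}. That proposition compares $\|m_\eta^A\|$ — the number of half-topplings for \emph{$A$-stabilisation} with a \emph{fixed} settling set $A$ satisfying $|A|=|\eta|$ — to the ARWD stabilisation time started from the derived configuration $\tilde\eta\mathbbm{1}_B$ on a totally disconnected subset $B\subset A$. It does not compare the unrestricted SSM odometer $\|m_\eta\|$ to anything, and the set of eventually-occupied sites is a random outcome of the stabilisation, not an input. The paper's proof therefore has to condition on $|\eta|=k$, decompose over all possible final configurations $\mathbbm{1}_A$ with $|A|=k$, invoke Lemma~\ref{lemma-A-stab} to pass from $\|m_\eta\|$ to $\|m^A_\eta\|$ on the event $\{\eta_\infty=\mathbbm{1}_A\}$, and then union-bound over the $\binom{n^d}{k}$ choices of $A$. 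Making this work requires a Chernoff lower bound on $|\eta|$ (so $k\geq\mu_1 n^d$) and the entropy control $\psi(\mu_1)<\kappa/8$, where $\kappa$ is the exponent from Proposition~\ref{prop-ARWD-stab-time}. Your proposal contains none of this; as written, the implication ``$\mathcal{T}_n$ is at least the ARWD stabilisation time'' does not follow.

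Second, your ``main obstacle'' paragraph goes in the wrong direction. You propose to salvage an abelian-type reordering for ARWD, or at least to produce an order-invariant quantity that the odometer dominates. The paper does the opposite: it accepts that ARWD is non-abelian (this is stated explicitly after Definition~\ref{def-ARWD}) and bypasses the issue entirely by fixing a \emph{particular} toppling strategy $f$ and sleep mask $g^J$ (the dormitory hierarchy and ping-pong rally of Sections~\ref{section-hierarchy}–\ref{section-def-f-g}) and proving the exponential bound for that strategy alone. The induction hypothesis~\eqref{induction-hyp} is stated for all initial densities $\rho\in[\rho_0,1]$ precisely because monotonicity in the initial condition is unavailable, which is the concrete manifestation of non-abelianness. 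Trying to restore order-invariance is both unnecessary and, given the instantaneous-sleeping rule, unlikely to succeed. Relatedly, the Poisson concentration you invoke is not a ``verbatim'' import from \cite{AFG}: it enters through the conversion from Poisson$(\mu)$ particle counts to the Bernoulli density $\rho=\mathbb{P}(\mathrm{Poisson}(\mu)\geq 2)$ used as the ARWD initial law, together with the Chernoff and entropy bounds above. You should also flag, as the paper does, that converting the odometer lower bound into a statement about the continuous-time $\mathcal{T}_n$ requires a separate (small) argument adapted from \cite{FG}.
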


Since it is already known that $\mu_c < 1$ for the stochastic sandpile model in dimension $d = 1$ \cite{HHRR}, we provide the detailed proof only for $d \geq 2$.
Theorem \ref{thm-exp-time} implies Theorem \ref{thmupperbound} through the following result, which is an analog of Theorem 4 in \cite{FG}:

\begin{theorem}
    \label{thm-torus-Zd}
    For every $d \geq 1$, for every $\mu < \mu_c$, where $\mu_c$ is the critical density of the stochastic sandpile model on $\mathbb{Z}^d$, the property \eqref{exponential-time} does not hold.
\end{theorem}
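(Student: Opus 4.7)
The plan is to prove the contrapositive: assume $\mu < \mu_c$, so that the SSM on $\mathbb{Z}^d$ started from an i.i.d.\ Poisson$(\mu)$ configuration fixates almost surely. We will deduce that $\mathcal{T}_n$ grows at most polynomially in $n$ with high probability, in direct contradiction with \eqref{exponential-time}. The overall argument adapts to the SSM setting the approach of Theorem~4 in \cite{FG}, originally developed for the activated random walk model, and relies crucially on the abelian structure of SSM.

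The first step is to reduce control of $\mathcal{T}_n$ to control of the site-wise odometer $m_n(v)$, i.e.\ the total number of topplings at $v$ needed to stabilize the torus. With i.i.d.\ toppling instructions (pairs of uniformly chosen neighbors) presampled at every site, the SSM becomes an abelian rewriting procedure, and $m_n(v)$ is a deterministic, monotone function of the initial configuration. A standard Poisson-clock computation then gives $\mathcal{T}_n \leq C\, n^d \max_v m_n(v)$ with probability tending to one. It therefore suffices to prove $\max_v m_n(v) \leq n^A$ with high probability, for some finite $A = A(\mu, d)$.

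The second, central step couples the torus dynamics with the $\mathbb{Z}^d$ dynamics. Extend the i.i.d.\ Poisson$(\mu)$ initial configuration and the toppling instructions from a fundamental domain of the torus to all of $\mathbb{Z}^d$, keeping the i.i.d.\ Poisson$(\mu)$ marginal. Fixation on $\mathbb{Z}^d$ guarantees that the infinite-volume odometer $m_\infty(v)$ is almost surely finite at every site. Working on the high-probability event that the $\mathbb{Z}^d$ stabilization restricted to a sufficiently large box $\Lambda_{Rn}$ around the fundamental domain is spatially self-contained, the abelian property allows one to dominate $m_n(v)$ by the maximum of $m_\infty(\tilde v)$ over the lifts $\tilde v$ of $v$ lying in $\Lambda_{Rn}$. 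Translation invariance, the almost-sure finiteness of $m_\infty(0)$, and a union bound over the $n^d$ sites then yield $\max_v m_n(v) \leq n^A$ with high probability.

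\textbf{Main obstacle.} The most delicate point is this second step. Wrap-around on the torus has no counterpart in $\mathbb{Z}^d$, so the coupling is only faithful on events where the subcritical SSM dynamics remains spatially confined. Making this precise requires a quantitative locality estimate for subcritical SSM fixation --- roughly, that with high probability no particle travels macroscopically far from its initial site before the system becomes stable --- which plays the role of the analogous ARW estimate in \cite{FG} and should be obtainable via the abelian monotonicity techniques developed earlier in the paper.
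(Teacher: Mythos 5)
The overall plan (prove the contrapositive: $\mu < \mu_c$ implies fixation, hence fast torus stabilisation) is on the right track, but your step~2 has a genuine gap that is not a technicality you can defer.

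The gap: you want to deduce, from almost-sure fixation on $\mathbb{Z}^d$, a \emph{quantitative} tail bound on the infinite-volume odometer $m_\infty(0)$, so that a union bound over the $n^d$ sites of the torus yields $\max_v m_n(v)\leq n^A$ with high probability. But $\mu<\mu_c$ gives only $\mathcal{P}^\mu(m_\eta(0)<\infty)=1$ (Lemma~\ref{0-1law}); it gives no decay rate whatsoever, and no such rate is proved or used anywhere in this paper. The ``abelian monotonicity techniques developed earlier in the paper'' compare odometers on nested domains, they do not produce a tail estimate for $m_\eta(0)$. There is also no direct monotone coupling between the torus $m_n(v)$ and the $\mathbb{Z}^d$ odometer at a lift $\tilde v$: because of wrap-around, the torus configuration is not a restriction of the $\mathbb{Z}^d$ one, and the event that the $\mathbb{Z}^d$ stabilisation of a box is ``self-contained'' is exactly what you would need a quantitative locality estimate to control. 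So your plan to get $\max_v m_n(v)\leq n^A$ with high probability is circular as stated and aims at a conclusion strictly stronger than the theorem requires.

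What the paper actually does is much cheaper in the probability it needs. The negation of \eqref{exponential-time} only asks that, for every $c>0$ and some $n$, $\mathbb{P}(\mathcal{T}_n<e^{cn^d})\geq e^{-cn^d}$; one is allowed a stabilisation event whose probability is exponentially small, as long as its rate is $o(n^d)$. The paper therefore ports the proof of Theorem~4 of \cite{FG}: using the Rolla--Tournier non-fixation criterion (which transfers from ARW to SSM with no substantial change, as noted by Rolla) one constructs, on an event of probability roughly $e^{-O(n^{d-1})}$, a legal toppling sequence that stabilises the torus quickly. The only SSM-specific ingredient is the new lemma (stated and proved in the paper's last section) that, for a finite connected $A$ with at most one particle per site and at least one empty site, the stabilisation of $A$ keeps all particles inside $A$ with probability at least $\Delta^{-2|A|}$; this replaces the ARW fact that $k$ isolated active particles all fall asleep with probability $(\lambda/(1+\lambda))^k$. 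Since $|A|=O(n^{d-1})$ for the annulus used in that step, the resulting bound beats $e^{-cn^d}$ for large $n$, which is all the theorem needs. Your proposal, by contrast, would require the odometer tail control that neither $\mu<\mu_c$ nor the machinery of this paper supplies, so the ``main obstacle'' you flag is not a routine estimate to be supplied later but the actual missing idea.
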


%This theorem can be proved by adapting the proof of Theorem 4 in \cite{FG}. 

While the overall strategy to prove that ARWD is active is similar to
that in \cite{AFG}, the adaptation requires addressing some technical
challenges. These include the fact that, unlike in \cite{AFG}, not all
particles are active at time zero, the reactivation of sleeping
particles is restricted to a certain set of sites, and there is an
additional rule in ARWD governing when particles fall asleep. These
features make it harder for the model to be active in comparison to
the activated random walk. We believe these generalizations may have
some relevance on their own. We also remark that ARWD is an example of
a non-abelian particle system, and exploring this model may offer further insights.

 \begin{remark}
 The proofs in this paper can be adapted to a more general model in which an integer $k$ is specified, and a site is considered unstable if it contains at least $k$ particles; otherwise, it is stable. Each unstable site topples at rate $1$, sending $k$ of its particles to neighbours chosen uniformly and independently at random. Using our methods, we can then show that $\mu_c<k-1$.
 \end{remark}

 \paragraph*{Notation.}
 
We let $d_G ( \cdot, \cdot)$ be the graph distance in $G =(V,E)$,
$B(x,r)$ be the set of vertices which have graph distance at most $r$ from $x$.   
 We use the   notation $B(r)$ for $B(o,r)$, where $o \in V$ is the root of $G$.
 We also use the notation $\mathbb{N}_0 = \{0, 1, \ldots\}$ and
 $\mathbb{N} = \{1, 2, \ldots \}$.

\section{Diaconis-Fulton representation}
\label{sectionDiaconis}

In this section we describe the Diaconis-Fulton graphical representation for the dynamics of SSM and introduce several stabilisation procedures.

\subsection{Stabilisation}
 Let $G =(V, E)$ be an undirected,  locally finite, finite or infinite graph.  
 A configuration of the stochastic sandpile model on $G$ can be represented by a vector $\eta : V \to \N_0$,
 with $\N_0 =\N\cup \{0\}$, where $\eta(x)=k \in \N_0$ means that there are $k$ particles on the site $x$.
A site $x \in V$ is \textit{stable} in the configuration $\eta$ if $\eta(x) = 0$ or 1, and it is \textit{unstable} if $\eta(x) \geq 2$.
We say that a configuration $\eta$ is stable in a subset $V'\subset V$
if $\eta(x)\leq 1$ for all $x\in V'$.
 We now define, for every $x \in V$ and every $y$ neighbour of $x$ in the graph $G$, an operator $\tau_{xy}$ which corresponds to one particle jumping from $x$ to $y$: namely, if $\eta \in \N_0^V$ is such that $\eta(x)\geq 1$, we define
 \begin{equation*}
    \tau_{xy}\eta(z)=
    \begin{cases}
\eta(x)-1
&\quad\text{if }z=x\\
\eta(y)+1
&\quad\text{if }z=y\\
\eta(z) 
&\quad\text{otherwise.}
\end{cases}
\end{equation*}
Let us fix for now an array of instructions $\tau =
(\tau^{x,j}\,:\, x \in V,\, j \in \N)$,
where for every $x\in V$ and $j\in\N$,
 $\tau^{x,j} \in  \{ \tau_{xy}\,:\, y \sim x\}$.
Once this array is fixed, the configuration of the model at a certain time may be encoded in a couple $(\eta,\,h)$, where $\eta:V\to\N_0$ is the current configuration and $h:V\to\N_0$ is a function which counts the number of pairs of instructions already used at each site.
We say that we topple $x$ when we act on this couple $(\eta,\,h)$ 
through the operator $\Phi_x$, which is defined as
\begin{equation*}
    \Phi_x(\eta,h)=(\tau^{x, 2h(x)+2} \tau^{x, 2h(x)+1}  \eta,\, h+\mathbbm{1}_x)\,,
\end{equation*}
provided that $x$ is {unstable} in $\eta$.
The toppling operation $\Phi_x$ at $x$ is termed \textit{legal} for a couple $(\eta,\,h)$
 when $x$ is unstable in $\eta$, and \textit{illegal} otherwise.
We let $h = 0$ indicate $h(y) = 0$ for all $y \in V$, and we abbreviate $\Phi_x(\eta,0)$ as  $\Phi_x \eta$.

 Given a
sequence $\alpha = (x_1,\,x_2,\,\dots , \,x_k)$ of sites, we denote by $\Phi_\alpha = \Phi_{x_k} \Phi_{x_{k-1}} \cdots 
\Phi_{x_2} \Phi_{x_1}$
the composition of the topplings at the sites $x_1,\,x_2,\,\dots,\,x_{k-1},\,x_k$, in that order.
We call $\Phi_\alpha$ legal for a configuration $(\eta,\,h)$ if $\Phi_{x_1}$
is legal for $(\eta,\,h)$ and $\Phi_{x_\ell}$
is legal for
$\Phi_{(x_1,\,\dots,\,x_{\ell-1})}(\eta,\,h)$ for each $\ell = 2,\,\dots,\,k$. In this case we call $\alpha$ a legal sequence of
topplings for $(\eta,\,h)$.

For every $x\in V$, we define
\begin{equation*}
m_\alpha(x)=\sum_{\ell=1}^k \mathbbm{1}_{\{x_\ell=x\}}\,.
\end{equation*}
In other words, $m_\alpha(x)$ indicates the number of times that the site $x$ appears in the sequence $\alpha$. 
%We write $m_\alpha \leq m_\beta$ if
%$m_\alpha(x) \leq m_\beta(x)$ for all $x \in V$, and $\eta\leq\eta'$ if $\eta(x) \leq \eta'(x)$ for all $x \in V$.
%Given $V' \subset V$, we denote by $|\eta|_{V'}=\sum_{x \in V'}\eta(x)$ the total number of particles on the sites of $V'$ in the configuration $\eta$. If $V'=V$, we simply write $|\eta|=|\eta|_V$.
Given
a toppling sequence $\alpha= (x_1,\,x_2,\,\dots, \,x_k)$ and a
subset~$V'$ of~$V$, we say that
$\alpha$ is contained in $V'$, and we write $\alpha\subset V$, if all its elements are in $V'$,  and we say that $\alpha$ stabilises a given configuration $\eta$
in $V'$ if every site $x \in V'$ is stable in the particle configuration of $\Phi_\alpha\eta$.

For any subset $V'\subset V$, any particle configuration $\eta$  and any array of
 instructions $\tau$, we define the \textit{odometer}
 \begin{equation*}
 m_{V',\eta}
 = 2 
 \sup_{\alpha\subset V',\,\alpha\text{ legal for  $(\eta, 0)$}}\,
 m_\alpha.
 \end{equation*}
 In the particular case $V'=V$, we simply write $m_{\eta}=m_{V,\eta}$.
 It follows from the Abelian property (see \cite[Lemma 2]{RS}) that, if there exists a legal sequence $\alpha\subset V'$ which stabilises $\eta$ in $V'$, then $m_{V',\eta}=2 m_\alpha$, that is to say, $m_{V',\eta}$ counts the number of times that a particle jumps from $x$  in the stabilisation of $V'$ starting from configuration $\eta$ using the instructions in $\tau$, using whatever legal stabilising sequence.
For any $ V'' \subset V' \subset V$, we write
\begin{equation*}
    \|m_{V', \eta}\|_{V''}=\sum_{x \in V''}m_{V', \eta}(x)
\end{equation*}
for the number of topplings on the sites of $V''$ during the stabilisation of $\eta$ in $V'$. In the particular case
where $V'' = V'$, we simply write $\|m_{V', \eta}\|=\|m_{V', \eta}\|_{V'}$ and, if $V''=V'=V$, we write $\|m_{\eta}\|$. 
If
$\|m_{V',\eta}\|<\infty$, then the Abelian property ensures that the configuration reached when stabilising $\eta$ in $V'$
does not depend on the toppling sequence used and thus we can denote by $\eta_{V',\infty}$ the final configuration.  In the case where  $V' = V$, we simply write $\eta_{\infty}$.

We now introduce a probability measure on the space of instructions and of particle configurations. 
Let $\cP$ be a probability measure on the set of all possible fields of instructions,
which is such that the instructions $(\tau^{x,j} )_{x\in V, j \in \N}$, are independent, with, for every $x \in V$ and any $j\in \N$, $\cP^\mu(\tau^{x,j}=\tau_{xy})=\frac{1}{d_x}$  for any $y \in V$ neighboring $x$, where
 $d_x$ is the degree of the vertex $x$ in $G$.

  The initial distribution $\eta$ is i.i.d.\ with a Poisson number of particles with parameter $\mu$ on each site. Let us
 denote by $P_\mu$ the corresponding probability measure on the set $\N_0^V$.
We write $\cP^\mu= \cP \otimes P_\mu$ for the joint law of $\tau$ and $\eta$, and we let $\bbP^\mu$ denote the probability measure induced by the SSM process.
That is to say, compared to $\cP^\mu$, the measure $\bbP^\mu$ also contains the randomness relative to the times at which each site topples.
 Write $E$, $E^\mu$, $\bbE^\mu$ for the expectations with respect to the measures $\cP$, $\cP^\mu$, $\bbP^\mu$.

The following lemma gives a 0-1 law for local fixation and relates it to a property of the stabilisation odometer $m_{\eta}$:
 
\begin{lemma}[Lemma 4 in \cite{RS}]
\label{0-1law}
Let $G =(V,E)$ be an infinite vertex-transitive graph and let $x \in V$ be arbitrary. 
%Let $\nu$ be a translation-invariant, ergodic distribution with finite density. 
Then,
\begin{equation*}
    \bbP^{\mu}(\text{the system locally fixates})=\mathcal{P}^\mu(m_{\eta}(x)<\infty) \in \{0,1\}
    \,.
\end{equation*}
%In particular, the system is almost surely active if and only if
%\begin{equation}
% \forall D>0\,,  \quad \lim_{L\to \infty}\mathcal{P}^\mu(m_{ B(x,L),\eta}(x)\geq D)=1 \,.
%\end{equation}
\end{lemma}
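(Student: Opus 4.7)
The plan is to split the lemma into three parts: first, identify (up to null sets) the dynamical event of local fixation with the static event $\{m_\eta(y)<\infty$ for every $y\in V\}$ via the Abelian property; second, show that under $\cP^\mu$ this global event coincides with the one-site event $\{m_\eta(x)<\infty\}$; and third, deduce that this event has probability $0$ or $1$ by ergodicity.

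For the first part I would use the Diaconis--Fulton realisation of the Markov dynamics: attach independent rate-one Poisson clocks to each vertex and, whenever the clock at an unstable site $y$ rings, apply the next pair of unused instructions $\tau^{y,\cdot}$. Writing $N_t(y)$ for the number of topplings at $y$ up to time $t$, every finite prefix of this sequence is legal in the sense of Section~\ref{sectionDiaconis}, so $2N_t(y)\le m_\eta(y)$ for every $t$; in particular, $m_\eta(y)<\infty$ for all $y$ implies local fixation. Conversely, suppose local fixation held but $m_\eta(y_0)=\infty$ for some $y_0$, and let $T$ be a time after which the dynamics never topples $y_0$ again. By \cite[Lemma 2]{RS} applied to the configuration at time $T$ (with the remaining instructions), some legal sequence $\beta$ starting from this configuration would eventually destabilise $y_0$; since the clocks at the sites appearing in $\beta$ ring infinitely often, the dynamics must reproduce those topplings in some order and make $y_0$ unstable again, contradicting the choice of $T$.

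For the second part the nontrivial inclusion is $\{m_\eta(x)<\infty\}\subseteq\{\forall y\in V:\ m_\eta(y)<\infty\}$ up to a $\cP^\mu$-null set. It suffices to show that if $m_\eta(y)=\infty$ and $z$ is a neighbour of $y$, then $\cP^\mu$-almost surely $m_\eta(z)=\infty$ as well, and to iterate along a path from $x$ to any other vertex using connectedness of $G$ (restricting to a connected component if needed). The argument is of Borel--Cantelli type: the instructions $(\tau^{y,j})_{j\in\N}$ are i.i.d.\ uniform on the neighbours of $y$, so whenever a legal sequence topples $y$ at least $n$ times, a positive fraction of the corresponding jumps lands at $z$, almost surely as $n\to\infty$; since $z$ holds at most one particle in any stable configuration, stabilising this influx requires a comparable number of topplings at $z$, forcing $m_\eta(z)=\infty$.

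For the third part, the law $\cP^\mu=\cP\otimes P_\mu$ is a product measure on $V$ (both the instructions and the Poisson initial occupations are i.i.d.\ across vertices), hence invariant and ergodic under any transitive subgroup $\Gamma\subseteq\mathrm{Aut}(G)$. The event $\{\forall y\in V:\ m_\eta(y)<\infty\}$ is $\Gamma$-invariant, so by ergodicity it has probability $0$ or $1$; combined with the previous two steps this yields the lemma. The main obstacle in a fully rigorous write-up lies in the spreading step: one must express $m_\eta(y)=\infty$ in terms of a (random) family of legal sequences witnessing arbitrarily large toppling counts at $y$, and then show that the law of large numbers for the instruction array applies uniformly over this family, for which the Abelian property is the essential tool.
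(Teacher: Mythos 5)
The paper does not reprove this lemma; it cites Lemma~4 of~\cite{RS}, and your three-part outline (equivalence of the Poisson-clock dynamics with the Diaconis--Fulton odometer via the Abelian property, spreading of $\{m_\eta(y)=\infty\}$ to neighbours via the strong law of large numbers on the jump instructions, and a $0$--$1$ law from ergodicity of the i.i.d.\ product measure under the transitive action) is exactly the argument of that reference. Your reconstruction is correct and takes essentially the same approach as the cited proof.
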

This lemma tells us that, in order to establish local fixation almost
surely, it is enough to show that a site is toppled a finite number of
times with positive probability.

\subsection{Stabilisation with half-topplings}
Given an array of instructions $\tau=(\tau^{x,j}: x \in V,\, j \in \N)$ and a configuration $\eta$, we introduce a stabilisation strategy with the half-toppling operation, which moves the
particles one by one.

A \textit{half-toppling} consists in sending only one particle to a neighboring site chosen
at random (i.e., following the appropriate instruction) and is possible when $\eta(x) \geq 1$. Fix an array of instructions $\tau$. We say we half-topple $x$ when we act on a  particle configuration $\eta$  satisfying $\eta(x) \geq 1$ and on the vector $h$ through the operator $\phi_x$, which is defined as
\begin{equation*}
    \phi_x(\eta,h)=(\tau^{x,2 h(x)+1} \eta,\, h+  \frac{1}{2}\mathbbm{1}_x)\,.
\end{equation*}
As before, we abbreviate $\phi_x(\eta, 0)$ by $\phi_x \eta$.
For a particle configuration~$\eta:V\to\N_0$ and an
odometer~$h:V\to(1/2)\N_0$, we say that a site~$x$ is half-unstable
in~$(\eta,\,h)$ if either~$\eta(x)\geq 2$ or~$\eta(x)=1$
and~$h(x)\notin\N_0$ (and otherwise the site is said half-stable).
For a sequence $\alpha=(x_1,\,\dots,\,x_k)$
we
denote, for any $\ell \in \{1,\dots, k\}$, \smash{$\alpha^{(\ell)}=(x_1,\,\dots,\, x_\ell)$},
and we define the operator $\phi_\alpha$ as the composition
$\phi_{x_k}  \phi_{x_{k-1}} \cdots  \phi_{x_1}$.
We say that the sequence $\alpha=(x_1,\,\dots, \,x_k)$ is
\textit{admissible} for $(\eta,\,h)$ if for any $\ell \in \{1,\,\dots,
\,k\}$, the particle configuration of~$\phi_{\alpha^{(\ell)}}(\eta,\,h)$ contains
at least one particle at~$x_\ell$.
Given $\alpha$ admissible for $\eta$, we say that $\alpha =(x_1,\,\dots, \,x_k)$ is
\textit{legal} for the stabilisation with half-topplings of
$(\eta,\,h)$ (in short: {half-legal}) 
 if for every $\ell \in \{1,\,\dots, \,k\}$, the site~$x_\ell$ is
half-unstable in~$\phi_{\alpha^{(\ell)}}(\eta,\,h)$.
 Moreover, we say that the sequence of topplings $\alpha$ 
 \textit{stabilises} $(\eta,\,h)$ in $V^{\prime}$ via half-topplings (in short: it is half-stabilising) if it is half-legal for $\eta$,
 it is contained in $V^\prime$ and every site $x \in V^{\prime}$ is
half-stable in~$\phi_\alpha(\eta,\,h)$.
We have
\begin{equation*}
 m_{V',\eta}
 =  
 \sup_{\alpha\subset V',\,\alpha\text{ half-legal for $\eta$}}\,
 m_\alpha,
 \,
 \end{equation*}
for any $V^\prime \subset V$
and the Abelian property also holds for the stabilisation with half-topplings
 (see 
\cite[Section 6]{RS} for details).
Thus, 
$m_{V',\eta}$ counts the number of times that a particle jumps from $x$ both  in the
stabilisation and in the half-stabilisation of $\eta$ in $V'$ using the instructions in $\tau$, using whatever stabilising or half-stabilising sequence  $\alpha$.

\subsection{Modified stability rules with respect to a fixed settling set}
\label{sectionAstab}
\begin{definition}
Given a subset $A\subset V$, a site $x\in A$ is said  \textit{$A$-stable} for a couple $(\eta,\,h)$ if $\eta(x)=0$ or if $\eta(x)=1$ and $h(x)\in \N_0$, whereas a site $x\in V\setminus A$ is said $A$-stable for $(\eta,\,h)$ if $\eta(x)=0$.
Otherwise, the site is said~$A$-unstable.
We call an admissible sequence of half-topplings $\alpha=(x_1, \dots, x_k)$ \textit{A-legal} for $\eta$ if for any $\ell \in \{1\, \dots, k\}$, the site~$x_\ell$ is~$A$-unstable in~$\phi_{\alpha^{(\ell)}}\eta$.
  We say that a couple $(\eta,\,h)$ is \textit{A-stable} in a set
$V'\subset V$ if all the sites of $V'$ are $A$-stable for $(\eta,\,h)$.
\end{definition}
\begin{definition}
\label{def-A-stab}
We say
that a sequence of half-topplings $\alpha$ \textit{A-stabilises}
$\eta$ in $V'\subset V$ if $\alpha$ is admissible and $\phi_\alpha
\eta$ is $A$-stable in $V'$.
We also define the odometer of the $A$-stabilisation of $V'$ as
\[
m_{V',\eta}^A
=
\sup_{\alpha\subset V',\ \alpha \text{ is }  A\text{-legal}}
m_\alpha
\,.
\]
In the particular case $V'=V$, we omit $V'$ and simply write
${m}^A_{\eta}$. 
\end{definition}

Similarly to stabilisation and half-stabilisation,   the Abelian
property   also holds for the $A$-stabilisation with minor changes in the proofs (see \cite[Section 3]{RS}).
For any $ V''\subset V'$, denote by
\begin{equation*}
    \|{m}^A_{V', \eta}\|_{V''}=\sum_{x \in V''}{m}^A_{V', \eta}(x)
\end{equation*}
the number of half-topplings on the sites of $V''$ during the
$A$-stabilisation of $\eta$ in $V'$. In the particular case
where $V'' = V'$, we simply write \smash{$\|{m}^A_{V', \eta}\|=\|{m}^A_{V',
\eta}\|_{V'}$} and, if $V''=V'=V$, we write \smash{$\|{m}^A_{\eta}\|$}. 
If \smash{$\|{m}^A_{V', \eta}\|< \infty$}, which means that $\eta$ can be
$A$-stabilised in $V'$ with
a finite number of half-topplings, then the Abelian property ensures
that the configuration reached when stabilising $\eta$ in $V'$
does not depend on the toppling sequence used, which allows us to
write this final configuration $\eta^A_{V', \infty}$.  In the case
where  $V' = V$, we simply write $\eta^A_{\infty}$. Note that if
$V'=V=A$ then $m_\eta^A={m}_{\eta}$ and $\eta_{\infty}^A=\eta_\infty$.

\begin{lemma}
\label{lemma-A-stab}
If $\eta_\infty=\mathbbm{1}_A$ then $m_\eta=m_{\eta}^A$.
\end{lemma}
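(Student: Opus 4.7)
My plan is to exhibit a single finite sequence of half-topplings that simultaneously half-stabilizes and $A$-stabilizes $\eta$ in $V$, and then to apply the abelian property for each of the two notions of stabilization to conclude that $m_\eta^A = m_\alpha = m_\eta$. The hypothesis $\eta_\infty=\mathbbm{1}_A$ enters only to force the final configuration of a half-stabilization to satisfy the stronger $A$-stability constraint automatically.

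First, I would pick a finite half-legal sequence $\alpha$ half-stabilizing $\eta$ in $V$; such a sequence exists because the assumption $\eta_\infty=\mathbbm{1}_A$ forces $\|m_\eta\|<\infty$. The abelian property for half-stabilization then gives $m_\alpha=m_\eta$ and $\phi_\alpha(\eta,0)=(\mathbbm{1}_A,h_\alpha)$ for some $h_\alpha$.

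Second, I would verify two direct comparisons of definitions. (i) Every half-legal step of $\alpha$ is also $A$-legal: at a site $x\in A$ the half-unstability and $A$-unstability conditions coincide verbatim, whereas at $x\notin A$ half-unstability ($\eta(x)\geq 2$, or $\eta(x)=1$ with non-integer odometer) automatically implies $\eta(x)\geq 1$, which is exactly the $A$-unstability condition at sites outside $A$. (ii) The terminal configuration $(\mathbbm{1}_A,h_\alpha)$ is $A$-stable in $V$: sites outside $A$ carry zero particles and hence are $A$-stable, while sites of $A$ carry exactly one particle with $h_\alpha(x)\in\N_0$, the integrality being forced by half-stability of $\alpha$ precisely at sites where $\eta(x)=1$.

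Combining (i) and (ii), $\alpha$ is an $A$-stabilizing $A$-legal sequence, so the abelian property for $A$-stabilization (which the authors note requires only minor modifications from the standard proof) yields $m_\eta^A=m_\alpha$. Together with $m_\alpha=m_\eta$ this gives $m_\eta=m_\eta^A$. I do not anticipate any serious obstacle: the lemma is essentially bookkeeping, built on the inclusion $\{\text{half-legal}\}\subset\{A\text{-legal}\}$ (which alone yields $m_\eta\leq m_\eta^A$) plus the observation that, for the specific terminal state $\mathbbm{1}_A$, the two stability conventions coincide site by site. The only mildly delicate point is the integrality of $h_\alpha$ at sites of $A$ in step (ii), which is precisely where the hypothesis $\eta_\infty=\mathbbm{1}_A$ (as opposed to some weaker upper bound on $\eta_\infty$) enters in an essential way.
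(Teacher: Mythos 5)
Your proof is correct and follows essentially the same approach as the paper: exhibit a half-legal, half-stabilizing sequence $\alpha$, observe that it is automatically $A$-legal and that its terminal configuration $(\mathbbm{1}_A,h_\alpha)$ is $A$-stable, then invoke the abelian property for $A$-stabilization to conclude $m^A_\eta = m_\alpha = m_\eta$. The extra verifications in your steps (i) and (ii) are exactly the two implicit facts the paper's shorter proof relies on.
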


\begin{proof}
  If $\eta_\infty=\mathbbm{1}_A$ then there exists a half-legal
sequence $\alpha$ such that $\phi_\alpha\eta=(\mathbbm{1}_A,\,h)$ with
$h(x)$ integer for every $x\in A$.
    Then we have $m_\eta=m_\alpha$.
    
    Yet, this final configuration $(\mathbbm{1}_A,\,h)$ is not only stable but also $A$-stable.
    Besides, as any half-legal sequence, this sequence $\alpha$ is also $A$-legal.
    Thus, $\alpha$ is a $A$-legal sequence which $A$-stabilises $\eta$, whence by the Abelian property $m_\eta^A=m_\alpha=m_\eta$.
\end{proof}

\section{Proof of Theorem \ref{thm:theorem1}}
\label{sec:prooflowerbound}
The proof of Theorem \ref{thm:theorem1} requires the introduction of the weak stabilisation technique for the stochastic sandpile model. This method was introduced for the activated random walk model in \cite{ST}.

\subsection{Weak stabilisation}
We introduce the weak stabilisation strategy for the stochastic
sandpile model. Given an array of instructions $\tau=(\tau^{x,j}: x
\in V,\, j \in \N)$, a configuration $\eta$ and $\alpha$ a legal
sequence for $(\eta, 0)$, we use the notation $\eta_\alpha$ for the particle configuration which is obtained by toppling the vertices of the sequence $\alpha$, i.e., the first component of the pair $\Phi_\alpha(\eta)$,
using the convention that $\eta_{\varnothing} = \eta$. 
For any $x \in V$, let $I_x=B(x,1)$ be the set of vertices whose graph distance from $x$ is at most one.

\begin{definition}[Weakly stable configurations]
Given a set $V^\prime \subset V$, we say that the configuration $\eta$
is \textit{weakly stable} in $V^\prime$ with respect to $x \in
V^\prime$ if one of the following two conditions is fulfilled:
\begin{enumerate}
\item[(i)] $\eta$ is stable in $V^\prime$,
\item[(ii)]  $\eta$ is stable in $V^\prime \setminus I_x$, there exists a site $z \in I_x$ such that $\eta(z)= 2$ and the other sites of $I_x$ are empty in $\eta$. 
\end{enumerate}
For conciseness, we simply say that $\eta$ is weakly stable in $(x, V^\prime)$.
\end{definition}

\begin{definition}[Weak stabilisation]
Given a set $V^\prime \subset V$, a vertex $x \in V^\prime$, and an array $\tau$, we say that the sequence of topplings $\alpha =(x_1, \ldots, x_k)$ is \textit{legal for the weak stabilisation} of $\eta$ in $V^\prime$ with respect to $x$ (in short: $x$-weakly legal) if it is legal for $(\eta, 0)$ and if 
\[
x_\ell \in I_x  \quad  \mbox{and} \quad 
 \eta_{\alpha^{(\ell-1)}} (x_\ell) = 2
\implies \exists z \in I_x \,  :\,  z  \neq x_\ell \mbox{ and }  \eta_{\alpha^{(\ell-1)}} (z) \geq 1,
\]
for each $\ell \in \{1, 2, \ldots, k\}$.
We say that the sequence of topplings $\alpha =(x_1, \ldots, x_k)$ \textit{weakly stabilises $\eta$ in $(x, V^\prime)$} if it is contained in $V^\prime$, it is $x$-weakly legal for $(\eta, 0)$, and $\eta_\alpha$ is weakly stable in $(x, V^\prime)$.
\end{definition}
Similarly to stabilisation, half-stabilisation and $A$-stabilisation, the Abelian property holds also for the weak stabilisation
with minor changes in the proofs (see [21, Section 3]).
An $x$-weakly legal sequence ensures that, whenever a vertex in $I_x$ is toppled, there will still be a particle in $I_x$ in the subsequent step. This observation leads to the following lemma.
\begin{lemma}\label{lemma:weaklyproperty}
For arbitrary $V^\prime \subset V$ and $x \in V^\prime$, let $\alpha =(x_1, \ldots, x_k)$ be a sequence which weakly stabilises $\eta$ in $(x, V^\prime)$.
Suppose that there exists $\ell \in \{0, 1, \ldots, k \}$ such that $\eta_{\alpha^{(\ell)}} (z) \geq 1$ for some $z \in I_x$.  Then there exists $z \in I_x$ such that  $\eta_\alpha(z) \geq 1$.
\end{lemma}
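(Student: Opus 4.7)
The plan is to prove the stronger monotonicity statement: for every $j \in \{\ell,\,\ell+1,\,\ldots,\,k\}$, the configuration $\eta_{\alpha^{(j)}}$ contains at least one particle inside $I_x$. Specializing to $j = k$ then yields the lemma. The base case $j = \ell$ is exactly the hypothesis, so the content of the proof is the inductive step: assuming $\eta_{\alpha^{(j)}}$ has a particle in $I_x$, show that so does $\eta_{\alpha^{(j+1)}}$, i.e., that toppling $x_{j+1}$ cannot empty $I_x$.

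The inductive step splits according to whether $x_{j+1}$ lies in $I_x$. If $x_{j+1} \notin I_x$, then the toppling at $x_{j+1}$ leaves every particle at every site of $I_x$ in place, and may even add particles to neighbours of $x_{j+1}$ that belong to $I_x$; in particular, $I_x$ remains non-empty. If $x_{j+1} \in I_x$, one distinguishes two subcases. When $\eta_{\alpha^{(j)}}(x_{j+1}) \geq 3$, the site $x_{j+1}$ itself retains at least one particle after the toppling, and hence $I_x$ is non-empty. When $\eta_{\alpha^{(j)}}(x_{j+1}) = 2$, the $x$-weakly legal hypothesis applies and produces some $z \in I_x$ with $z \neq x_{j+1}$ and $\eta_{\alpha^{(j)}}(z) \geq 1$; since the toppling of $x_{j+1}$ only decreases the particle count at $x_{j+1}$ and leaves (or increases) the count at every other site, we have $\eta_{\alpha^{(j+1)}}(z) \geq 1$, so $I_x$ is still non-empty.

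I do not anticipate any real obstacle: the $x$-weakly legal condition is crafted precisely to enforce the preservation property above, so the proof amounts to unpacking this intention. The only subtle point is the borderline case $\eta_{\alpha^{(j)}}(x_{j+1}) = 2$ with $x_{j+1} \in I_x$, where one must verify that the witness $z$ supplied by the $x$-weak legality is not destroyed by the toppling itself, which is immediate from $z \neq x_{j+1}$ combined with the fact that a single toppling only removes particles from the toppled site.
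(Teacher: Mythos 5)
Your proof is correct and takes essentially the same approach as the paper: the paper's proof is a terse description of exactly the invariance argument you carry out, namely that the $x$-weak legality condition guarantees $I_x$ cannot be emptied by any single toppling once it contains a particle. Your version simply makes the induction explicit with a clean case analysis, which is a fair and faithful expansion of the paper's argument.
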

\begin{proof}
The lemma follows  from the definition of weakly legal sequence. Indeed, according to the definition of a $(x, V^\prime)$-weakly legal sequence, we can only topple a vertex in $I_x$ if it contains at least three particles, or if it contains exactly two particles and another vertex in $I_x$ hosts at least one particle. These conditions ensure that, if a particle jumps to $I_x$ at any point, $I_x$ will continue to host at least one particle at the end of the weak stabilisation, when we obtain a weakly stable configuration.
\end{proof}

Weak stabilisation is used for the proof of the next proposition, providing a lower bound for the probability that stabilisation ends with at least one particle in $I_x$.

\begin{proposition} \label{propweak}
Let $G = (V,E)$ be a regular graph with degree $d$. Let $A_{x, L}$ be the event that there is at least one particle in $I_x$ at the end of the stabilisation of $B(L)$. Then, for any $L \in \mathbb{N}$ and $x \in B(L)$,
\begin{equation*}
    \mathcal{P}^\mu(A_{x, L})\geq \frac{d-1}{d^3}-\mathcal{P}^\mu(m_{ B(L), \eta}(x)=0).
\end{equation*}
\end{proposition}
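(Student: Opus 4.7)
The plan is to establish the bound via a union bound applied to a specific event $E$ on the random instructions such that $\mathcal{P}^\mu(E) = \frac{d-1}{d^3}$ and $E \cap \{m_{B(L),\eta}(x) \geq 1\} \subseteq A_{x,L}$. These two properties together give
\[
\mathcal{P}^\mu(A_{x, L}) \;\geq\; \mathcal{P}^\mu(E) - \mathcal{P}^\mu\bigl(m_{B(L),\eta}(x) = 0\bigr) \;\geq\; \frac{d-1}{d^3} - \mathcal{P}^\mu\bigl(m_{B(L),\eta}(x) = 0\bigr).
\]

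To build the event $E$, for each ordered pair of distinct neighbors $(y, y')$ of $x$ I would let $E_{y, y'}$ be the event that $\tau^{x,1} = \tau_{xy}$, $\tau^{x,2} = \tau_{xy'}$, $\tau^{y,1} = \tau_{yx}$, and $\tau^{y',1} = \tau_{y'x}$, and take $E = \bigcup_{y \neq y'} E_{y, y'}$. These events are pairwise disjoint, since the pair $(y,y')$ is encoded by $\tau^{x,1}$ and $\tau^{x,2}$, and by independence of the instructions each $E_{y,y'}$ has probability $1/d^4$. There are $d(d-1)$ such pairs, so $\mathcal{P}^\mu(E) = (d-1)/d^3$. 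On $E$, the first toppling of $x$ distributes its two particles to two distinct neighbors $y, y'$, and the first toppling of each of $y, y'$ redirects one particle back to $x$.

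To prove the containment $E \cap \{m_{B(L),\eta}(x) \geq 1\} \subseteq A_{x, L}$, I would rely on the weak stabilisation of $B(L)$ with respect to $x$. By an Abelian-type property for weak stabilisations, analogous to the one invoked in Section~\ref{sectionDiaconis}, the procedure ends either (i) in a configuration stable in $B(L)$, or (ii) in a configuration stable on $B(L) \setminus I_x$ with some $z \in I_x$ holding exactly two particles and the remaining $I_x$ sites empty. In case (i), the weak-stabilisation odometer coincides with $m_{B(L),\eta}$, so the assumption $m_{B(L),\eta}(x) \geq 1$ implies that $x$ was toppled during the weak stabilisation; at that step $x$ held at least two particles, so $I_x$ was non-empty at that step. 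Lemma~\ref{lemma:weaklyproperty} then forces $I_x$ to be non-empty at the end of the weak stabilisation, which in case (i) coincides with the end of the full stabilisation, giving $A_{x,L}$. In case (ii), $I_x$ already holds two particles at $z$ at the end of the weak stabilisation, and one must continue toppling to reach a stable configuration; the role of $E$ is to control the instructions consumed in this continuation at $x$, $y$, and $y'$, so that the particles displaced by the further topplings of $z$ and its neighbors are channelled back into $I_x$.

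The main obstacle I anticipate is the case~(ii) analysis. A priori the weak stabilisation may already have toppled some vertex in $I_x$ several times, so that the ``next'' instructions consumed when $z$ is further toppled are not $\tau^{x,1}, \tau^{x,2}, \tau^{y,1}, \tau^{y',1}$ but later ones, which are not controlled by $E$. Overcoming this will require exploiting the Abelian property to reorder topplings so that the weak stabilisation postpones all topplings inside $I_x$ until $B(L) \setminus I_x$ has been stabilised; with this order, the first instructions at $x$ and at its distinguished neighbors $y$ and $y'$ are precisely the ones consumed in the continuation from case~(ii), so that $E$ indeed forces at least one particle to remain in $I_x$ at the end of the full stabilisation.
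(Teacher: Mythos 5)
Your proposed containment $E \cap \{m_{B(L),\eta}(x) \geq 1\} \subseteq A_{x,L}$ is false, and the reordering idea does not repair it. Here is a concrete counterexample. Let $y, y'$ be the two distinguished neighbours of $x$ on the event $E$, and take the initial configuration $\eta(x)=2$, $\eta(y)=\eta(y')=1$, all other sites empty. Then $m_{B(L),\eta}(x)\geq 1$ because $x$ is unstable from the start. On $E$, the first toppling of $x$ sends one particle to $y$ and one to $y'$, making both unstable. Toppling $y$ consumes $\tau^{y,1}=\tau_{yx}$ (one particle back to $x$) and $\tau^{y,2}$, which $E$ leaves uncontrolled; similarly for $y'$. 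If $\tau^{y,2}$ and $\tau^{y',2}$ both send a particle outside $I_x$, then $x$ again holds two particles and must be toppled a second time, using $\tau^{x,3},\tau^{x,4}$, again uncontrolled. Choosing all these unfixed instructions to drive particles away from $I_x$ and ultimately out of $B(L)$ produces a final configuration with $I_x$ empty, so $A_{x,L}$ fails while $E$ and $\{m\geq 1\}$ both hold. Thus $E$ does not have the property you need; the problem is that $E$ constrains only four instructions at fixed indices, and the dynamics freely consumes the uncontrolled ones ($\tau^{y,2}$, $\tau^{y',2}$, $\tau^{x,3}$, $\tau^{x,4}$, ...) in ways that expel particles from $I_x$.

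The proposed fix also does not work. By the Abelian property, the odometer of the weak stabilisation at the sites of $I_x$ is intrinsic and cannot be reduced by reordering; and in general the weak stabilisation \emph{must} topple inside $I_x$ (e.g.\ when three or more particles accumulate there), so you cannot postpone those topplings beyond the weakly stable configuration. Even if the weak-stabilisation odometer did vanish on $I_x$, your continuation would still consume $\tau^{y,2}$ and $\tau^{y',2}$, which $E$ does not control and which can leak particles out of $I_x$, retriggering topplings outside $I_x$ whose return into $I_x$ is uncontrolled. The paper's proof handles case~(ii) differently: it conditions on the weak stabilisation having produced the configuration $\eta^w$ (which fixes the current odometer), and then uses the \emph{fresh} instructions at the current odometer of $z$ and of $x$. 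The controlled event there requires that \emph{both} particles leaving $z$ go to $x$ (so nothing escapes $I_x$ at that step), and then that the two particles at $x$ go to \emph{distinct} neighbours (so $I_x$ immediately becomes stable with two occupied sites and the stabilisation terminates without any further cascade). That pair of constraints, holding with conditional probability $(d-1)/d^3$, is what guarantees no leakage; your event $E$ leaves the outgoing halves of the topplings at $y$ and $y'$ free, which is exactly where the argument breaks.
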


The strategy for the proof of Proposition \ref{propweak} is to
stabilise $B(L)$ by first weakly stabilising $(x, B(L))$, which gives
a weakly stable configuration $\eta^w$ for $(x, B(L))$. Then either
$I_x$ was never visited during the weak stabilisation, or~$I_x$ was
visited and $\eta^w$ is stable in $B(L)$, in which case Lemma
\ref{lemma:weaklyproperty} ensures that we finish the stabilisation procedure with one particle in $I_x$, or there exists $z \in I_x$ such that $\eta^w(z) = 2$,
all the other vertices of $I_x$ are empty and $\eta^w$ is stable in $V^\prime \setminus I_x$.  In the latter case, with some positive probability, we stabilise $B(L)$ in at most two steps in such a way that some particle remains in $I_x$.

\begin{proof}[Proof of Proposition \ref{propweak}]
Let $\eta$ be the initial particle configuration. We stabilise $\eta$
in $B(L)$ by first performing a weak stabilisation of $(x, B(L))$,
thus obtaining a weakly stable configuration $\eta^w$, and then, if
$\eta^w$ is not stable in $B(L)$, by proceeding with the
stabilisation.
%Recall that we then obtain the configuration $\eta_{B(L),\infty}$,
%which is stable in $B(L)$.

Let $Q_{x, L}$ be the event that at least one particle has been in  $I_x$  during the weak stabilisation of $(x, B(L))$, i.e., 
there exists $\ell \in \{0, 1, \ldots, k \}$ such that $\eta_{\alpha^{(\ell)}} (z) \geq 1$ for some $z \in I_x$, where $\alpha =(x_1, \ldots, x_k)$ is a sequence which stabilises the initial configuration 
weakly in $(x,B(L))$.
We observe that 
\begin{equation}
\mathcal{P}^\mu(A_{x, L})\geq \mathcal{P}^\mu(A_{x, L} \cap Q_{x, L} \cap \{\eta^w \text{ is stable in }  B(L) \})+\mathcal{P}^\mu(A_{x,L} \cap \{\eta^w \text{ is not stable in } B(L) \}). \label{eq2}
\end{equation}
Consider the second term on the right-hand side of \eqref{eq2}. If
$\eta^w$ is not stable in $B(L)$, by definition of a weakly stable configuration  this implies that there exists a site $z \in I_x$ such that $\eta^w(z)=2$ and $\eta^w(y)=0$ for all $y \in I_x\setminus \{z\}$. If $z \in I_x \setminus \{x\}$, it then happens with probability  
$
\frac{d - 1}{d^3}
$
that such two particles both jump to $x$ in one step and, at the next step, they jump to two distinct neighbours of $x$, thus giving a stable configuration in $B(L)$ with two particles in $I_x$. Instead, if $z = x$, it happens with probability $\frac{d - 1}{d}$ that such two particles jump to two distinct neighbours of $x$, again giving a stable configuration in $B(L)$ with two particles in $I_x$. Hence,
\[
   \mathcal{P}^\mu(A_{x,L} \cap \{\eta^w \text{ is not stable in } B(L)\})
    \notag \geq \frac{d-1}{d^3}\mathcal{P}^\mu( \eta^w \text{ is not stable in } B(L)).
\]

We consider now the first term on the right-hand side of \eqref{eq2}. By Lemma \ref{lemma:weaklyproperty} we have that
\begin{equation*}
    Q_{x,L} \cap \{\eta^w \text{ is stable in } B(L) \} \subset A_{x,L}.
\end{equation*}
Hence,
\begin{align*}
    \mathcal{P}^\mu(A_{x, L } \cap Q_{x, L } \cap \{\eta^w \text{ is
stable in } B(L)\})&=\mathcal{P}^\mu(Q_{x, L } \cap \{\eta^w \text{ is stable in } B(L) \})\notag\\&\geq 1-\mathcal{P}^\mu(Q_{x, L}^c)-\mathcal{P}^\mu(\eta^w \text{ is not stable in } B(L)).
\end{align*}
By replacing the previous expression in \eqref{eq2}, we then obtain that
\begin{align*}
    \mathcal{P}^\mu(A_{x, L })&\geq 1-\mathcal{P}^\mu(Q_{x, L}^c)-\mathcal{P}^\mu(\eta^w \text{ is not stable in } B(L))+\frac{d-1}{d^3}\mathcal{P}^\mu\left(\eta^w \text{ is not stable in } B(L) \right)\\&\geq 1-\mathcal{P}^\mu(Q_{x, L}^c)-\left(1-\frac{d-1}{d^3}\right)\mathcal{P}^\mu\left(\eta^w \text{ is not stable in } B(L) \right)\\&\geq 1-\mathcal{P}^\mu(Q_{x, L}^c)-\left(1-\frac{d-1}{d^3}\right)=\frac{d-1}{d^3}-\mathcal{P}^\mu(Q_{x, L}^c).
\end{align*}
The proof is then concluded by observing that the event $Q_{x, L}^c$ implies that $m_{ B(L), \eta}(x)=0$.
\end{proof}

The next corollary is central in the proof of Theorem
\ref{thm:theorem1}. The corollary implies that, if we assume that the
stochastic sandpile model is active, then the probability that any
ball $I_x$ which is far enough from the boundary of $B(L)$ hosts at
least one particle after stabilisation is bounded from below.

\begin{corollary} \label{corrweak}
Let $G = (V,E)$ be an infinite vertex-transitive graph with degree
$d$, and suppose that the SSM in $G$ is active. Then, for any
$\varepsilon >0$, for $r$ large enough, for any $L \in \mathbb{N}$ and
$x \in B(L)$ with $B(x, r) \subset B(L)$,
\begin{equation}\label{eq:corollaryweak}
    \mathcal{P}^\mu(A_{x, L})\geq \frac{d-1}{d^3}- \varepsilon. 
\end{equation}
\end{corollary}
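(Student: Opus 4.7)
The plan is to combine Proposition \ref{propweak} with the $0$--$1$ law of Lemma \ref{0-1law} via a monotonicity argument on the odometer. From Proposition \ref{propweak} it suffices to show that, under the assumption that the SSM is active, the quantity $\mathcal{P}^\mu(m_{B(L),\eta}(x)=0)$ can be made smaller than any prescribed $\varepsilon>0$ as soon as $B(x,r)\subset B(L)$ with $r$ large, \emph{uniformly} in $L$.

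The first step is a monotonicity property of the odometer with respect to the stabilisation set: if $V'\subset V''$, then any half-legal sequence contained in $V'$ is also half-legal when viewed as a sequence in $V''$, so taking suprema gives $m_{V',\eta}(x)\leq m_{V'',\eta}(x)$ pointwise. Applied to $B(x,r)\subset B(L)$, this yields the inclusion of events
\[
\{m_{B(L),\eta}(x)=0\}\ \subset\ \{m_{B(x,r),\eta}(x)=0\},
\]
and hence
\[
\mathcal{P}^\mu\bigl(m_{B(L),\eta}(x)=0\bigr)\ \leq\ \mathcal{P}^\mu\bigl(m_{B(x,r),\eta}(x)=0\bigr).
\]
By vertex-transitivity the right-hand side equals $\mathcal{P}^\mu(m_{B(r),\eta}(o)=0)$, where $o$ is the root, so the problem no longer depends on $L$.

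The second step uses activity. Since the odometer is monotone in the stabilisation set, $m_{B(r),\eta}(o)$ is non-decreasing in $r$ and converges as $r\to\infty$ to $m_\eta(o)$. By Lemma \ref{0-1law}, the assumption that the SSM is active gives $\mathcal{P}^\mu(m_\eta(o)<\infty)=0$, and in particular $\mathcal{P}^\mu(m_\eta(o)\geq 1)=1$. Monotone convergence of the events $\{m_{B(r),\eta}(o)\geq 1\}$ to $\{m_\eta(o)\geq 1\}$ then yields
\[
\lim_{r\to\infty}\mathcal{P}^\mu\bigl(m_{B(r),\eta}(o)=0\bigr)\ =\ 0.
\]
Combining this with the previous step, we can choose $r$ large enough that $\mathcal{P}^\mu(m_{B(L),\eta}(x)=0)\leq\varepsilon$ whenever $B(x,r)\subset B(L)$, and plugging this into Proposition \ref{propweak} delivers \eqref{eq:corollaryweak}.

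The argument is essentially a one-line combination of the three ingredients (the weak-stabilisation lower bound, the Abelian monotonicity, and the $0$--$1$ law), so there is no serious obstacle. The only point to be careful about is the monotonicity in $V'$, which relies on the Abelian property for half-stabilisation mentioned in Section \ref{sectionDiaconis}; once this is observed, vertex-transitivity turns a bound depending on both $x$ and $L$ into one depending only on the radius~$r$, which is what makes the estimate uniform.
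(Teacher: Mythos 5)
Your argument is correct and follows essentially the same route as the paper: invoke the monotonicity of the odometer with respect to the stabilisation set (Lemma~3 in~\cite{RS}) together with vertex-transitivity to reduce the bound to one depending only on~$r$, then use activity via Lemma~\ref{0-1law} and monotone convergence to make $\mathcal{P}^\mu(m_{B(r),\eta}(o)=0)$ small, and conclude by Proposition~\ref{propweak}. You merely spell out the intermediate steps (the pointwise odometer inequality, the identification $m_\eta(o)=\sup_r m_{B(r),\eta}(o)$, and the set convergence) that the paper compresses into a single sentence.
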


\begin{proof}
Since the system is active and the graph is vertex-transitive, Lemma
\ref{0-1law} implies that we can find $r=r(\varepsilon)>0$ such that
for any $x \in V$,
    \begin{equation*}
        \mathcal{P}^\mu(m_{ B(x,r),\eta}(x)=0)\leq \varepsilon.
    \end{equation*}
By monotonicity \cite[Lemma 3]{RS}, we have that for every~$x\in V$
such that~$B(x,r)\subset B(L)$,
\begin{align*}
\mathcal{P}^\mu(m_{ B(L),\eta}(x)=0) \leq    \mathcal{P}^\mu(m_{
B(x,r),\eta}(x)=0) \leq  \varepsilon.
\end{align*}
This concludes the proof, using Proposition~\ref{propweak}.
\end{proof}

\subsection{Concluding the proof of Theorem \ref{thm:theorem1}}
Let us briefly present the idea of the proof of Theorem \ref{thm:theorem1} for SSM in $\mathbb{Z}^d$. Corollary \ref{corrweak} implies that, if the system is active, then the density of particles on large balls {after stabilisation} is at least a positive constant $c$, which depends only on $d$ and does not depend on $\mu$
(this argument fails on non-amenable graphs, since
(\ref{eq:corollaryweak}) does not hold at vertices which are ``too
close'e to the boundary of the ball, and such graphs have too many vertices on the boundary). 
Since the density of particles cannot increase during stabilisation,
this implies that~$\mu\geq c$, and this being true for every~$\mu$
such that SSM is active, we deduce that~$\mu_c\geq c$.

The proof on general vertex-transitive graphs (which are not
necessarily amenable) requires an additional tool, namely the method
of ghost particles. We will still look for a lower bound on the
density of particles after stabilisation, using Corollary
\ref{corrweak}, to obtain \textit{in fine} a lower bound on~$\mu$ such
that SSM stays active.

We let $X(t)$ denote a simple random walk in $G$, and $P_x$ denote its law when $X(0) = x \in V$. We let $E_x$ denote the corresponding expectation. Given a set $Z \subset V$, we define $\tau_Z := \inf\{t \geq 0 : X(t) \in Z\}$.  For any $x,y \in V$, we define the Green’s function,
\begin{equation*}
     G_Z(x,y):=E_x\Bigg[ \sum_{t=0}^{\tau_{Z^c}-1}\mathbbm{1}_{\{X(t)=y\}}\bigg].
 \end{equation*}
We state an auxiliary result from \cite{CST} which will be used in the proof.

\begin{lemma}[Lemma 4.1 in  \cite{CST}]\label{lemma4.1CST}
   Given an infinite vertex-transitive graph $G=(V,E)$, for any $r \in
\N$ there exists $L_0=L_0(r)$ such that, for any $L\geq L_0$,
     \begin{equation*}
         \sum_{x \in B(L)}G_{B(L)}(x,o)<10 \sum_{\substack{x \in B(L):
\\B(x,r)\subset B(L)}}G_{B(L)}(x,o).
     \end{equation*}
\end{lemma}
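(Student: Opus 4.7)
The plan is to reduce the inequality to a sojourn-time comparison. First, note that $B(L-r) \subset \{x \in B(L) : B(x, r) \subset B(L)\}$, so it suffices to prove
\[
\sum_{x \in B(L)} G_{B(L)}(x, o) < 10 \sum_{x \in B(L-r)} G_{B(L)}(x, o).
\]
Since $G$ is vertex-transitive it is regular, so simple random walk is reversible with respect to the uniform measure and $G_{B(L)}(x, o) = G_{B(L)}(o, x)$. The left-hand side is then $E_o[\tau_{B(L)^c}]$, the expected exit time of $B(L)$ starting from $o$, while the right-hand side is $10$ times the expected time spent in $B(L-r)$ before exit. The lemma thus reduces to: the expected time spent in the shell $S := B(L) \setminus B(L-r)$ is at most $9$ times the expected time spent in $B(L-r)$, for $L \geq L_0(r)$.

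\textbf{Controlling the shell by excursions.} I would decompose the trajectory into excursions alternating between $B(L-r)$ and $S$. From any $z \in S$, following a geodesic toward $o$ reaches $B(L-r)$ in at most $r$ steps, and this deterministic sequence of jumps is followed with probability at least $d^{-r}$; iterating this observation gives
\[
\sup_{z \in S} E_z\!\left[\tau_{B(L-r) \cup B(L)^c}\right] \leq M(r) := r\, d^{r}.
\]
By the strong Markov property, the expected time spent in $S$ is at most $M(r) \cdot E_o[N]$, where $N$ counts the number of entries of the walk into $S$. Each such entry is a single step from the inner boundary $\partial^{-} B(L-r) := \{y \in B(L-r) : y \sim z \text{ for some } z \in S\}$, so
\[
E_o[N] \leq \sum_{y \in \partial^{-} B(L-r)} G_{B(L)}(o, y).
\]

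\textbf{Main obstacle: the boundary estimate.} What remains, and is the crux of the proof, is to show that for $L \geq L_0(r)$,
\[
\sum_{y \in \partial^{-} B(L-r)} G_{B(L)}(o, y) \leq \frac{9}{M(r)} \sum_{y \in B(L-r)} G_{B(L)}(o, y),
\]
i.e.\ that the Green's function mass concentrated on the inner boundary of the shell is a small fraction of the total mass in $B(L-r)$. On amenable vertex-transitive graphs this is straightforward, since $|\partial^{-} B(L-r)|/|B(L-r)| \to 0$ as $L \to \infty$ and one has uniform control on $G_{B(L)}$. On non-amenable graphs, however, the sphere is comparable in size to the ball, so one must instead exploit the decay of $G_{B(L)}(o, \cdot)$ near $\partial B(L)$. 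The natural route is to combine vertex-transitivity --- through a covering of $B(L-r)$ by translates of $B(r)$ --- with a Harnack-type comparison of Green's-function values at nearby vertices, available on vertex-transitive graphs via heat-kernel estimates, in order to dominate each boundary term $G_{B(L)}(o, y)$ by the average of $G_{B(L)}(o, \cdot)$ over a small ball strictly inside $B(L-r)$. This uniform control of the inner-boundary contribution is the technical heart of the argument; once it is established, combining with the excursion bound above yields $T \leq 10 \, T'$ as desired.
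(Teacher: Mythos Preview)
The paper does not prove this lemma: it is quoted verbatim from~\cite{CST} as an auxiliary result and used as a black box in the proof of Theorem~\ref{thm:theorem1}. So there is no argument in the paper to compare your proposal against.

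As for your proposal itself, it is not a proof but a sketch with an acknowledged gap. Your reduction via reversibility to a sojourn-time inequality is correct, and the excursion bound $\sup_{z\in S}E_z[\tau_{B(L-r)\cup B(L)^c}]\leq r d^r$ is fine. But you explicitly leave open the ``boundary estimate''
\[
\sum_{y\in\partial^- B(L-r)} G_{B(L)}(o,y)\ \leq\ \frac{9}{M(r)}\sum_{y\in B(L-r)} G_{B(L)}(o,y),
\]
calling it ``the technical heart of the argument'' without supplying it. That is the entire content of the lemma in the non-amenable case, so what you have written is a reformulation rather than a proof.

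Moreover, the route you suggest for closing the gap---a Harnack-type comparison of $G_{B(L)}(o,\cdot)$ at nearby vertices---is problematic as stated. Harnack inequalities for the killed Green's function break down near the killing boundary, which is exactly where $\partial^- B(L-r)$ sits (distance~$r$ from~$\partial B(L)$). You would need a boundary Harnack principle or some substitute adapted to the geometry, and vertex-transitivity alone does not hand you one. If you want to complete this line of argument, you should look at what~\cite{CST} actually does; the proof there is where the real work happens.
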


\begin{proof}[Proof of Theorem \ref{thm:theorem1}]
Let~$\mu>0$ such that the SSM is active.
We will show that this implies a lower bound on~$\mu$.

Let~$L\in\N$.
We perform a stabilisation of the set $B(L)$. Once the stabilisation
of $B(L)$ is complete, we let each stable vertex in $B(L)$ with
precisely one particle perform a \textit{ghost} simple independent
random walk starting from that vertex and being killed upon leaving
$B(L)$. Let $\bar m_L(o)$ be the number of particles or ghosts jumping
from the root vertex $o\in V$, let $w_L(o)$ be the number of ghosts jumping
from $o$ and recall that $m_{B(L), \eta}(o)$ denotes the odometer, namely
the number of particles jumping from $o$ during the stabilisation of
$B(L)$. We then have that
\begin{equation}\label{eq:ghostrelation}
    \bar m_L(o)=m_{B(L), \eta}(o)+w_L(o)
\geq w_L(o).
\end{equation}
Denote by $\bar \cP^\mu$ the probability measure in the enlarged
probability space of particles and ghosts, by $\bar E^\mu$ the
expectation with respect to $\bar \cP^\mu$, and recall that
$\eta_{B(L),\infty}$ denotes the particle configuration after the stabilisation of $B(L)$.

It follows from the definition of ghosts that
\begin{align}
\label{eq:visits1}
    \bar E^\mu[\bar m_L(o)] & = \mu \sum_{x \in B(L)}G_{B(L)}(x,o), \\
    \label{eq:visits2}
    \bar E^\mu [w_L(o)]
    &  =  \sum_{x \in B(L)}\bar \cP^\mu\left( \eta_{B(L),\infty}(x) =
1  \right) G_{B(L)}(x,o).
\end{align}

The first identity holds since, in order to sample $\bar m_L(o)$, we
need to start an independent simple random walk trajectory killed upon
leaving $B(L)$ for each initial particle in $B(L)$ and count the
number of visits at $o$. The second identity holds since a ghost starts from a vertex $x$ if and only if a particle is at $x$ after the stabilisation of $B(L)$.

Let~$\varepsilon>0$.
We define the set $K_L := \{  x \in B(L) \, : \, B(x, r) \subset B(L)
\}$,
choosing the constant $r$ large enough so that, by Corollary \ref{corrweak},
 $\mathcal{P}^\mu( A_{x, L}  ) \geq  \frac{d-1}{d^3}  - \varepsilon$
for every~$L\in\N$ and $x \in K_L$.
 Note that we can identify a set of vertices $U \subset K_L$ such that
for each $x, y \in U$, if~$x\neq y$ then~$I_x\cap I_y=\varnothing$,
and 
\begin{equation}\label{eq:Uproperty}
\forall \,x \in K_L\quad
\exists \,y \in U\qquad
d_G(x,y) \leq 2
\end{equation}
(indeed, if there was $x \in K_L$ such that $d_G(x,y) > 2$ for each $y
\in U$, we could simply take $x$ in $U$). Moreover, we observe that
from the Markov property, for $x, y \in V$ such that $d_G(x,y) = k$ we have that
\begin{equation}\label{eq:confronto}
   \frac{1}{d^k}  \,  G_{B(L)}(y, o)  \,   \,  \leq \, G_{B(L)}(x, o)
\,  \leq  \,  \,  d^{k }\, G_{B(L)}(y, o).
\end{equation}
From (\ref{eq:Uproperty}) and (\ref{eq:confronto}) we deduce the crude bound
\begin{align*}
 \sum_{x\in K_L }   \, G_{B(L)}(x,o)  
&\leq 
 \sum_{y \in U }   \sum\limits_{ \substack{x \in K_L  : \\ d(x,y) \leq
2}   } \, G_{B(L)}(x,o)  \\
&\leq   \sum_{y \in U }   \sum\limits_{ \substack{x \in K_L  : \\ d(x,y) \leq 2}   } \, 
 d^2  G_{B(L)}(y,o)  \leq 
 d^2(d^2+1)  \sum_{y \in U }  
 G_{B(L)}(y,o).
\numberthis\label{eq:comparisongreessums}
\end{align*}
Finally, note that, by the Markov property, for any $y \in U$ we have that 
\begin{equation}\label{eq:inequalityweak}
\sum\limits_{x \in I_y}  
\bar \cP^\mu\left( \eta_\infty(x) = 1  \right) G_{B(L)}(x,o)
\, \geq  \, 
\frac{1}{d} 
\bar \cP^\mu\left(  A_{y,  L}  \right) G_{B(L)}(y,o),
\end{equation}
where we used that the expected number of visits at $o$ from ghosts
starting in $I_y$ is at least the expected number of visits from one
of the ghosts starting from $I_y$ assuming that, if such ghost starts
from $x  \in I_y$ with $x \neq y$, then in the first step it jumps to
$y$.

Thus,  we have that, for any large enough~$L$,                          
                        
\begin{align*}
 \bar E^\mu [\bar m_L(o)]
\geq
 \bar E^\mu [w_L(o)]
&\geq  \sum_{y \in U}\sum_{x\in I_y}\bar \cP^\mu\left( \eta_{B(L),\infty}(x) =1  \right) G_{B(L)}(x,o)\\
    &  \geq   \frac{1}{d}  \sum_{y \in U}\bar \cP^\mu\left( A_{y, L}  \right) \,  G_{B(L)}(y,o) \\
   & \geq  \frac{1}{d}  \left( \frac{d-1}{d^3}  - \varepsilon \right)  \sum_{y \in U}     \, G_{B(L)}(y,o) \\
      & \geq  \frac{1}{d^3(d^2+1)}  \left( \frac{d-1}{d^3}  -
\varepsilon \right) \sum_{x \in K_L}   \, G_{B(L)}(x,o) \\
          & \geq \frac{1}{10d^3(d^2+1)}  \left(\frac{d-1}{d^3}  -
\varepsilon \right)   \sum_{x \in B(L)}  \, G_{B(L)}(x,o)
\end{align*}
where the first inequality follows from \eqref{eq:ghostrelation}, 
 for the second inequality we use \eqref{eq:visits2}, for the
third inequality we use \eqref{eq:inequalityweak}, for the fourth
inequality we use that~$r$ was given by Corollary \ref{corrweak}, for
the fifth inequality we use
(\ref{eq:comparisongreessums})
and for the last inequality we use Lemma \ref{lemma4.1CST}.
Then, using \eqref{eq:visits1} we deduce that
\[
 \bar E^\mu [\bar m_L(o)]
\geq
\frac{1}{10d^3(d^2+1)}  \left(\frac{d-1}{d^3}  - \varepsilon \right)
\times\frac 1\mu \bar E^\mu [\bar m_L(o)],
\]
which, given that~$\bar E^\mu [\bar m_L(o)]\neq 0$, implies that
\[
\mu
\geq
\frac{1}{10d^3(d^2+1)}  \left(\frac{d-1}{d^3}  - \varepsilon \right).
\]
This being true for every~$\varepsilon>0$ and for every~$\mu>0$ such
that the SSM is active, we obtain that
\begin{equation}
\label{lowerBoundMuc}
\mu_c
\geq
\frac{d-1}{10d^6(d^2+1)},
\end{equation}
which concludes the proof that~$\mu_c>0$.
\end{proof}

\section{Stochastic comparison}
In this section we introduce the 
activated random walks with instantaneous deactivation (ARWD)
and we present one of our main results, namely 
the stochastic comparison between SSM and ARWD.

\subsection{Definition of the ARWD process}
\label{sec:def-ARWD}

Our strategy to obtain the upper bound is based on a comparison with
another model, which we call activated random walks with instantaneous deactivation (ARWD).
As for activated random walks, a configuration of the ARWD model on~$G$ can be represented by a vector~$\eta : V \to \N_{\s}$,
with~$\N_{\s} = \N_0 \cup \{\s\}$, where~$\eta(x) = k \in \N_0$ means that there are~$k$ active particles on the site~$x$, while~$\eta(x) = \s$ means that
there is one sleeping particle on the site~$x$.
A site~$x \in V$ is said to be stable in the configuration~$\eta$ if~$\eta(x) \in \{0, \s\}$, otherwise it is called unstable. We say that a configuration~$\eta$ is stable in a subset~$U \subset V$ if~$\eta(x) \in\{0, s\}$ for all~$x\in U$.

For~$\eta\in\N_\s^V$, we denote by~$|\eta|=\sum_{x\in V}|\eta(x)|$ the
total number of particles in~$\eta$ (using the convention
that~$|\s|=1$).
We equip the set~$\N_\s$ with the
order~$0<\s<1<2<\cdots$ and, for~$\eta,\,\eta'\in\N_s^V$, we
write~$\eta\leq\eta'$ if~$\eta(x)\leq\eta'(x)$ for every~$x\in V$.

Given~$\eta \in \N_{\s}^V$,~$U\subset V$, we denote by~$\eta^U$ and~$\eta_U$ the configurations respectively obtained by waking up the particles in~$U$ or making them fall asleep, that is to say, for every~$x\in V$,
\begin{equation}
\label{notation-eta-U}
    \eta^U(x)=\begin{cases}
         1
         &\text{if~$x \in U$ and~$\eta(x)=\s$}\,,\\
     \eta(x)
     &\text{otherwise}
    \end{cases}
    \qquad\text{and}\qquad
    \eta_U(x)=\begin{cases}
         \s
         &\text{if~$x \in U$ and~$\eta(x)=1$}\,,\\
     \eta(x)
     &\text{otherwise.}
    \end{cases}
\end{equation}
We now present the ARWD process, first informally, before giving the
precise definition below.
This process is associated with a certain fixed subset~$A\subset V$,
and is a variant of the loop representation of ARW introduced in~\cite{AFG}.
At each time step of this process, an unstable site~$x\in V$ is
chosen, with a
certain deterministic rule which depends on the past history of the
process.
This rule is given by what we call a toppling strategy:

\begin{definition} \label{def-topp-strategy}
   A toppling strategy is an application~$f:\cup_{t\geq 0}(\N_s^V)^{t+1}\to V \cup \{\varnothing\}$ such that for every~$t\geq 0$ and every configurations~$\eta_0,\,\dots,\,\eta_t$, if~$f( \eta_0,\,\dots,\,\eta_t) \neq \varnothing$,  the site~$f( \eta_0,\,\dots,\,\eta_t)$ is unstable in~$\eta_t$.
\end{definition}

Note that we will indeed use a strategy that depends not only on the current configuration but on the whole history of the process (see Section~\ref{section-def-f-g} for the precise definition of the toppling strategy used during the so-called ping-pong rally).

Then, if the chosen unstable site~$x$ belongs to~$A$ and contains one
single particle, this particle falls asleep with
probability~$\lambda/(1+\lambda)$.
Otherwise, if it does not fall asleep, one particle at~$x$ makes a
random walk on~$V$ until it finds an empty site of~$A$ and is left
there, sleeping.
This is a key difference with respect to the process considered
in~\cite{AFG}, where the particle was left active after its walk.
Besides, along this walk made by the particle, the sleeping particles
met are waken up.

We now turn to the formal definition of the ARWD process.

\begin{definition}
\label{def-ARWD}
Let~$G=(V,\,E)$ be a finite connected graph, and let~$A\subset V$ be
fixed.
We consider the state space~$\cS_A=\{\eta\in\N_\s^V\,:\,|\eta|=|A|\}$.
For a given parameter~$\lambda\geq 0$, a toppling strategy~$f$ %,a sleep mask~$g$
and an initial configuration~$\eta_0\in\cS_A$, we call~$\mathrm{ARWD}(\lambda,\,A,\,f,\,\eta_0)$ the process~$(\eta_t)_{t\geq 0}$ with state space~$\cS_A$ and deterministic initial configuration~$\eta_0$, which is such that for every~$t\geq 0$ and every~$\eta'\in\cS_A$, we have
\[
\bbP\big(\eta_{t+1}=\eta'\ \big|\ \eta_0,\,\dots,\,\eta_t \big)
=Q_A\big(\eta_t,\,f(\eta_0,\,\dots,\,\eta_t),\,\eta'\big),
\]
where~$Q_A:\cS_A\times\big(V\cup\{\varnothing\}\big)\times\cS_A\to[0,1]$ is defined as follows:
\begin{itemize}
\item For any $\eta,\,\eta' \in \cS_A$, $Q_A(\eta, \,\varnothing,\,\eta')=\mathbbm{1}_{\eta'=\eta}$.
   % \item The stable configurations are absorbing, i.e., for every~$\eta\in\cS\cap\{0,\s\}^V$ and every~$x\in V$ %and~$W\subset V$ we have~$Q(\eta,\,x,\,\eta)=1$.

    \item Let~$\eta\in\cS_A$ be an unstable configuration and let~$x\in V$ be an unstable site of~$\eta$.
    Let us define
    \[
    p\ =\ 
    \begin{cases}
    \frac\lambda{1+\lambda}
    &\text{ if }x\in A\text{ and }\eta(x)=1\,,\\
    0
    &\text{ otherwise.}
    \end{cases}
    \]
    Then, we have~$Q_A(\eta,\,x,\,\eta')=p\,Q_A^{\text{sleep}}(\eta,\,x,\,\eta')+(1-p)Q_A^{\text{walk}}(\eta,\,x,\,\eta')$, with~$Q_A^{\text{sleep}}$ and~$Q_A^{\text{walk}}$ defined as follows.
    
    On the one hand, recalling the notation~$\eta_{\{x\}}$ defined in~\eqref{notation-eta-U}, we have
\[
Q_A^{\text{sleep}}\big(\eta,\,x,\,\eta_{\{x\}}\big)=1
\,,
\]
that is to say, if the site~$x$ belongs to~$A$ and contains one single particle, this particle falls asleep with probability~$p=\lambda/(1+\lambda)$.

On the other hand, with probability~$1-p$ the particle at~$x$ walks
until it finds an empty site of~$A$ and is left there sleeping, having
woken up the sleeping particles met along its path. Thus, writing~$S=\{z\in V\,:\,\eta(z)=\s\}$, for every~$y\in A$
and~$R\subset S$, we have
\[
Q_A^{\text{walk}}\big(\eta,\,x,\,\eta^{R}-\mathbbm{1}_x+\s\mathbbm{1}_y)
=
P_x\big(X(\tau_B^+)=y\text{ and }\{X(t),\,0\leq t< \tau_B^+\}\cap S=R\big)
\,,
\]
where, under~$P_x$,~$(X(t))_{t\geq 0}$ is a simple random walk on~$G$
started at~$x$ and~$\tau_B^+$ is its first hitting time of the set~$B$
of the empty sites of~$A$, that is to say,
\[
B
=
\big\{z\in A\ :\ (\eta-\mathbbm{1}_x)(z)=0\big\}
\]
and~$\tau_B^+=\inf\{t\geq 1\,:\,X(t)\in B\}$, 
which is almost surely finite because we work with configurations with exactly~$|A|$ particles (and the graph is connected).
\end{itemize}
\end{definition}

Note that this model is not abelian: to our knowledge, there is no
monotonicity property of the number of steps before stabilisation with respect to the initial configuration.

\begin{definition}
\label{def-stab-time-ARWD}
For given parameters~$A,\,\lambda,\,f,\,\eta_0$, we denote by~$T(A,\,f,\,\eta_0)$ the first time that~$f$ is the empty set, that is to say,~$T(A,\,f,\,\eta_0)=\inf\{t\geq 0\,:\, f( \eta_0, \dots,\eta_t)=\varnothing\}$ where~$(\eta_t)_{t\geq 0}$ is an instance of the~$\mathrm{ARWD}(\lambda,\,A,\,f,\,\eta_0)$ process.
The dependence on~$\lambda$ is omitted to shorten the notation.
\end{definition}

When~$f(\eta_0,\,\dots,\,\eta_t)=\varnothing$ but the
configuration~$\eta_t$ is not stable, the intuitive idea is that the
strategy~$f$ ignores these unstables sites, that is to say, it considers
them as stable.
Indeed, in Section~\ref{section-def-f-g} we will introduce the notion of
sleep mask, which allows to ignore some reactivation events, and the
strategy~$f$ will be constructed by choosing particles which are really
active, i.e., whose reactivation was not ignored by the sleep mask.

\subsection{Connection between SSM and ARWD}

Our interest for ARWD lies in the comparison established by Proposition~\ref{propdom} below, between~$A$-stabilisation for the SSM and the stabilisation time for ARWD.
For a technical reason we need to restrict ourselves to a subset~$B\subset A$, and only consider the time for ARWD to stabilise this subset.

\begin{proposition}\label{propdom}
Let~$G=(V,E)$ be a finite connected graph with maximal
degree~$\Delta\geq 2$ and let~$B\subset A\subset V$ such that each
site of~$B$ has degree at least~$2$ (in~$G$) and~$B$ is totally disconnected, i.e., it does not contain any pair of neighbouring sites.
Let~$\eta\in\N_0^V$ be a SSM configuration such that~$|\eta|=|A|$ and let~$\tilde\eta\in\N_\s^V$ be the ARW configuration with one particle on each site of~$A$ defined by~$\tilde\eta=\mathbbm{1}_U+\s\mathbbm{1}_{A\setminus U}$, where~$U=\{x\in A\,:\,\eta(x)\geq 2\}$.

Let~$\lambda=\Delta^3$, and let~$f$ be whatever toppling strategy.
Then we have that \smash{$\|m^A_{\eta}\|$} (the number of
half-topplings to~$A$-stabilise the SSM configuration~$\eta$, 
see Definition~\ref{def-A-stab})
stochastically dominates~$T(B,\,f,\,\tilde\eta\mathbbm{1}_B)$, the
stabilisation time of the ARWD process on~$B$ starting
from~$\tilde\eta\mathbbm{1}_B$ (see Definition~\ref{def-stab-time-ARWD}).
\end{proposition}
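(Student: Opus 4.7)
The plan is to construct an explicit coupling between the SSM $A$-stabilisation of $\eta$ (performed through a sequence of half-topplings driven by $\tau$) and the ARWD process $\mathrm{ARWD}(\lambda,B,f,\tilde\eta\mathbbm{1}_B)$, so that on every realisation the SSM half-toppling count $\|m^A_\eta\|$ dominates the ARWD stabilisation time $T(B,f,\tilde\eta\mathbbm{1}_B)$ pointwise. Both processes would be driven by the same instruction array $\tau$ (these dictate the SSM jumps and the random walks inside ARWD), together with shared Bernoulli$(\lambda/(1+\lambda))$ variables governing the ARWD sleep events.

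The central bookkeeping device is a \emph{sleep mask} $M\subset V$ which translates the current SSM configuration $(\eta,h)$ into an ARWD configuration: a site $x$ with $\eta(x)=1$ and $h(x)\in\N_0$ projects to a sleeping ARWD particle if $x\in M$ and to an empty site otherwise, while sites with $\eta(x)\geq 2$ or with $\eta(x)=1$ and $h(x)\notin\N_0$ project to active particles. Initialising $M_0=(A\setminus U)\cap B$ makes the projection of the initial SSM configuration match $\tilde\eta\mathbbm{1}_B$; the key inductive invariant to maintain is that, after every coupled step, this projection coincides with the current ARWD state.

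I would then simulate each ARWD step by a short sequence of SSM half-topplings. When $f$ picks an unstable site $x$ in ARWD and the shared Bernoulli triggers a sleep event, the SSM performs the half-topplings needed to make $x$ become $A$-stable with one particle remaining, and $x$ is added to $M$. When no sleep event is triggered, the ARWD particle performs a random walk to the first empty site of $B$ (waking up any sleeping particle on the way), and the SSM traces the same trajectory using $\tau$, updating $M$ by removing the woken-up sites from the mask and adding the landing site. In every case, at least one SSM half-toppling is performed per ARWD step, so the inequality $\|m^A_\eta\|\geq T(B,f,\tilde\eta\mathbbm{1}_B)$ holds on every coupled realisation. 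Here it is also important that $A$-stabilisation never deactivates particles outside $A$, which is what synchronises the stopping condition of the SSM with the ARWD run on $B\subset A$.

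The main obstacle is to verify the invariant at each induction step and, in particular, to match the ARWD sleep probability $\lambda/(1+\lambda)$ from below by the probability of the corresponding event in SSM. This is where the choice $\lambda=\Delta^3$ and the geometric assumptions on $B$ enter: since $B$ is totally disconnected and each site of $B$ has degree at least $2$, any half-toppling out of a site of $B$ lands in $V\setminus B$ and returns to the origin with probability at least $1/\Delta$, so at most three half-topplings suffice to produce the $A$-stable configuration that realises a single ARWD sleep event in SSM. The delicate part of the argument lies in checking that the sleep-mask update rules remain consistent with both dynamics simultaneously, especially when the ARWD walker traverses sites whose active-versus-sleeping status depends on the SSM's half-toppling history; the total disconnectedness of $B$ and the minimum-degree hypothesis are precisely what rule out the pathologies that would otherwise break this bookkeeping.
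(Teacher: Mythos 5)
The coupling as described cannot work because the direction is reversed. You posit that both processes are driven by the same instruction array~$\tau$ together with \emph{independent} Bernoulli$(\lambda/(1+\lambda))$ variables governing the ARWD sleep events, and then state that ``when the shared Bernoulli triggers a sleep event, the SSM performs the half-topplings needed to make $x$ become $A$-stable.'' But the SSM half-toppling trajectory is entirely determined by~$\tau$: one cannot ``make'' it execute a prescribed excursion. The return-trip event you invoke (a short loop out of $x$ and back) has conditional probability only of order~$1/\Delta^2$ or~$1/\Delta^3$, whereas the ARWD sleep probability you want to match is $\lambda/(1+\lambda)=\Delta^3/(1+\Delta^3)$, which is close to one. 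On most realisations where the Bernoulli says ``sleep'' the SSM instruction array will not cooperate, and your invariant breaks.

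The paper runs the coupling in the opposite direction: the ARWD sleep/walk decision is \emph{constructed from} the SSM, not imposed on it. When the strategy picks a site $x\in B$ holding one particle, the coupled dynamics looks at the SSM odometer parity at~$x$: if even, a half-toppling at~$x$ is not $A$-legal, so the ARWD particle \emph{must} sleep; if odd, a residual Bernoulli is drawn so that the overall sleep probability is exactly $\lambda/(1+\lambda)$. For this residual variable to exist one needs the \emph{conditional} bound $\mathbb{P}\big(m_t(x)\text{ odd}\mid\eta_1,\dots,\eta_t\big)\geq 1/(1+\lambda)$, conditioned on the configuration history but not on~$\tau$ itself; this is Lemma~\ref{lemma-add-instructions}, proved by an injective ``add a return trip'' argument in which the hypotheses that $B$ is totally disconnected and of degree at least two play precisely the role you identified. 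So your return-trip intuition is the right ingredient, but your proposal applies it to the wrong event (producing an $A$-stable configuration rather than lower-bounding the odds of an odd odometer), omits the crucial conditioning, and never defines how the ARWD Bernoulli variables can be reconciled with the SSM randomness. It also skips the two preliminary phases of the coupled dynamics (moving all particles onto~$A$, then spreading them to one per site) that are needed so the restricted process on $B$ genuinely starts from $\tilde\eta\mathbbm{1}_B$.
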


The proof of this proposition is the object of Section~\ref{sectionARWD}.
It relies on a coupling between the ARWD process and
the~$A$-stabilisation for the SSM such that the topplings made in ARWD are~$A$-legal for the SSM.
To construct this coupling we show that, conditioned on the evolution of the SSM process, each time that we start a walk at a site~$x\in B$ which contains one single active particle, then with probability at least~$1/(1+\lambda)$ the odometer at this site is odd, which implies that this walk is~$A$-legal for the SSM.

\subsection{Bound on the stabilisation time of ARWD}
\label{sec-prop-ARWD-stab-time}

Once the problem is reduced to the study of the stabilisation time of ARWD, we show the following result:

\begin{proposition}
    \label{prop-ARWD-stab-time}
Let~$d\geq 2$.
    For every~$\lambda\geq 0,\,\mu\in(0,1)$ and~$\rho\in(0,1)$, there
exists~$\kappa>0$ such that, for every~$n\geq 1$, for
every~$A\subset\Z_n^d$ such that~$|A|\geq \mu n^d$, there exists a probability space $(\Sigma, \cF, \nu)$ and a collection of toppling strategies $(f^\mathbf{j})_{\mathbf{j}\in \Sigma}$
such that, if $J$ is a random variable with distribution $\nu$ and we use the strategy~$f^J$ associated with the value of~$J$, then, starting
with a random initial configuration~$\eta_0$ with one particle on each
site of~$A$, each particle being active with probability~$\rho$ and
sleeping with probability~$1-\rho$, independently of other particles (and independently of~$J$), we have that~$1+T(A,\,f^J,\,\eta_0)$ dominates a geometric variable with parameter~$\exp(-\kappa n^d)$.
\end{proposition}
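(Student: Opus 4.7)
The approach adapts the ping-pong rally argument of \cite{AFG} to the ARWD setting on $\mathbb{Z}_n^d$, exploiting a crucial simplification that follows from particle conservation. Since $|\eta_0|=|A|$ and each site of $A$ initially hosts exactly one particle, a direct check of Definition \ref{def-ARWD} shows that the dynamics preserve $\eta_t \in \{0,\s,1\}^V$ and keep the number of empty sites of $A$ constantly equal to zero: a direct sleep changes an active particle to a sleeper in place, while a walk removes a particle from $x$ and deposits a sleeper at the unique empty site of $A$ available during the walk, which is $x$ itself. Consequently every toppling is performed at a site $x \in A$ with $\eta_t(x)=1$, and any walk event is a simple-random-walk excursion from $x$ back to $x$. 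The role of the probability space $(\Sigma,\cF,\nu)$ is to randomize the choice of active base sites throughout the rally, providing the spatial averaging needed to bound the failure probability uniformly.

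The first step is to apply Chernoff-type concentration to the i.i.d.\ initial activations, reducing to an event of probability at least $1-e^{-cn^d}$ on which the initial number $N_0$ of active particles satisfies $N_0 \geq \rho|A|/2$ and the sleepers occupy a set $S_0 \subset A$ of size at least $(1-\rho)|A|/2$, spread roughly uniformly over the torus. Conditional on this event, I would define the strategy $f^J$ to topple, at each step, an active site selected via $J$; by the observation above, the toppling results either in a direct sleep (probability $\lambda/(1+\lambda)$, decreasing $N_t$ by one) or in an excursion of a simple random walk from the toppled site back to itself (probability $1/(1+\lambda)$), during which every sleeper visited is woken up.

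The core drift estimate is, for $d \geq 2$,
\[
E_x[m_t \mid \eta_t] \;=\; \sum_{y\in S_t}P_x(\tau_y<\tau_x^+),
\]
where $m_t$ is the number of sleepers woken up during the excursion and $\tau_x^+$ is the first return time to $x$. Standard Green's-function estimates for simple random walk on $\mathbb{Z}_n^d$ give $P_x(\tau_y<\tau_x^+) = \Theta(1)$ for $d\geq 3$ and $\Theta(1/\log n)$ for $d=2$, uniformly over $y$ at macroscopic distance from $x$. Provided $S_t$ remains spread over a macroscopic fraction of $A$, one obtains $E_x[m_t \mid \eta_t] = \Theta(|S_t|)$ (resp.\ $\Theta(|S_t|/\log n)$). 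The conditional drift of $N_t$ is then $-1 + E_x[m_t \mid \eta_t]/(1+\lambda)$, strictly positive whenever $|S_t|$ exceeds a constant depending on $\lambda$ (and on $\log n$ in $d=2$); hence $N_t$ is pushed toward an equilibrium value $N^\star$ close to $|A|$, far from zero. A Bernstein-type concentration for the walker's occupation measure upgrades this into an exponential tail bound on the fluctuations of $N_t$.

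The proof is concluded by a standard Lyapunov / exponential-Chebyshev argument showing that the hitting time of $\{N_t = 0\}$ is at least $e^{\kappa n^d}$ with probability at least $1 - e^{-\kappa n^d}$, which translates into the claimed stochastic domination of $1+T$ over $\geom(e^{-\kappa n^d})$. The main obstacle is to ensure that the drift estimate remains uniformly quantitative throughout the rally: if $S_t$ were to concentrate into a small region, $P_x(\tau_y < \tau_x^+)$ could degenerate for most choices of base site $x$, and the drift of $N_t$ could turn negative. This is precisely what the randomization $J$ is designed to control — by selecting base sites so as to cover the torus evenly, and if needed restarting the rally at fresh locations after macroscopic time intervals, one can guarantee with overwhelming probability that a positive fraction of sleepers remain in the vicinity of each chosen base site for exponentially many topplings.
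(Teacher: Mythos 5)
Your core observations — that the dynamics keep exactly one particle per site of $A$, that each toppling is either an in-place sleep or a return excursion, and that the quantity to track is the number $N_t$ of active sites — are correct and match the paper's starting point. However, the direct drift argument you propose has a genuine gap in dimension $d=2$, and your interpretation of the role of the randomization $J$ is not the one the paper needs.

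The gap is the following. To turn the drift heuristic into a bound on the hitting time of $\{N_t=0\}$, one needs a supermartingale of the form $e^{-\alpha N_t}$ (or an equivalent exponential Lyapunov function), and the supermartingale property boils down to a bound on $\bbE[e^{-\alpha\,m_t}\mid\eta_t]$ where $m_t$ is the number of sleepers woken during an excursion. That bound only sees the \emph{minimum} single-site wake-up probability $p=\inf_y P_x(\tau_y<\tau_x^+)$ over the sleepers in play, not the sum $\sum_y P_x(\tau_y<\tau_x^+)$. Running your argument globally in $d=2$, the relevant $p$ is $\Theta(1/\log n)$, which forces the exponent $\alpha$ in the supermartingale to shrink like $1/\log n$, and the final probability bound degrades to $\exp(-\kappa n^2/\log n)$ instead of the required $\exp(-\kappa n^2)$. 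This is exactly the obstruction that the dormitory hierarchy of the paper (borrowed from the ARW analysis of Asselah--Forien--Gaudillière) is built to overcome: within a level-$j$ cluster $C$ of diameter at most $D_j$, the wake-up probability is bounded below by $\Upsilon_2(D_j)\sim 1/\log D_j$, which stays bounded away from $0$ at the bottom level of the hierarchy (where $D_0$ is a constant) and degrades only logarithmically in the level. The multiscale merging of clusters, the ping-pong rally, and the induction over levels are all there to propagate the bound with the full exponent $|C|$ at each scale, eventually yielding $\exp(-\kappa n^2)$ at the top level. A single-scale drift argument as you sketch cannot reach this exponent in $d=2$. (For $d\geq 3$ a single-scale argument does work — this is precisely what the paper does, using the trivial one-level hierarchy — because $\Upsilon_d(\infty)>0$.)

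The interpretation of $J$ is also off. In the paper, $J=(J_t)$ is a sequence of geometric variables assigning a random \emph{color} to each walk; the color (through the sleep mask) restricts which sites the walk is allowed to reactivate. This restriction is what creates the independence needed between successive ping-pong rounds, and it has nothing to do with choosing the base sites or with spatial averaging. Your ``restarting the rally at fresh locations'' idea does not address the dependence issue between rounds that the colored sleep mask resolves.

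Finally, several steps are gestured at without a workable plan: ``Bernstein-type concentration for the walker's occupation measure'' is not a standard tool that gives what you need here (the wake-up events across $y$ are positively correlated and heavy-tailed); the process is non-abelian so you cannot freely condition on $S_t$ being spread out without tracking how the strategy and the sleep mask keep it so; and the exponential-moment control of $m_t$ over a full excursion (rather than its mean) is left unexamined. These are precisely the points where the paper's Lemma~\ref{lemma6.2} (supermartingale with Lemma 6.3 of~\cite{AFG}), Lemma~\ref{lemmabernoulligeom} (geometric sums of dependent Bernoullis), and Lemma~\ref{lemma3.1} (induction over the hierarchy) do the work.
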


We prove this result in Section~\ref{section-ARWD-stab-time}, adapting the strategy developed in the context of ARW by \cite{AFG}.
This strategy consists in a multiscale argument, constructing a
hierarchical structure on the set~$A$ with ``well connected'' components at the first level of the hierarchy and pairs of clusters which merge at each level of the hierarchy.
We first show that the clusters at the first level have a stabilisation time which is exponentially large in their size, before showing that this property is inherited at further levels of the hierarchy, loosing only a little in the constant in the exponential.
The choice of a random variable~$J$ and of a strategy which depends on
the value of this random variable~$J$ can be understood as considering a
strategy which is not deterministic but random.
As in~\cite{AFG}, this randomness will consist of a choice of a random
color associated to each random walk, which will affect which sites can
be reactivated by this walk (see Section~\ref{section-ARWD-stab-time}
for more details).

Then, in Section~\ref{section-exp-time}, we explain how Theorem~\ref{thm-exp-time} follows from Propositions~\ref{propdom} and~\ref{prop-ARWD-stab-time}.

\section{Connection SSM-ARWD: proof of Proposition~\ref{propdom}} \label{sectionARWD}

This section is devoted to the proof of Proposition~\ref{propdom} which relates the number of toppling for the~$A$-stabilisation of the SSM to the stabilisation time of the ARWD process.

\subsection{Strategy: sleep if the odometer is even}
\label{section-strategy-propdom}

To prove this proposition, we construct a  coupling between the ARWD process and the SSM,   up to a certain stopping time~$T$.
More precisely, we construct the ARWD process using a field of instructions~$\tau$,
which are used to determine the random walk trajectories of the particles, 
and Bernoulli random variables which are used to decide whether the 
particle falls asleep or performs a random walk.

We want to show that, following these dynamics, we obtain at each time step a toppling sequence that is~$A$-legal for the SSM. 
This requires showing that,  each time that we choose a site~$x\in A$ with only one particle, and that the particle at~$x$ makes a random walk, the odometer at this site is odd,  a necessary  condition for the toppling at~$x$ being legal.

The idea is to show that, in this case, knowing the configurations up to this time (but not the field~$\tau$ nor the odometers), the odometer at~$x$ is odd with probability at least~$1/(1+\lambda)$, with $\lambda = \Delta^3$. 
This allows us to couple the Bernoulli variable that determines whether the particle sleeps.  The coupling is such that, when
 the odometer is even, the variable ensures sleep,
 and when the odometer is odd,  there is still a probability of sleep.
 The overall probability to sleep is then~$\lambda/(1+\lambda)$,
 whatever the odometer is.
This bound on the conditional probability that the odometer is odd is the content of Lemma~\ref{lemma-add-instructions} below.

The proof of Lemma~\ref{lemma-add-instructions}  uses the fact that,  if~$x\in A$ is active and~$\eta_t(x)=1\neq \eta_0(x)$, then it entails that the site~$x$ was visited since the last time that we chose~$x$ (because upon its last arrival on~$x$ the particle fell asleep, so it must have been reactivated by the visit of another particle), or since the beginning if~$x$ was not chosen yet.
Then, once we know that the site was visited, we can bound from below the probability that the number of visits was odd, by showing that one visit can be added with a reasonable probabilistic cost.
This idea of studying the parity of the number of visits during a random walk excursion was already used in \cite{HHRR, PR}.

\subsection{Coupled ARWD dynamics}

\begin{definition}
\label{def-coupled-ARWD}
    Let~$G=(V,E)$ be a finite connected graph with maximal degree~$\Delta$, let~$B\subset A\subset V$, let~$f$ be a toppling strategy,
    let~$\eta\in\N_0^V$ be a fixed SSM configuration with~$|\eta|=|A|$ and let~$U=\{x\in A\,:\,\eta(x)\geq 2\}$.

    Then the coupled ARWD process~$(\eta_t,\,\alpha_t)_{t\geq 0}$ associated with these parameters consists of a sequence of configurations~$(\eta_t)_{t\geq 0}$ in~$\cS_A=\{\eta\in\N_s^V\,:\,|\eta|=|A|\}$ and a sequence of
half-toppling sequences~$(\alpha_t)_{t\geq 0}$ which are defined as follows.
We start taking~$\eta_0=\eta_A$, that is to say, the configuration obtained from~$\eta$ by declaring sleeping the particles which stand alone on a site of~$A$.
 Using an arbitrary order on~$V$, we define another toppling strategy~$f'$ by writing, for every~$t\geq 0$, every~$\eta_1,\,\dots,\,\eta_t\in\N_\s^V$,

   \[
     f'( \eta_0,\,\dots,\,\eta_t)
=
\begin{cases}
     \min\big\{x\in V\setminus A\,:\,\eta_t(x)\geq 1\big\}
     &\text{if $\{x\in V\setminus A\,:\,\eta_t(x)\geq 1\big\}\neq
\varnothing$;}\\ %no sleeping sites are woken up during this step

     \min\big\{x\in A\,:\,\eta_t(x)\geq 2\big\}
     &\text{else if $\big\{x\in A\,:\,\eta_t(x)\geq 2\big\}\neq
\varnothing$;}\\

     f(\eta_S \mathbbm{1}_{B},\,\dots,\,\eta_t
\mathbbm{1}_{B})
     &\text{else if $\eta_t\leq\mathbbm{1}_A$,}\\
     
     \varnothing
     &\text{otherwise,}
\end{cases}
     \]
  
  where 
    \[ \ S=\min\big\{s\leq t\,:\,\eta_s\leq\mathbbm{1}_A\big\}. \]

In other words, as long as there is not one particle on each site of~$A$, sites of $A$ that contain only one particle can't be chosen by $f'$, while once this is the case we follow the strategy~$f$.

    Let~$\lambda=\Delta^3$.
    Let~$\tau=(\tau^{x,j})_{x\in V,\,j\geq 0}$ be a random field of
toppling instructions distributed according to~$\mathcal{P}$, that is
to say, each instruction at~$x$ is a jump to a uniformly chosen
neighbour of~$x$ (as defined in Section~\ref{sectionDiaconis}).
We also consider a sequences of i.i.d.\ random variables~$(Z_t)_{t\geq 0}$, independent of $\tau$, where~$Z_0$ has uniform distribution on~$[0,1]$.

Now, we define recursively~$(\eta_t)_{t\geq 0}$ and~$(\alpha_t)_{t\geq 0}$.
For every~$t\geq 0$ we will denote by~$m_t$ the odometer
of~$\alpha_t$, i.e.,~$m_t=m_{\alpha_t}$.

Recall that~$\eta_0=\eta_A$ and we let~$\alpha_0$ be the empty toppling sequence.
Then, let~$t\geq 0$ be such that~$\eta_0,\,\dots,\,\eta_t$ and~$\alpha_t$ are constructed.

If~$\eta_t\leq \mathbbm{1}_{A\setminus B}+\s\mathbbm{1}_B$, that is to say, if each site of~$A$ already contains one particle with all the particles on~$B$ being asleep, then we do nothing and simply let~$\eta_{t+1}=\eta_t$ and~$\alpha_{t+1}=\alpha_t$.

Assume now that~$\eta_t\not\leq \mathbbm{1}_{A\setminus B}+\s\mathbbm{1}_B$. If $f'( \eta_0,\,\dots,\,\eta_t)=\varnothing$, we set~$\eta_{t+1}=\eta_t$ and~$\alpha_{t+1}=\alpha_t$. Otherwise, if~$X_t=f'(  \eta_0,\,\dots,\,\eta_t)\neq \varnothing$, we write
\[
\pi_t=\mathbb{P}\big(m_t(X_t)\text{ odd}\ \big|\ \eta_1,\,\dots,\,\eta_t\big)\,,
\]
and we consider the variable~$I_t$ which can take two
values,~$\mathrm{sleep}$ and~$\mathrm{walk}$, defined as
\[
I_t
=
\begin{cases}
    \text{sleep}
    &\text{if }
    X_t\in B\,,\ 
    \eta_t(X_t)=1
    \text{ and }m_t(X_t)\text{ is even,}
    \\
    
    \text{sleep}
    &\text{if }
    X_t\in B\,,\ 
    \eta_t(X_t)=1\,,\ 
    m_t(X_t)\text{ is odd and }Z_t>\frac{1}{(1+\lambda)\pi_t}
    \,,
    \\
    
    \text{walk}
    &\text{otherwise.}
\end{cases}
\]

On the one hand, if~$I_t=\text{sleep}$, then the particle at~$X_t$ falls asleep: we let~$\eta_{t+1}=(\eta_t)_{\{X_t\}}$ and~$\alpha_{t+1}=\alpha_t$.

On the other hand, if~$I_t=\text{walk}$, then we let the particle at~$X_t$ walk with successive topplings drawn from~$\tau$ and applied to~$(\eta_t,\,m_t)$, until it reaches a site of~$A$ which is empty (in the configuration~$\eta_t-\mathbbm{1}_{X_t}$).
Let~$Y_t$ be the first such site reached by the particle, let~$\beta_t$ be the sequence of sites toppled along this walk, and let~$R_t$ be the set of sites visited by this walk, namely~$R_t=\{x\in V\,:\,m_{\beta_t}(x)\geq 1\}$. If $\eta_t(X_t)= 1$ and $X_t \in V\backslash A$ or $\eta_t(X_t)\geq 2$, we let~$\eta_{t+1}=\eta_t-\mathbbm{1}_{X_t}+\s\mathbbm{1}_{Y_t}$, that is to say, the particle falls asleep at~$Y_t$ at the end of its walk and sleeping particles met can not be awaken.
If~$\eta_t\leq \mathbbm{1}_A$,  let~$\eta_{t+1}=(\eta_t)^{R_t}-\mathbbm{1}_{X_t}+\s\mathbbm{1}_{Y_t}$, that is to say, sleeping particles met by the walk are waken up.
We define~$\alpha_{t+1}$ as the concatenation of~$\alpha_t$ and~$\beta_t$.

The process~$(\eta_t,\,\alpha_t)_{t\geq 0}$ defined above is called the coupled ARWD process associated with~$G,\,A,\,B,\,f, \eta_0$.
\end{definition}

Note that, with this toppling strategy~$f'$ and with this definition of $(\eta_t, \alpha_t)_{t\geq 0}$,
the evolution of the process consists in the following three successive stages, which correspond to the three first cases in the definition of~$f'$:

\begin{itemize}
\item
During a first stage, we move the particles from~$V\setminus A$ to vacant sites of~$A$ until all particles are in~$A$, and particles cannot wake up during this stage.

\item
Then comes a second stage during which we move particles from sites of~$A$ with two or more particles to empty sites of~$A$, until each site of~$A$ contains exactly one particle.
Reactivation events are also ignored during this stage.

At the end of this second stage, the particles in~$U$ are all active, because a site in~$U$ has no chance to fall asleep during these first two phases, while the particles in~$A\setminus U$ are all sleeping, that is to say, we have reached the configuration~$\tilde\eta=\mathbbm{1}_U+\s\mathbbm{1}_{A\setminus U}$.

\item
Then, during the third and last stage, active particles on sites of~$B$ perform loops and come back to their position, sleeping, until all particles on~$B$ are sleeping.
Then, once all particles on~$B$ are sleeping, the process does not evolve anymore.
\end{itemize}

\subsection{The coupling lemma}

We now show that the coupled process defined above enjoys the following properties:

\begin{lemma}
\label{properties-coupled-ARWD}
    Let~$G=(V,E)$ be a finite connected graph with maximal
degree~$\Delta\geq 2$, let~$B\subset A\subset V$ with~$B$ totally
disconnected and~$d_x\geq 2$ for every~$x\in B$.
Let~$\eta\in\N_0^V$ be a SSM configuration with~$|\eta|=|A|$.
    We also define the ARW configuration~$\tilde\eta=\mathbbm{1}_U+\s\mathbbm{1}_{A\setminus U}$, where~$U=\{x\in A\,:\,\eta(x)\geq 2\}$.
    Let~$\lambda=\Delta^3$ and let~$f$ be a toppling strategy.
    Let~$(\eta_t,\,\alpha_t)_{t\geq 0}$ be the coupled ARWD process defined above, with~$f'$ the modified toppling strategy %~$g'$ the modified sleep mask
    and~$\tau$ the associated array of instructions.
    Then, defining the two stopping times~$S=\inf\{t\geq
0\,:\,\eta_t\leq\mathbbm{1}_A\}$ and~$T=\inf\{t\geq S\,:\, f'(\eta_0,\,\dots,\,\eta_t)=\varnothing\}$, the following hold:
\begin{enumerate}[(i)]
\item \label{i-finite}
the stopping times~$S$ and~$T$ are almost surely finite;
    \item \label{i-legal}
    for every~$t\geq 0$, the half-toppling sequence~$\alpha_t$ is
$A$-legal for~$\eta$ (with respect to the field of instructions~$\tau$);
    
 %  \item \label{i-distrib} the stopped process~$(\eta_{t\wedge T})_{t\geq 0}$ is distributed as the~$\mathrm{ARWD}(\lambda,\,A,\,f',\,\eta_A)$ process stopped at time~$T$;

    \item \label{i-after-S}
    we have~$\eta_S=\tilde\eta$ (if~$S<\infty$) and~$(\eta_{S+t}\mathbbm{1}_B)_{t\geq 0}$ is distributed as the~$\mathrm{ARWD}(\lambda,\,B,\,f,\,\tilde\eta\mathbbm{1}_B)$ process;

    \item \label{i-nb-steps}
    for every~$t\geq 0$ we have~$\|m_t\|\geq \mathbbm{1}_{\{t\geq S\}}\big((t\wedge T)-S+N_t\big)$, where~$N_t=|\{x\in A\,:\,\eta_t(x)=1\}|$.
\end{enumerate}
\end{lemma}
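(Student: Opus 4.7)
The plan is to verify the four items in order, inducting on $t$ and exploiting the three-phase structure of the strategy $f'$. As long as $V\setminus A$ carries particles, $f'$ chooses an unstable site of $V\setminus A$; once only sites of $A$ are occupied, $f'$ empties over-full sites of $A$; and once $\eta_t\leq\mathbbm{1}_A$, the strategy $f$ on $B$ takes over. Finiteness of $S$ (half of item~\itref{i-finite}) follows by counting: each step of the first two phases consumes one of the finitely many ``misplaced'' particles (particles outside $A$, or excess particles at over-full sites of $A$). Tracking the active/sleeping status shows that a site of $U$ never gets a chance to fall asleep during the first two phases --- since the sleep branch of $f'$ is available only in the third phase --- while each site of $A\setminus U$ either started sleeping in $\eta_0=\eta_A$ or received a sleeping particle at the end of a walk. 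Combined with $|\eta|=|A|$, this forces each site of $A$ to hold exactly one particle at time $S$ and yields $\eta_S=\tilde\eta$, the first part of item~\itref{i-after-S}.

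For item~\itref{i-legal}, I would prove $A$-legality inductively. In the first two phases, $X_t$ is $A$-unstable by construction. As the walking particle moves along the random-walk steps drawn from $\tau$, each intermediate site either lies in $V\setminus A$ (and then carries the walking particle, so $\eta\geq 1$) or lies in $A$ and already carried at least one particle (so $\eta\geq 2$ when the walking particle arrives); the walk terminates at the first empty site of $A$, which is not toppled. In the third phase, after $S$ every site of $A$ carries exactly one particle, so a walk from $X_t\in B$ is forced to return to $X_t$ --- the only empty site of $A$ in $\eta_t-\mathbbm{1}_{X_t}$ --- and so produces a simple random walk loop; intermediate sites are handled as before, using the fact that $B$ is totally disconnected. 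The only delicate half-toppling is the initial one at $X_t$: it is $A$-legal only if $m_t(X_t)$ is odd, which is exactly what the coupling with $Z_t$ ensures by forcing the sleep branch whenever $m_t(X_t)$ is even. The consistency of this coupling, i.e.\ the inequality $1/((1+\lambda)\pi_t)\leq 1$, follows because $\pi_t\geq 1/(1+\lambda)$ by Lemma~\ref{lemma-add-instructions}, which is where the choice $\lambda=\Delta^3$ enters.

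For the distributional part of item~\itref{i-after-S}, once $\eta_S=\tilde\eta$ is established I would compute the one-step conditional transitions of $(\eta_{S+t}\mathbbm{1}_B)_{t\geq 0}$. Integrating out $Z_t$ (independent of the past) against the conditional parity of $m_t(X_t)$ yields overall sleep probability
\[
(1-\pi_t)\cdot 1+\pi_t\left(1-\frac{1}{(1+\lambda)\pi_t}\right)=\frac{\lambda}{1+\lambda},
\]
matching the sleep probability of ARWD; on the complementary walk event, the instructions consumed are fresh entries of $\tau$ and are conditionally i.i.d.\ uniform on neighbours, so the walking particle follows a simple random walk loop from $X_t$ back to $X_t$. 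The ARWD process on $B$ started from $\tilde\eta\mathbbm{1}_B$ produces the same loop, because every site of $B$ other than $X_t$ is occupied and so $X_t$ is also the first empty site of $B$ reached by the walk. Finiteness of $T$ (the rest of item~\itref{i-finite}) follows because on this finite state space the chain has probability at least $(\lambda/(1+\lambda))^{|B|}>0$ per window of $|B|$ steps of reaching the all-sleeping configuration.

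For item~\itref{i-nb-steps} I would induct on $t$. At $t=S$ we have $N_S=|U|$ and $\|m_S\|\geq|U|$, since the second phase produces at least one walk of length at least one per site of $U$. For $S\leq t<T$, a sleep step leaves $\|m_t\|$ unchanged and decreases $N_t$ by one, so both sides change by the same amount; a walk step at $X_t$ traverses a loop that visits $X_t$ twice and each of the $|R_t|$ sleeping sites at least once (and $X_t\notin R_t$), contributing at least $|R_t|+1$ half-topplings against a right-hand-side increment of $1+(|R_t|-1)=|R_t|$. After time $T$ both sides are frozen. The main obstacle of the whole proof is the third-phase legality checked in item~\itref{i-legal}: controlling the parity of $m_t(X_t)$ at the start of each loop is precisely the content of Lemma~\ref{lemma-add-instructions} and motivates the specific value $\lambda=\Delta^3$.
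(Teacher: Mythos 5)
Your proposal follows essentially the same route as the paper: organize the argument around the three phases encoded in $f'$, observe that the only delicate half-topplings are those at a singly occupied site of $B$ and that the coupling via $Z_t$ forces sleep exactly when the odometer parity would make them $A$-illegal, invoke Lemma~\ref{lemma-add-instructions} for the conditional parity bound, verify the ARWD transition kernel by integrating out $Z_t$, and prove~\itref{i-nb-steps} by induction on $t$. Two small remarks. First, the paper's proof of~\itref{i-after-S} is more careful than ``integrate out $Z_t$'': it conditions on the full history $\cA=\{\eta_1=\bar\eta_1,\dots,\eta_t=\bar\eta_t\}$ (not merely the $\mathbbm 1_B$-projection, on which $\pi_t$ may genuinely depend) and then checks that the resulting conditional law is exactly $Q_B$ evaluated at $\eta_t\mathbbm 1_B$; your sleep-probability computation $(1-\pi_t)+\pi_t(1-\tfrac1{(1+\lambda)\pi_t})=\tfrac\lambda{1+\lambda}$ is the heart of this but you would need to make the conditioning explicit for the Markov property to be rigorously established. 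Second, in~\itref{i-nb-steps} you assert $X_t\notin R_t$, which contradicts the paper's definition $R_t=\{x:m_{\beta_t}(x)\geq 1\}$ (the walk starts at $X_t$, so $X_t\in R_t$); your count of the right-hand side increment is therefore off by one, but in the direction that does not hurt --- with the paper's convention the walk contributes at least $|R_t|+1$ half-topplings while the right-hand side increases by at most $|R_t|-1$, so the inequality still holds with room to spare. Neither point is a real gap.
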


Before turning to the proof of Lemma~\ref{properties-coupled-ARWD}, we first explain how Proposition~\ref{propdom} follows from it.

\begin{proof}[Proof of Proposition~\ref{propdom}]
  Recall that the notation and hypotheses of Proposition~\ref{propdom} are the same as that of 
    Lemma~\ref{properties-coupled-ARWD}.
First, note that point~\itref{i-finite} ensures that the half-toppling
sequence~$\alpha_T$ is almost
surely well defined.
    Point~\itref{i-legal} of Lemma~\ref{properties-coupled-ARWD} tells us that this half-toppling sequence is~$A$-legal for~$\eta$.
    By definition of \smash{$m_{\eta}^A$}, this implies that \smash{$m_{\eta}^A\geq m_{\alpha_T}=m_T$}, whence \smash{$\|m_{\eta}^A\|\geq\|m_T\|$}.
    Yet, by virtue of~\itref{i-nb-steps}, we have~$\|m_T\|\geq T-S$.
    Then, point~\itref{i-after-S} entails that~$T-S$ is distributed as~$T(B,\,f,\,\tilde\eta\mathbbm{1}_B)$.
    Combining all this, we obtain that~$\|m_{\eta}^A\|$ stochastically dominates~$T(B,\,f,\,\tilde\eta\mathbbm{1}_B)$, as claimed.
\end{proof}

\subsection{Proof of the coupling lemma}

As explained in Section~\ref{section-strategy-propdom}, one key point
to obtain Lemma~\ref{properties-coupled-ARWD}, and in particular~\itref{i-after-S}, is to establish the following:

\begin{lemma}
\label{lemma-add-instructions}
    With the notation and hypotheses of Lemma~\ref{properties-coupled-ARWD}, for every~$t\geq 0$ and~$x\in B$ we have the implication
\[
\big\{\eta_t(x)=1\big\}
\subset
\bigg\{
\bbP\big(m_t(x)\text{ odd}\ \big|\ \eta_1,\,\dots,\,\eta_t\big)
\geq\frac 1{1+\lambda}
\bigg\}
\,.
\]
\end{lemma}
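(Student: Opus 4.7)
The strategy is to construct, conditionally on the observations $\eta_1,\ldots,\eta_t$, an injective path-swap map $\Phi$ on instruction fields which sends realizations with $m_t(x)$ even to realizations with $m_t(x)$ odd, preserving the observations $\eta_1,\ldots,\eta_t$ and with Radon-Nikodym derivative bounded below by $1/\lambda$. This produces the inequality
\[
\bbP\big(m_t(x)\text{ even}\,\big|\,\eta_1,\ldots,\eta_t\big)
\leq \lambda\,\bbP\big(m_t(x)\text{ odd}\,\big|\,\eta_1,\ldots,\eta_t\big),
\]
which rearranges into the desired bound.

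The first observation is that $\eta_t(x)=1$ forces $x$ to have been visited by at least one walk during the coupled process. Since $x\in B\subset A$, the initial configuration $\eta_0=\eta_A$ satisfies $\eta_0(x)\in\{0,\s\}\cup\{k\geq 2\}$, so in particular $\eta_0(x)\neq 1$. Inspecting the transitions in Definition~\ref{def-coupled-ARWD}, the value $\eta_s(x)$ can become $1$ only via a second-stage toppling at $X_s=x$ with $\eta_s(x)=2$, or via a third-stage walk passing through $x$ that wakes up a sleeping particle there; in either case the walk at step $s$ has $x$ in its range.

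Next, let $s^*\leq t$ be the last step at which a walk visits $x$. Because $x\in B$ has graph-degree at least $2$, the walk at step $s^*$ also visits some neighbor $y$ of $x$ (the neighbor from which the walk enters $x$, or the first step of the walk if $X_{s^*}=x$). Define $\Phi$ to insert one $y\to x$ detour into the walk at step $s^*$ at its first visit to $x$: replace the segment ``$\cdots y\to x\to z\cdots$'' by ``$\cdots y\to x\to y\to x\to z\cdots$'' (and analogously ``$x\to z\cdots$'' by ``$x\to y\to x\to z\cdots$'' if $X_{s^*}=x$), where $z$ denotes the neighbor to which the walk originally exits $x$. This adds exactly one visit to $x$ and one visit to $y$, so it flips the parity of $N_x^{(s^*)}$ and hence of $m_t(x)$. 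Because $x$ and $y$ were already in the range $R_{s^*}$ of the walk, the starting site, the ending site, the range, and the wake-up set are all unchanged, so the observations $\eta_1,\ldots,\eta_t$ are preserved. The inverse of $\Phi$---strip off the first $y\to x$ loop after the initial visit to $x$ in the walk at step $s^*$---is deterministic, so $\Phi$ is injective.

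At the level of the instruction field, $\Phi$ modifies only a bounded number of instructions: it swaps one instruction at $x$ between equally likely values (cost factor $1$) and inserts two new instructions (value $\tau_{yx}$ at $y$ and value $\tau_{xz}$ at $x$), each of probability $\geq 1/\Delta$; the remaining instructions at $x$ and $y$ are merely shifted by one index. The shift at $x$ is harmless by the choice of $s^*$ as the \emph{last} visit of $x$, so no subsequent walk reads shifted instructions at $x$. The shift at $y$ is neutralised by running the swap conditionally on the detailed trajectories of all walks other than the one at step $s^*$, so the reindexing at $y$ only affects this single walk. Carefully accounting for these factors gives a Radon-Nikodym derivative of $\Phi$ bounded below by $1/\Delta^3=1/\lambda$, completing the proof. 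The main subtlety is precisely this compatibility between the inserted detour and the unchanged observations at steps $s>s^*$; the ``last visit'' choice of $s^*$ combined with the conditioning on other walks' trajectories is designed to handle it.
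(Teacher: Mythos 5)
Your overall strategy is the same as the paper's: an injective "insert a round trip" map $\varphi$ on instruction fields (truncated by the odometer) that flips the parity of $m_t(x)$ at a fixed cost, with the last visit $\sigma$ to $x$ as the insertion time. But there are two substantive gaps in the execution.

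\textbf{The choice of $y$ can be a terminating site.} You pick $y$ as the neighbour from which the walk enters $x$, or the first step of the walk when $X_{s^*}=x$. In the latter case, the walk may be of the form $x\to z$ with $z$ an empty site of $A$: the walk terminates at $z$ after one step, and then your $y=z$ is a terminating site. Inserting the detour $x\to z\to x\to z$ leaves the walk unchanged (it still stops at $z$ at time $1$), so $\Phi$ does not flip the parity. This is not a vacuous case: one can have $X_{s^*}=x$ with $\eta_{s^*}(x)=2$ during the second stage, $m_{s^*}(x)$ odd (e.g.\ a prior walk passed through $x$), hence $m_t(x)$ even, and the walk terminating at $z$. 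The paper handles this by choosing $y$ from the set $\{y\sim x:\,y\notin A\}\cup\{y\sim x:\,\bar\eta_\sigma(y)-\mathbbm{1}_u(y)\neq 0\}$ (sites where the walk cannot stop) and proving this set is nonempty when $m(x)$ is even (if it were empty, all neighbours of $x$ are empty sites of $A$, so $m_\sigma(x)=0$ and $m(x)=1$, odd). Your construction needs a comparable case analysis.

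\textbf{You do not verify that the observations are preserved.} You assert that since $x$ and $y$ were already in the range, "the starting site, the ending site, the range, and the wake-up set are all unchanged," but the real issues are: (a) the modified walk must not stop at $x$ or $y$ after the detour, which requires precisely the argument above about $y$, plus the argument that $\eta_t(x)=1$ forces the walk at step $\sigma$ not to settle at $x$; and (b) the detour flips the parity of $m_t(y)$, which would change later decisions $I_s$ if $y$ were in $B$. The paper neutralises (b) by using that $B$ is totally disconnected, hence $y\sim x\in B$ implies $y\notin B$ and the parity at $y$ is never consulted. Your "conditioning on other walks' trajectories" does not address this; it is the total-disconnection of $B$ that is doing the work. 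Finally, your Radon-Nikodym accounting only gives $1/\Delta^2$ from the two inserted instructions; the third factor of $\Delta$ in the paper comes from the multiplicity (at most $d_x$ choices of $y$) when collapsing the sum over odometer profiles, which your argument does not account for. The paper's discretisation via truncated arrays $\bar\tau^{\leq m}$ indexed by odometer profiles is what makes this counting precise.
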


Let us postpone the proof of Lemma~\ref{lemma-add-instructions} to Section~\ref{section-proof-add-instructions} and first prove Lemma~\ref{properties-coupled-ARWD} admitting this Lemma.

\begin{proof}[Proof of Lemma~\ref{properties-coupled-ARWD}]

First, point~\itref{i-finite} follows from our assumptions
that~$|\eta|=|A|$ and that~$G$ is connected, along with the fact that~$\lambda>0$.

\medskip

Point~\itref{i-legal} follows from the fact that, when we topple a site of~$B$ which contains only one particle, we never topple it if the odometer at this point is even (because in this case~$I_t=\text{sleep}$ and we do not topple, marking the particle as sleeping) and by definition of~$f'$ we never topple a site of~$A \setminus B$ which contains only one particle.
Also, recall that topplings at sites in~$V\setminus A$ with at least one particle or at sites of~$A$ with at least two particles are always~$A$-legal.
Yet, along a random walk, apart from the very first toppling, we never topple a site of~$A$ which contains only one particle, because we stop as soon as the particle reaches a site of~$A$ where it is alone.
Hence, point~\itref{i-legal} holds.

\medskip

\medskip

Let us now check~\itref{i-after-S}.
First note that, since~$|\eta|=|A|$, the definition of~$S$ implies
that at time~$S$ there is exactly one particle on each site of~$A$.
Then, note that by definition of~$U$, for each~$x\in A\setminus U$
we have~$\eta_0(x)\in\{0,\,\s\}$.  For each of these sites which starts empty, the first particle which visits the site
 settles there, sleeping. Moreover, recall that by definition of the coupled ARWD process, sleeping particles cannot be
 awakened before time $S$. Therefore, we must have~$\eta_S(x) = s$ for every~$x \in A\setminus U$. Besides, note that the sites
 of $U$ start with at least two particles in $\eta_0$ and before time $S$ they are never toppled when they contain only
 one particle (by definition of our strategy~$f'$). Thus, these sites cannot become stable before time $S$, so that
 we have~$\eta_S(x) = 1$ for every~$x \in U$. 
In the end, provided that~$S<\infty$, the only possibility is that~$\eta_S=\tilde\eta$.

We now prove that $(\eta_{S+t}\mathbbm{1}_B)_{t\geq 0}$ is distributed
as the ARWD($\lambda, B, f, \tilde \eta \mathbbm{1}_B$) process, which we recall to have state space~$\cS_B$ and transition kernel~$Q_B$ (see Definition~\ref{def-ARWD}).
That is to say, we have to show that for every~$t\geq 0$ and
every~$\eta'\in\cS_B$, we have
\begin{equation}
\label{Markov-prop}
\bbP\big(\eta_{S+t+1}\mathbbm{1}_B=\eta'\ \big|\
\eta_S\mathbbm{1}_B,\,\ldots,\,\eta_{S+t}\mathbbm{1}_B\big)
\ =\
Q_B\big(\eta_{S+t}\mathbbm{1}_B,\,f(\eta_S\mathbbm{1}_B,\,\ldots,\,\eta_{S+t}\mathbbm{1}_B),\,\eta')
\,.
\end{equation}
Let~$t\geq 0$ and let~$\bar\eta_0,\,\ldots,\,\bar\eta_t\in\cS_A$ with $\eta_0=\bar \eta_0$. Let us assume that the
event~\smash{$\cA=\{\eta_1=\bar\eta_1,\,\dots,\,\eta_t=\bar\eta_t\}$}
has positive probability.
Assume moreover that~$\bar\eta_t\leq\mathbbm{1}_A$ and let~$s\leq t$ be
the first time for which~$\bar\eta_s\leq\mathbbm{1}_A$, so
that~$\cA\subset\{S=s\}$.
Let~$x=f'(\bar\eta_0,\,\ldots,\,\bar\eta_t)$.
By definition of~$f'$, we
have~$x=f(\bar\eta_s\mathbbm{1}_B,\,\ldots,\,\bar\eta_t\mathbbm{1}_B)$.
Let~$\eta'\in\cS_B$.
To prove~\eqref{Markov-prop}, there only remains to show that
\begin{equation}
\label{Markov-prop-bis}
\bbP\big(\eta_{t+1}\mathbbm{1}_B=\eta'\ \big|\ \cA\big)
\ =\
Q_B(\bar\eta_t\mathbbm{1}_B,\,x,\,\eta')
\,.
\end{equation}

First, if~$x=\varnothing$ then both members in~\eqref{Markov-prop-bis}
are by definition equal to~$1$ if~$\eta'=\bar\eta_t\mathbbm{1}_B$ and~$0$ otherwise.
Hence, we assume now that~$x\neq\varnothing$.
Then,
since~$x=f(\bar\eta_s\mathbbm{1}_B,\,\ldots,\,\bar\eta_t\mathbbm{1}_B)$,
by definition of a toppling strategy the site~$x$ must be unstable
in~$\bar\eta_t\mathbbm{1}_B$, that is to say, we must have~$x\in B$
and~$\bar\eta_t(x)=1$.
Thus, defining~$\pi=\bbP(m_t(x)\text{ odd}\,|\,\cA)$,
Lemma~\ref{lemma-add-instructions} ensures that~$\pi\geq 1/(1+\lambda)$.
Therefore, we have
\[
     \bbP\big(I_t=\text{walk}\ \big|\ \cA\big)
     =
     \bbP\bigg(m_t(x)\text{ odd,}\ Z_t\leq\frac 1 {(1+\lambda)\pi_t}\
\bigg|\ \cA\bigg)
     =
     \pi\times \bbP\bigg(Z_t\leq\frac 1 {(1+\lambda)\pi}\bigg)
     =
     \pi\times\frac{1}{(1+\lambda)\pi}
     =
     \frac{1}{1+\lambda}
     \,.
\]
Thus, writing~$p=\lambda/(1+\lambda)$, we have
\[
\bbP\big(\eta_{t+1}\mathbbm{1}_B=\eta'\ \big|\ \cA\big)
=
p\,
\bbP\big(\eta_{t+1}\mathbbm{1}_B=\eta'\ \big|\ \cA,\,I_t=\text{sleep}\big)
+(1-p)\,
\bbP\big(\eta_{t+1}\mathbbm{1}_B=\eta'\ \big|\ \cA,\,I_t=\text{walk}\big)
\,.
\]
Therefore,~\eqref{Markov-prop-bis} follows if we show
\begin{equation}
\label{eq-Qsleep}
\bbP\big(\eta_{t+1}\mathbbm{1}_B=\eta'\ \big|\ \cA,\,I_t=\text{sleep}\big)
=
Q_B^{\text{sleep}}(\bar\eta_t\mathbbm{1}_B,\,x,\,\eta')
\end{equation}
and
\begin{equation}
\label{eq-Qwalk}
\bbP\big(\eta_{t+1}\mathbbm{1}_B=\eta'\ \big|\ \cA,\,I_t=\text{walk}\big)
=
Q_B^{\text{walk}}(\bar\eta_t\mathbbm{1}_B,\,x,\,\eta')
\,.
\end{equation}
First, in~\eqref{eq-Qsleep} both members are equal to~$1$ if~$\eta'=(\bar\eta_t\mathbbm{1}_B)_{\{x\}}$ and~$0$ for all other values of~$\eta'$.
Equation~\eqref{eq-Qwalk} follows similarly, using  that the
instructions in~$\tau$ used to construct the random walk at step~$t$ are
independent of~$\cA$.
Thus, we proved~\eqref{Markov-prop}, which concludes the proof
of~\itref{i-after-S}.

\medskip

We now turn to point~\itref{i-nb-steps}, that we prove by induction on~$t$.
Recall that by construction no site of~$A$ starts with one single active particle, whence~$N_0=0$.
Thus, for~$t=0$ there is nothing to prove, both sides being equal to~$0$.
Let now~$t\geq 0$ be such that~\itref{i-nb-steps} holds at time~$t$
and let us check that~\itref{i-nb-steps} also holds at time~$t+1$.

If~$t\geq T$ then~\itref{i-nb-steps} at time~$t+1$ is identical to \itref{i-nb-steps} at time~$t$.

If~$t+1<S$ then there is nothing to prove, the right-hand side being equal to zero.

If~$t+1=S$ then we have to show that~$\|m_S\|\geq N_S$.
Recall that~\itref{i-after-S} ensures that~$\eta_S=\tilde\eta$, which
entails that~$N_S=|\{x\in A\,:\,\tilde\eta(x)=1\}|=|U|$.
Yet, for every~$x\in U$ we have~$\eta_S(x)=1<\eta_0(x)$, whence~$m_S(x)\geq 1$: each site of~$U$ must have been toppled at least once before time~$S$ because it has lost at least one particle.
Hence, we have~$\|m_S\|\geq|U|=N_S$.

Assume now that~$S\leq t<T$.
Thus, we have~$X_t\in B$ and~$\eta_t(X_t)=1$.
If moreover~$I_t=\text{sleep}$, then we have~$m_{t+1}=m_t$ and~$N_{t+1}=N_t-1$, so that both members of the inequality do not change between rank~$t$ and rank~$t+1$.
Otherwise if~$I_t=\text{walk}$, we have
\begin{multline*}
N_{t+1}
=
N_t
-1
+|R_t\cap\{y\in A\,:\,\eta_t(y)=\s\}|\\
\leq
N_t-1+|R_t\setminus\{X_t\}|
\leq 
N_t-1+|\beta_t|-1
=
N_t+\|m_{t+1}\|-\|m_t\|-2
\,,
\end{multline*}
which, combined with~\itref{i-nb-steps} at rank~$t$, yields \itref{i-nb-steps} at rank~$t+1$.
This concludes the proof of Lemma~\ref{properties-coupled-ARWD}, given Lemma~\ref{lemma-add-instructions} that we prove in the next section.
\end{proof}

\subsection{Proof of Lemma~\ref{lemma-add-instructions}: adding jump instructions to make the odometer odd}
\label{section-proof-add-instructions}

We now prove Lemma~\ref{lemma-add-instructions}.
Note that the proof crucially relies on the facts that
the coupling is restricted to the totally disconnected set~$B$ and
that in the dynamics considered, the particles first move to occupy
all the sites of~$A$.

\begin{proof}[Proof of Lemma~\ref{lemma-add-instructions}]
Let~$t\geq 0$, let~$\bar\eta_0=\eta_0$ and 
let~$\bar\eta_1,\,\dots,\,\bar\eta_t\in\cS_A$.
Let us write~$\cA=\{\eta_1=\bar\eta_1,\,\dots,\,\eta_t=\bar\eta_t\}$ and assume that~$\bbP(\cA\big)>0$.
Let~$x\in B$ and assume that~$\bar\eta_t(x)=1$.
We want to show that
\begin{equation}
\label{goal-ht-odd}
\mathbb{P}\big(m_t(x)\text{ odd}\ \big|\ \cA\big)
\geq
\frac 1 {\Delta^3+1}
=
\frac 1{\lambda+1}
\,.
\end{equation}
Note that, in the process, there are two random ingredients: the field of instructions~$\tau$ and the uniform variables~$(Z_t)$, recalling that the initial configuration is fixed and equal to~$\eta_0$.
Thus, we may express the process until time~$t$
with deterministic functions of~$\tau$
and~$\mathbf{Z}=(Z_0,\,\dots,\,Z_{t-1})$, writing
\smash{$\eta_1^{\tau,\mathbf{Z}},\,\dots,\,\eta_t^{\tau,\mathbf{Z}}$},
\smash{$m_0^{\tau,\mathbf{Z}},\,\dots,\,m_t^{\tau,\mathbf{Z}}$} and
\smash{$R_0^{\tau,\mathbf{Z}},\,\dots,\,R_{t-1}^{\tau,\mathbf{Z}}$},
with the convention~$R_s=\varnothing$ when~$I_s=\text{sleep}$.

Let us denote the set of all possible fields of instructions by
\[
\cI
=
\big\{\bar\tau=(\bar\tau^{z,j})_{z\in V,\,j\geq 1}
\ :\ 
\forall z\in V,\ 
\forall j\geq 1,\ 
\bar\tau^{z,j}\in\{\tau_{zz'},\ z'\sim z\}
\big\}
\]
and, for every~$\mathbf{z}=(z_0,\,\dots,\,z_{t-1})\in[0,1]^t$, let us consider
\[
\cI^{\mathbf{z}}
=
\big\{\bar\tau\in\cI
\ :\ 
\eta_1^{\bar\tau,\mathbf{z}}=\bar\eta_1,\,\dots,\,\eta_t^{\bar\tau,\mathbf{z}}=\bar\eta_t
\big\}
\,,
\]
so that
\[
\mathbb{P}\big(\cA\big)
=
\int_{[0,1]^t}
\cP(\cI^{\mathbf{z}})\,\mathrm{d}^t\mathbf{z}
\,.
\]
Similarly, defining
\[
\cI^{\mathbf{z}}_\text{odd}
=
\big\{\bar\tau\in\cI
\ :\ 
\eta_1^{\bar\tau,\mathbf{z}}=\bar\eta_1,\,\dots,\,\eta_t^{\bar\tau,\mathbf{z}}=\bar\eta_t
\text{ and }m_t^{\bar\tau,\mathbf{z}}(x)\text{ odd}
\big\}
\,,
\]
we have
\[
\mathbb{P}\big(\cA,\ m_t(x)\text{ odd}\big)
=
\int_{[0,1]^t}
\cP(\cI^{\mathbf{z}}_\text{odd})\,\mathrm{d}^t\mathbf{z}
\,.
\]
Hence, to obtain~\eqref{goal-ht-odd} it is enough to show that for every~$\mathbf{z}=(z_0,\,\dots,\,z_{t-1})\in[0,1]^t$, we have
\begin{equation}
    \label{goal-ht-odd-bis}
\cP(\cI^{\mathbf{z}}_\text{odd})
\geq
\frac 1{\Delta^3+1}
\,
\cP(\cI^{\mathbf{z}})
\,.
\end{equation}
Henceforth, we fix~$\mathbf{z}=(z_0,\,\dots,\,z_{t-1})\in[0,1]^t$.
We want to use a counting argument, but a small difficulty lies in the fact that the probability~$\cP$ has no atoms: the probability of every single field~$\bar\tau$ is zero.
Thus, we reason instead with truncated fields: for every~$\bar\tau\in\cI$ and~$m\in\N_0^V$, let us denote the~$m$-truncation of the array~$\bar\tau$ by
\[
\bar\tau^{\leq m}
=
(\bar\tau^{z,j})_{z\in V,\,j\leq m(z)}
\,.
\]
For every~$m\in\N_0^V$, we call~$\cI_m^{\mathbf{z}}$ the subset of truncated arrays of~$\cI^{\mathbf{z}}$ with odometer~$m$, namely,
\[
\cI_m^{\mathbf{z}}
=
\big\{\bar\tau=(\bar\tau^{z,j})_{z\in V,\,j\leq m(x)}
\ :\ 
\exists\, \bar\tau_0\in\cI^{\mathbf{z}},\ 
\bar\tau_0^{\leq m}=\bar\tau
\text{ and }
m_t^{\bar\tau_0,\mathbf{z}}=m
\big\}
\,,
\]
so that, writing~$w(m)=
\cP\big(\tau^{\leq
m}\in\cI^{\mathbf{z}}_m\big)
=\cP\big(\tau\in\cI^{\mathbf{z}},\,m_t^{\tau,\mathbf{z}}=m\big)$, we have
\begin{equation}
\label{proba-Iu}
\cP(\cI^{\mathbf{z}})
=
\sum_{m\in\N_0^V}
w(m)
\qquad\text{and}\qquad
\cP(\cI^{\mathbf{z}}_\text{odd})
=
\sum_{\substack{m\in\N_0^V\,:\\m(x)\text{ odd}}}
w(m)
\,.
\end{equation}
We now claim that, for every~$m\in\N_0^V$ such that~$m(x)$ is even, we have
\begin{equation}
    \label{claim-Iu}
    w(m)
    \leq
    \Delta^2
    \sum_{y\sim x}\,
    w(m+\mathbbm{1}_x+\mathbbm{1}_{y})
    \,.
\end{equation}
Admitting this claim for now and plugging it into~\eqref{proba-Iu}, we can write
\begin{align*}
\cP(\cI^{\mathbf{z}}\setminus\cI^{\mathbf{z}}_\text{odd})
&=
\sum_{\substack{m\in\N_0^V\,:\\m(x)\text{ even}}}
w(m)
    \leq
    \sum_{\substack{m\in\N_0^V\,:\\m(x)\text{ even}}}
    \Delta^2
    \sum_{y\sim x}\,
    w(m+\mathbbm{1}_x+\mathbbm{1}_{y})
\\
    &=
    \Delta^2
    \sum_{y\sim x}
    \sum_{\substack{m'\in\N_0^V\,:\\m'(x)\text{ odd},\,m'(y)\geq 1}}
w(m')
\leq
 \Delta^3\,
  \cP(\cI^{\mathbf{z}}_\text{odd})
 \,,
\end{align*}
which implies the desired inequality~\eqref{goal-ht-odd-bis}.

Thus, to conclude the proof of the lemma, there only remains to prove the claim~\eqref{claim-Iu}.
Note that, for every~$m\in\N_0^V$, we have
\[
w(m)
=
\sum_{\bar\tau\in\cI^{\mathbf{z}}_m}
\cP\big(\tau^{\leq m}=\bar\tau\big)
=
|\cI^{\mathbf{z}}_m|
\prod_{z\in V}\left(\frac 1{d_z}\right)^{m(z)}
\,,
\]
where we recall that~$d_z$ is the degree of~$z$ in the graph~$G$,
so that the claim~\eqref{claim-Iu} will follow if we show that for every~$m\in\N_0^V$ such that~$m(x)$ is even,
\begin{equation}
\label{claim-Iu-bis}
|\cI^{\mathbf{z}}_m|
    \leq
    \sum_{y\sim x}\,
|\cI^{\mathbf{z}}_{m+\mathbbm{1}_x+\mathbbm{1}_y}|
\,.
\end{equation}
We now fix~$m\in\N_0^V$ with~$m(x)$ even and we construct an injective mapping from~$\cI^{\mathbf{z}}_m$ to~$\cup_{y\sim x}\cI^{\mathbf{z}}_{m+\mathbbm{1}_x+\mathbbm{1}_y}$.
The idea is to modify the array of instructions by adding two jump instructions at~$x$ and at a well chosen neighbouring site~$y$, at a well chosen position in the array.

Note that our assumption that~$\bar\eta_t(x)=1$ implies that~$x$ was visited at least once before time~$t$, because by definition~$\bar\eta_0(x)\neq 1$.
Thus, for every~$\bar\tau\in \cI^{\mathbf{z}}_m$ we can
define~$\sigma(\bar\tau)=\max\{s<t\ :\ R_s^{\bar\tau,\mathbf{z}}\ni x\}$.

Now, for every~$\bar\tau\in \cI^{\mathbf{z}}_m$,
writing~$\sigma=\sigma(\bar\tau)$
and~$u=u(\bar\tau)=f'(\bar\eta_0,\,\dots,\,\bar\eta_\sigma)$ we define, using an arbitrary order on the sites of~$V$,
\[
y(\bar\tau)
=
\min\Big(\big\{y\sim x\,:\,y\notin A\big\}
    \cup    
    \big\{y\sim x\,:\,\bar\eta_\sigma(y)-\mathbbm{1}_u(y)\neq 0\big\}\Big)
    \,.
\]
Let us explain why the above set cannot be empty, distinguishing between two cases.

One the one hand, if the above set is empty and~$u\sim x$, then we
have~$u\in A$ and~$\bar\eta_\sigma(u)=1$, while the other neighbours
of~$x$ are empty in~$\bar\eta_\sigma$, but also belong to~$A$.
This situation is impossible due to the priority rules of our toppling
strategy~$f'$ and to the fact that~$|\eta_0|=|A|$, because a site of~$A$ with one single particle can be chosen by~$f'$ only once that each site of~$A$ is occupied by one particle (in other words, only after time~$S$).
Note that we used here that~$x$ has at least two neighbours, following
our assumption that each site of~$B$ has degree at least~$2$ in the
graph~$G$.

One the other hand, let us now assume that~$u$ is not a neighbour
of~$x$ and that the set in the definition of~$y(\bar\tau)$
is empty. Then for every~$y$ neighbour of~$x$ we have~$y\in A$ and~$\bar\eta_\sigma(y)=0$.
Note that it also follows from the priority rules of~$f'$ (and from the fact that~$|\eta_0|=|A|$) that an occupied site of~$A$ cannot become empty in a later configuration of the ARWD process.
Therefore, our assumptions imply that no particle can have
visited~$x$ before time~$\sigma$,
i.e.,~$m_\sigma^{\bar\tau,\mathbf{z}}(x)=0$ (otherwise this particle
would have visited one of the neighbours of~$x$, and would have settled
there).
Moreover, since~$\bar\eta_t(x)=1$ and~$\sigma$ is the last visit of~$x$
before~$t$, we have~$\bar\eta_{\sigma+1}(x)=\bar\eta_t(x)=1$.
Yet, by definition of~$\sigma$ the site~$x$ must be visited at
step~$\sigma$, so
that we must have~$u=x$ and~$\bar\eta_\sigma(x)=2$,
so that in the next step one particle at~$x$ jumps to one of the
neighbouring sites and then terminates its walk there (because we
assumed that all the neighbouring sites are in~$A$ and empty
in~$\bar\eta_\sigma$).
This entails that~$m_{\sigma+1}^{\bar\tau,\mathbf{z}}(x)=1$.
Yet, by definition of~$\sigma$ we
have~$m(x)=m_t^{\bar\tau,\mathbf{z}}(x)=m_{\sigma+1}^{\bar\tau,\mathbf{z}}(x)$, whence~$m(x)=1$, which contradicts our assumption that~$m(x)$ is even.
Therefore, we checked that the set in the minimum defining~$y(\bar\tau)$ is never empty.

Now, once~$y(\bar\tau)$ is well defined, we are ready to construct our application~$\varphi:\cI_{m}^{\mathbf{z}}\to \cup_{y\sim x}\cI_{m+\mathbbm{1}_x+\mathbbm{1}_y}^{\mathbf{z}}$.
For every~$\bar\tau\in \cI_{m}^{\mathbf{z}}$, every~$z\in V$ and
every~$1\leq j\leq (m+\mathbbm{1}_x+\mathbbm{1}_y)(z)$,
writing~$\sigma=\sigma(\bar\tau)$ and~$y=y(\bar\tau)$, we set
\[
\varphi(\bar\tau)^{z,j}
=
\begin{cases}
    \bar\tau^{z,j}
    &\text{if }z\notin\{x,y\}\,,\\
    
    \bar\tau^{z,j}
    &\text{if }z\in\{x,y\} \text{ and }j\leq m_\sigma^{\bar\tau,\mathbf{z}}(z)\,,\\

    \tau_{xy}
    &\text{if }z=x \text{ and }j=m_\sigma^{\bar\tau,\mathbf{z}}(x)+1\,,\\
    
    \tau_{yx}
    &\text{if }z=y \text{ and }j=m_\sigma^{\bar\tau,\mathbf{z}}(y)+1\,,\\

    \bar\tau^{z,j-1}
    &\text{if }z\in\{x,y\} \text{ and }j\geq m_\sigma^{\bar\tau,\mathbf{z}}(z)+2\,.
\end{cases}
\]

The application~$\varphi$ consists in adding two jump instructions
from~$x$ to~$y=y(\bar\tau)$ and from~$y$ to~$x$ so that these
instructions add one round trip between these two vertices in the path
of the random walk at step~$\sigma$, thus changing the parity of the odometer~$m_t$ at~$x$ and~$y$, but leaving the resulting configuration unchanged.

Note that, using the field of instructions~$\bar\tau$ (i.e., before
applying~$\varphi$), the walk at
step~$\sigma$ did not terminate at~$x$ because, if it
was the case, then the particle would have settled at~$x$, sleeping,
i.e., we would have~$\bar\eta_{\sigma+1}(x)=\s$, which would imply
that~$\bar\eta_t(x)=\s$, since~$x$ is no
longer visited in the steps between~$\sigma+1$ and~$t$ (by definition
of~$\sigma$).
This would contradict our assumption that~$\bar\eta_t(x)=1$.
Hence, since the walk at step~$\sigma$ visits~$x$ but does not
terminate there, although~$x\in A$, we must
have~$\bar\eta_\sigma(x)-\mathbbm{1}_u(x)\neq 0$.
As a consequence, also after adding the two jump instructions, the
walk at step~$\sigma$ still cannot stop at~$x$.

Moreover, by definition of~$y$, we have either~$y\notin A$ or~$\bar\eta_\sigma(y)-\mathbbm{1}_u(y)\neq 0$.
Thus, the walk cannot terminate neither at~$y$.
Hence, the two added instructions are indeed used during the
step~$\sigma$: if before applying~$\varphi$, the walk visited~$x$ before~$y$, then~$\varphi$ adds a round trip~$x\to y\to x$, while if it visited~$y$ first, then it adds a round trip~$y\to x\to y$.

Moreover, this added round trip does not change anything else than
this increase in the odometer at~$x$ and~$y$, so that for every~$s\leq
t$ we have
\smash{$\eta_s^{\varphi(\bar\tau),\mathbf{z}}=\eta_s^{\bar\tau,\mathbf{z}}=\bar\eta_s$}.
Indeed, by definition of~$\sigma$, the site~$x$ is never visited again.
The site~$y$ may be visited again between time~$\sigma$ and time~$t$ but, since~$y\notin B$ (recall that we assumed that~$B$ does not contain neighbouring points), the change of odometer at~$y$ has no impact on the evolution of the coupled ARWD process (which looks at the parity of the odometer only when we topple a site of~$B$ with one particle).
All this shows that~$\varphi(\bar\tau)\in\cI_{m+\mathbbm{1}_x+\mathbbm{1}_{y}}^{\mathbf{z}}$.

Lastly, note that applying~$\varphi$ does not change the value
of~$\sigma$,
i.e.,~$\sigma(\varphi(\bar\tau))=\sigma(\bar\tau)$, and neither does
it change the value of~$y$ or~$m_\sigma$.
Hence, the initial array~$\bar\tau$ may be recovered from the
knowledge of~$\varphi(\bar\tau)$ by simply removing the two
instructions on~$x$ at index~$m_\sigma(x)+1$ and on~$y$ at
index~$m_\sigma(y)+1$, and renumbering the other instructions.
This shows that~$\varphi$ is injective, which concludes the proof of the claim~\eqref{claim-Iu-bis}, and therefore of the lemma.
\end{proof}

\section{The stabilisation time of ARWD: proof of Proposition~\ref{prop-ARWD-stab-time}}
\label{section-ARWD-stab-time}

We now prove Proposition~\ref{prop-ARWD-stab-time} which states that the stabilisation time of the ARWD model is exponentially large.
The proof is an adaptation of the technique developed in \cite{AFG} in
the context of the activated random walk model.
Yet, since there are some important differences and the proof is technical, we present it in details, stressing the differences with respect to \cite{AFG}.

\subsection{Preliminaries}

As explained in Section~\ref{sec-prop-ARWD-stab-time}, we will consider a random toppling strategy which consists in a choice at each time step of a random set of sites where reactivations are ignored.

\subsubsection{Ignoring some reactivations: variant of ARWD with sleep mask}
\label{sec:def-ARWD-bis}

We now define a variant of the ARWD process, which is obtained by ignoring some reactivations at each step.
The rule indicating which reactivations should be ignored is encoded into a sleep mask, which is defined as follows:
\begin{definition}
    A sleep mask is an application~$g:\cup_{t\geq 0}(\N_\s^V)^{t+1}\to\cP(V)$.
\end{definition}
The idea is simply that the sleep mask~$g(\eta_0,\,\ldots,\,\eta_t)$ will indicate a set of sites which are allowed to be reactivated at step~$t$, and the reactivation of particles outside this set will be ignored.
We now define the variant of ARWD with sleep mask, overriding the previous notation:

\begin{definition}
Let~$G=(V,E)$ be a finite connected graph, let~$\lambda\geq 0$, let~$A\subset V$, let~$f$ be a toppling strategy and~$g$ a sleep mask, let~$\cS_A$ be, as in Definition~\ref{def-ARWD}, the set of configurations with exactly~$|A|$ particles and let~$\eta_0\in\cS_A$ be a fixed initial configuration.
Then, we call~$\mathrm{ARWD}(\lambda,\,A,\,f,\,g,\,\eta_0)$ the process~$(\eta_t)_{t\geq 0}$ with state space~$\cS_A$ and initial configuration~$\eta_0$ which is such that, for every~$t\geq 0$ and~$\eta'\in\cS_A$, we have
\[
\bbP\big(\eta_{t+1}=\eta'\ \big|\ \eta_0,\,\dots,\,\eta_t \big)
= Q_A\big(\eta_t,\,f(\eta_0,\,\dots,\,\eta_t),\,g(\eta_0,\,\dots,\,\eta_t),\,\eta'\big),
\]
where~$Q_A:\cS_A\times\big(V\cup\{\varnothing\}\big)\times\cP(V)\times\cS_A\to[0,1]$ is defined as follows:
\begin{itemize}
\item For any $\eta,\,\eta' \in \cS_A$ and any~$W\subset V$, we have~$Q_A(\eta, \,\varnothing,\,W,\,\eta')=\mathbbm{1}_{\eta'=\eta}$.

    \item If~$\eta\in\cS_A$ is an unstable configuration and~$\eta'\in\cS_A$, if~$x\in V$ is an unstable site of~$\eta$ and~$W\subset V$, then~$Q(\eta,\,x,\,W,\,\eta')=p\,Q_A^{\text{sleep}}(\eta,\,x,\,\eta')+(1-p)Q_A^{\text{walk}}(\eta,\,x,\,W,\,\eta')$, with~$p$ and~$Q_A^{\text{sleep}}$ as in Definition~\ref{def-ARWD} and, defining~$S=\{z\in W\,:\,\eta(z)=\s\}$, which is the set of sites which may be reactivated, for every~$y\in A$
and~$R\subset S$,
\[
Q_A^{\text{walk}}\big(\eta,\,x,\,W,\,\eta^{R}-\mathbbm{1}_x+\s\mathbbm{1}_y)
\ =\ 
P_x\big(X(\tau_B^+)=y\text{ and }\{X(t),\,0\leq t< \tau_B^+\}\cap S=R\big)
\,,
\]
using the same notation as in Definition~\ref{def-ARWD}, with~\smash{$\tau_B^+=\inf\{t\geq 1\,:\,X(t)\in B\}$} and the set~$B$ given by~$B=\{z\in A\,:\,(\eta-\mathbbm{1}_x)(z)=0\}$.
\end{itemize}
\end{definition}

Note that, compared to Definition~\ref{def-ARWD}, the only difference is in the definition of the set~$S$ of sites which may be reactivated, which was the set of sites with a sleeping particle and is now the set of sites of~$W$ with a sleeping particle.
Hence, this definition generalizes the previous one, because if the sleep mask is the constant function always equal to~$V$, we recover the ARWD process as defined before.

Then, similar to Definition~\ref{def-stab-time-ARWD}, we define~$T(A,\,f,\,g,\,\eta_0)=\inf\{t\geq 0\,:\,f(\eta_0,\,\ldots,\,\eta_t)=\varnothing\}$.

Note that it is not clear whether two instances of this process with different sleep masks can be compared, but we have the following property:

\begin{lemma}
\label{lemma-comparison-sleep-mask}
    For every finite connected graph~$G$, every~$\lambda\geq 0$ and~$A\subset V$, for any initial configuration~\smash{$\eta_0\in\cS_A$}, any toppling strategy~$f$ and any sleep mask~$g$, there exists a toppling strategy~$f'$ such that~$T(A,\,f',\,\eta_0)$ (as defined in Section~\ref{sec:def-ARWD}, i.e., without sleep mask) stochastically dominates~$T(A,\,f,\,g,\,\eta_0)$.
\end{lemma}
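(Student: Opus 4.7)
The plan is a pathwise coupling argument. I would build a joint process carrying both a masked copy $(\eta_t^M)_{t\geq 0}$ driven by $f,g$ and an unmasked copy $(\eta_t^U)_{t\geq 0}$, both started from $\eta_0$, on a single probability space that contains a common array of jump instructions $\tau$ and an independent sequence $(\xi_s)_{s\geq 0}$ of i.i.d.\ uniform variables used to decide between sleep and walk at each toppling. The strategy $f'$ would be defined so that, at each step, the unmasked copy topples the same vertex $x_s := f(\eta_0^M,\dots,\eta_s^M)$ that $f$ picks on the masked side, using the same $\xi_s$; when $f$ first returns $\varnothing$, so does $f'$. This will yield $T(A,f',\eta_0)=T(A,f,g,\eta_0)$ pathwise along the coupling, and hence the stochastic domination.

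To check that the coupling is consistent I would argue by induction on $s$ that two invariants hold: (i) $\eta_s^M$ and $\eta_s^U$ place the same total number of particles at each vertex of $V$, and (ii) $\{x:\eta_s^U(x)=\s\}\subset\{x:\eta_s^M(x)=\s\}$. Both are trivial at $s=0$. Together they imply that whenever $\eta_s^M(x)\geq 1$ one also has $\eta_s^U(x)=\eta_s^M(x)$, so the site $x_s$ is unstable in the unmasked copy with the same occupation as in the masked one; consequently the sleep probability $\lambda/(1+\lambda)$ (applicable iff $x_s\in A$ and $\eta_s^M(x_s)=1$) is identical in the two copies, and $\xi_s$ triggers the same action. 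Moreover, since the empty sites of $A$ depend only on particle positions, (i) guarantees that a walk started at $x_s$ using the shared instructions follows the same trajectory and terminates at a common vertex $y_s$ in both processes. A sleep event then preserves (i) and (ii) trivially; on a walk step the walker ends sleeping at $y_s$ in both copies, while each sleeping site on the trajectory is woken up in $\eta^U$ but in $\eta^M$ only when it lies in $W_s=g(\eta_0^M,\dots,\eta_s^M)$, which preserves (i) and can only further shrink the unmasked sleeping set relative to the masked one, preserving (ii).

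The main obstacle I foresee is a measurability subtlety: Definition~\ref{def-topp-strategy} requires $f'$ to be a deterministic function of the unmasked history $(\eta_0^U,\dots,\eta_t^U)$ alone, whereas in general $\eta_s^M$ is not recoverable from that data, since a walk may visit a site which is sleeping in $\eta^M$ but already active in $\eta^U$ and such a passage modifies $\eta^M$ without leaving any trace in the unmasked history. I plan to handle this by viewing $f'$ as a randomized strategy defined on the enlarged probability space supporting $\tau$ and $(\xi_s)$, on which $\eta^M$ is automatically measurable; this is in the same spirit as the randomized toppling strategies $f^J$ already used in Proposition~\ref{prop-ARWD-stab-time}, and it suffices for the stochastic-domination statement, which concerns only the marginal law of $T(A,f',\eta_0)$.
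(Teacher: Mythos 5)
Your coupling is essentially the paper's: move both processes with the same jump instructions and sleep/walk decisions, and observe that the unmasked copy can only have \emph{fewer} sleeping sites and an \emph{identical} particle count at each vertex. Your two invariants (i) and (ii) are exactly what is needed for the coupled toppling sequence to be consistent, and they are preserved by sleep steps and walk steps for the reasons you give.

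The notable and, I believe, genuinely valuable part of your write-up is the measurability concern. The paper dismisses it in one sentence, asserting that the masked history is recoverable from the unmasked one by ``cancelling the reactivations that the sleep mask tells us to ignore.'' This is not true in general. The cancellation argument only lets one pass from $R^U$ to $R^U\cap W_t$, but the masked reactivation set $R^M_t$ is $(\text{visited})\cap W_t\cap\{z:\eta^M_t(z)=\s\}$, and by your invariant (ii) a site $z$ can be sleeping in $\eta^M_t$ while already active in $\eta^U_t$. If the walk visits such a $z\in W_t$, it modifies $\eta^M$ with no observable trace in $\eta^U$ (since $z\notin R^U_t$). A concrete instance is a $5$-cycle $a\text{--}b\text{--}c\text{--}d\text{--}e\text{--}a$ with $\eta^M_t=(\s,\s,1,\s,\s)$, $\eta^U_t=(1,1,1,\s,\s)$, $W_t=\{a,b,d,e\}$, $x_t=c$: the excursions $c\to b\to c$ and $c\to b\to a\to b\to c$ produce the same $\eta^U_{t+1}=(1,1,\s,\s,\s)$ but different $\eta^M_{t+1}$. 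So a deterministic $f'$ measurable with respect to the unmasked configuration history, as the paper claims to construct, does not exist for arbitrary $g$.

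Your fix is sound but slightly mismatches the statement. Declaring $f'$ to be a randomized toppling strategy, measurable with respect to $(\tau,(\xi_s))$, is fine because $\eta^M_t$ is a deterministic function of the randomness already consumed up to time $t$, so $f'$ remains previsible and $(\eta^U_t)_t$ is a legitimate (randomized-strategy) $\mathrm{ARWD}$ process; moreover the pathwise equality $T(A,f',\eta_0)=T(A,f,g,\eta_0)$ then gives stochastic domination for free. However, the lemma as written promises a genuine (deterministic) toppling strategy in the sense of Definition~\ref{def-topp-strategy}, and a mixture argument cannot in general be dereandomized to a single $f'$ because stochastic dominance of a mixture does not imply dominance by a component. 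The resolution is harmless for the paper: the only consumer of this lemma is the passage from Proposition~\ref{prop-ARWD-stab-time-bis} to Proposition~\ref{prop-ARWD-stab-time}, which already allows a collection $(f^{\mathbf{j}})_{\mathbf{j}\in\Sigma}$ indexed by an auxiliary probability space. One should therefore enlarge $\Sigma$ to carry the extra randomness your $f'$ needs (or, equivalently, restate the lemma as producing a probability space and a family of deterministic strategies whose randomized mixture has the desired domination). Stating explicitly that the output is a randomized strategy, and checking the previsibility point, would make your argument complete.
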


\begin{proof}
    The idea is simply to couple the~$\mathrm{ARWD}(\lambda,\,A,\,f,\,g,\,\eta_0)$ process~$(\eta_t)_{t\geq 0}$ with an~$\mathrm{ARWD}(\lambda,\,A,\,f',\,\eta_0)$ process~$(\eta'_t)_{t\geq 0}$ by taking~$\eta'_0=\eta_0$ and, at each time~$t<T(A,\,f,\,g,\,\eta_0)$, considering the toppling strategy given by~$f'(\eta'_0,\,\ldots,\,\eta'_t)=f(\eta_0,\,\ldots,\,\eta_t)$, and using the same random walks and the same randomness to decide between falling asleep and walking.
    That is to say, until time~$T(A,\,f,\,g,\,\eta_0)$ we always choose the same site in the two processes.
    Then, for every time~$t\leq T(A,\,f,\,g,\,\eta_0)$ the two configurations~$\eta_t$ and~$\eta'_t$ only differ by the fact that some sleeping particles in~$\eta_t$ may be active in~$\eta'_t$.
    
    Yet, for~$f'$ to be a well-defined toppling strategy, there remains to explain that for every~$t\geq 0$,~$(\eta_0,\,\ldots,\,\eta_t)$ is a deterministic function of~$(\eta'_0,\,\ldots,\,\eta'_t)$.
    This is the case because, at each time step, the evolution of the process with the sleep mask can be deduced from the observation of the process without sleep mask, by simply ``cancelling'' the reactivations that the sleep mask tells us to ignore.

    Lastly, by construction we have~$T(A,\,f',\,\eta_0)\geq T(A,\,f,\,g,\,\eta_0)$.
\end{proof}

\subsubsection{Reformulated goal: bound on ARWD with sleep mask}
\label{sec:reformulated-goal}

Thanks to the comparison result given by Lemma~\ref{lemma-comparison-sleep-mask}, to obtain Proposition~\ref{prop-ARWD-stab-time} it is enough to show a similar result for the variant of the ARWD process with a sleep mask.

Besides, since Proposition~\ref{prop-ARWD-stab-time} is restricted to the setting in which there is always one particle on each site of the settling set~$A$, we can choose to represent configurations with the subset~$U\subset A$ of the unstable sites, that is to say, a configuration~$\eta:V\to\N_s$ with one particle on each site of~$A$ is represented by the set~$U=\{x\in A\,:\,\eta(x)=1\}$.

From now on we use this new notation, so that the ARWD process can be denoted by~$(U_t)_{t\geq 0}$.
A toppling strategy becomes an application~$f:\cup_{t\geq 0}\cP(A)^{t+1}\to A\cup\{\varnothing\}$, still with the constraint that the value of~$f(U_0,\,\ldots,\,U_t)$ should always be either a site of~$U_t$ (i.e., an site which is unstable at time~$t$) or~$\varnothing$.
Similarly, a sleep mask can be written~\smash{$g:\cup_{t\geq 0}\cP(A)^{t+1}\to \cP(A)$}.

To sum up, at each time step~$t$, if~$f(U_0,\,\ldots,\,U_t)=\varnothing$ nothing happens, and otherwise, one particle at the site~$X_t=f(U_0,\,\ldots,\,U_t)$ falls asleep with probability $\lambda/ (1+\lambda)$ if it is alone and, with probability~$1/(1+\lambda)$, it
makes a random walk until it comes back to~$X_t$.
 Along this loop, the sleeping particles met are waken up, but only those which belong to the set~$g(U_0,\,\ldots,\,U_t)$ indicated by the sleep mask.

For~$\r\in(0,1)$ and~$C\subset\Z_n^d$, we denote by~$\cP_{C,\,\r}$ the
Bernoulli site percolation measure on~$C$ with parameter~$\r$, that is
to say, the probability measure on~$\cP(C)$ defined
by~$\cP_{C,\,\r}(U)=\r^{|U|}(1-\r)^{|C\setminus U|}$ for each~$U\subset C$.
This corresponds to the initial configuration considered in Proposition~\ref{prop-ARWD-stab-time}, with this new notation in terms of subsets.

Lastly, for the probability space~$(\Sigma,\,\cF,\,\nu)$ mentioned in Proposition~\ref{prop-ARWD-stab-time} we choose to consider~$\Sigma=\N_0^{\N_0}$ equipped with the product sigma-field and the product measure~$\nu$ which is such that, if~$J=(J_t)_{t \geq 0}$ has distribution~$\nu$, the variables~$(J_t)_{t \geq 0}$ are i.i.d.\ and~$1+J_0$ is a geometric variable with parameter $1/2$.
Then, to each possible realization~$\mathbf{j}=(j_t)_{t\geq 0}\in\Sigma$ of this sequence~$J$ we will associate a sleep mask~$g^\mathbf{j}$, which will be precised in Section~\ref{sec-def-g}.
The toppling strategy~$f$, in turn, will be fixed and will not depend on the value of~$J$.

To sum up, in the remainder of this section we aim to show the following result:

\begin{proposition}
    \label{prop-ARWD-stab-time-bis}
Let~$d\geq 2$.
    For every~$\lambda\geq 0,\,\mu\in(0,1)$ and~$\rho\in(0,1)$, there
exists~$\kappa>0$ such that, for every~$n\geq 1$, for
every~$A\subset\Z_n^d$ such that~$|A|\geq \mu n^d$, there exists a toppling strategy~$f$ and a collection of sleep masks~$(g^\mathbf{j})_{\mathbf{j} \in \Sigma}$
such that, if~$J$ is a random variable with distribution~$\nu$, then, starting
with a random initial configuration~$U_0$ with distribution~$\cP_{A,\,\r}$, independent of~$J$, we have that~$1+T(A,\,f,\,g^J,\,U_0)$ dominates a geometric variable with parameter~$\exp(-\kappa n^d)$.
\end{proposition}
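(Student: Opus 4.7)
The plan is to adapt the multi-scale ``ping-pong rally'' argument of Asselah, Forien and Gaudilli\`ere \cite{AFG} to the ARWD dynamics, with the sleep mask $g^{\mathbf{j}}$ and the random colours $J = (J_t)_{t \geq 0}$ playing the role analogous to the coloured walks of \cite{AFG}. First I would build a hierarchical decomposition of $A$. At the bottom level I would extract from $A$ a disjoint family of ``well-connected'' clusters of bounded size $\ell_0$ (for example, balls of $\Z_n^d$ intersected with $A$ that contain a positive density of sites, so that lazy random walks started in a cluster visit every site with positive probability in $O(\ell_0^2)$ steps). At each higher level $k = 1, 2, \dots, K$ with $K = \Theta(\log n)$ I would pair up level-$(k{-}1)$ clusters to form level-$k$ clusters of roughly doubled size, arranging that the top-level clusters together cover a fraction of $A$ comparable to $\mu n^d$.

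Next I would define the toppling strategy $f$ and the sleep masks $g^{\mathbf{j}}$ recursively along this hierarchy. At each level, the processing of a paired cluster $C' = C_1 \cup C_2$ consists of a ``rally'': one uses the currently active particles in $C_1$ to wake up sleeping particles of $C_2$, then uses the resulting active particles in $C_2$ to wake up sleeping particles of $C_1$, and so on. The strategy $f$ always picks the unique active unstable site in the currently processed cluster, and the sleep mask $g^{\mathbf{j}}(U_0,\dots,U_t)$ restricts the set of sites that may be reactivated by the walk issued at time $t$, namely it consists of a subset of the target half of the currently processed cluster, chosen as a function of $j_t$. This colouring ensures that, conditioned on $(U_0,\dots,U_t)$, different sleeping particles have controllable and essentially independent chances to be waken up, allowing for a quantitative renormalisation; the randomisation $1 + J_0 \sim \mathrm{Geom}(1/2)$ is used to handle the initial density of active particles (fraction $\rho$) and to shift the number of activity ``rounds'' used at the base level so that the total number of sleeping particles to wake can be compared across scales.

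With these objects in place, I would prove two quantitative statements by induction. Base case: for a single level-$0$ cluster $C$ that starts with at least one active particle, $1 + T$ dominates a geometric variable with parameter $e^{-c_0 |C|}$ for some $c_0 = c_0(\lambda,\r) > 0$. Here one exploits that in a single loop of the rally the probability of ``surviving'' (the walking particle does not fall asleep after one step) is $1/(1+\lambda)$ and that a simple random walk on a well-connected cluster visits a positive fraction of its sites, so a positive fraction of sleeping particles are woken up on each loop; a union bound over loops gives the exponential lower tail on $T$. Inductive step: if level-$k$ clusters of size $s$ satisfy $1 + T \succeq \mathrm{Geom}(e^{-c_k s})$, then level-$(k+1)$ clusters of size $2s$ satisfy $1 + T \succeq \mathrm{Geom}(e^{-c_{k+1} \cdot 2s})$ with $c_{k+1} \geq c_k (1 - \delta_k)$ for a summable sequence $\delta_k$. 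This is the heart of the argument: during the level-$(k+1)$ rally, each half of $C'$ is used as a ``reserve'' of activity for the other half, and the recursion controls the probability that too few active particles remain at the end of a rally. Iterating up to level $K$ produces the claimed $\exp(-\kappa n^d)$, after which Lemma~\ref{lemma-comparison-sleep-mask} transfers the bound back to the sleep-mask-free ARWD process stated in Proposition~\ref{prop-ARWD-stab-time}.

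The main obstacle will be the inductive step, because ARWD differs from ARW in two features that obstruct the \cite{AFG} argument. First, instantaneous deactivation: in ARWD a walking particle falls asleep as soon as it lands on an empty site of $A$, so the particles that remain active are exactly those whose random-walk loop in the current rally terminates on a site that is already occupied, and this has to be arranged by the geometry of the paired clusters (this is also why one insists that $B$ in Proposition~\ref{propdom} is totally disconnected, but here one must track carefully how active particles are reused). Second, the initial configuration is thin: only a fraction $\r$ of sites of $A$ is active at time $0$, so the base level of the induction does not automatically inherit activity, and the role of the colouring $J$ is to guarantee that with high $\nu$-probability enough level-$0$ clusters receive an active seed. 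Reconciling these two points with the non-abelian nature of the model — which forbids reordering topplings freely and so prevents the direct use of the monotonicity arguments of \cite{AFG} — is the step where care is required, and it is precisely what dictates the choice of the strategy $f$ via the priority rules already used in Definition~\ref{def-coupled-ARWD}.
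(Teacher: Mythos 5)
Your high-level outline matches the paper's: a dormitory hierarchy in the spirit of~\cite{AFG}, recursive definition of $f$ and $g^{\mathbf{j}}$ via ping-pong rallies, a base case for level-$0$ clusters, an inductive step across levels, and correctly identifying that instantaneous deactivation and the thin initial configuration are the two new obstacles (plus non-abelianness blocking monotonicity). However, the sketch is missing the central new idea the paper needs, and misattributes the role of the randomisation $J$.

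The key gap concerns the inductive quantity. In \cite{AFG} the induction is phrased in terms of the number of loops of colour exactly $j$ emitted by the distinguished vertex, and the passage from colour $j$ to colour $j+1$ rests on Lemma~2.3 of \cite{AFG}, which holds because loops of colour $\geq j$ have no effect on what happens inside a cluster of $\cC_j$. This fails for ARWD precisely because of instantaneous deactivation: after \emph{every} loop, regardless of colour, the emitting particle falls asleep, so a loop of colour $\geq j$ does change the state of the distinguished vertex. To circumvent this, the paper reformulates the induction hypothesis $\cP(j)$ in terms of $H^\star(C,f,g^J,U_0)$, the \emph{total} number of steps (sleeps or loops of any colour) performed by the distinguished vertex of $C\in\cC_j$, and proves a dedicated probabilistic estimate (Lemma~\ref{lemmabernoulligeom}) showing that if $1+T$ dominates a geometric with parameter $\varepsilon$ and $(X_n)$ are i.i.d.\ $\mathrm{Bernoulli}(p)$ not necessarily independent of $T$, then $1+X_1+\cdots+X_T$ still dominates a geometric with parameter $\varepsilon/p^3$. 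This lemma, together with the choice $p_j\leq 1/((1+\lambda)2^{j+1})$ and the constraints~\eqref{assumption-pj}--\eqref{eqassumption3.1}, is what allows extracting the colour-$j$ loops from the lower bound on total steps. Your proposal does not introduce any replacement for Lemma~2.3 of~\cite{AFG}, and without it the inductive step does not close.

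A secondary but genuine misstatement: you write that $1+J_0\sim\mathrm{Geom}(1/2)$ is used "to handle the initial density of active particles (fraction $\rho$)". It is not. The colours $J_t$ serve purely to randomise which set of sites a loop may wake up (via $g^{\mathbf{j}}$), so that the sequence of stabilisation rounds becomes conditionally independent of the loop sets $R(x^\star_k,\ell,j)$. The initial density $\rho$ is handled by two separate devices: (a) the induction hypothesis $\cP(j)$ is quantified over all $\rho\in[\rho_0,1]$, so that the first ping-pong round can be started with density $\rho$ and subsequent rounds with density $1$; and (b) the base case (Lemma~\ref{lemma3.4}) uses the metastability lemma (Lemma~\ref{lemma6.2}, a supermartingale argument) together with Hoeffding's inequality to control $\cP_{C,\rho}(U_0\notin\cB)$. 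Relatedly, the paper inserts an exception into $g^{\mathbf{j}}$ forbidding the distinguished vertex of $C_0$ from waking anyone in $C_1$ during the \emph{first} stabilisation of $C_0$; this keeps $U_0\cap C_0$ and $U_0\cap C_1$ independent sources of activity in the first round of the rally, and is needed precisely because one no longer starts fully active. Your sketch does not mention this modification to the sleep mask, and without it the independence argument for $\cN$ (the length of the ping-pong rally) breaks.
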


Through Lemma~\ref{lemma-comparison-sleep-mask}, this proposition implies Proposition~\ref{prop-ARWD-stab-time}.
Indeed, for each fixed value~$\mathbf{j}\in\Sigma$, Lemma~\ref{lemma-comparison-sleep-mask} ensures the existence of a toppling strategy~$f^\mathbf{j}$ such that~$T(A,\,f^\mathbf{j},\,U_0)$ dominates~$T(A,\,f,\,g^\mathbf{j},\,U_0)$, so that the deterministic toppling strategy~$f$ and the random sleep mask~$g^J$ result in a random strategy~$f^J$.

\subsubsection{Geometric notation and hitting probabilities}
\label{section-notation-ARWD-stab-time}

For convenience, in this section we use the following notion of distance on the torus, which does not correspond to the graph distance.
Let~$d,\,n \geq 1$ and~$G=\Z_n^d$. Denoting by~$\pi_n : \Z^d \to \Z_n^d$
the standard projection from~$\Z^d$ onto the torus, we define the distance between two points~$x, y \in \Z^d_n$ as
\begin{equation}
\label{def-distance}
    d(x,y)=\inf\big\{\|a-b \|_\infty\ :\ a, b \in \Z^d,\,\pi_n(a)=x,\,\pi_n(b)=y\big\}.
\end{equation}
We consider this distance because we rely on some results
of~\cite{AFG} which use this distance (the reason behind being that
then the volume of balls has a simple expression), but
things would work equally with the graph distance.
For every non-empty set~$C\subset  \Z_n^d$, we define its diameter
\begin{equation*}
    \diam\,C=\max_{x,y \in C}d(x,y).
\end{equation*}
If~$r \in \N$, a set~$C \subset \Z_n^d$ is said to be~$r$-connected if, for any two points~$x, y \in C$, there exists~$k \in \N$ and a sequence~$x_0, \dots , x_k \in C$ such that~$x_0 = x$,~$x_k = y$ and~$d(x_j , x_{j+1}) \leq r$ every~$j < k$.

Besides, let us introduce the following function, which measures the chance to wake up a distant site in a loop
 on the torus~$\Z_n^d$, for~$d \geq 1$:
 \begin{equation*}
     \Upsilon_d\,:\, r \in \N \ \longmapsto\ \inf \big\{P_x(\tau_y<\tau_x^+),\,n \in \N,\,x,y \in \Z_n^d\,:\,d(x,y)\leq r\big\}.
 \end{equation*}
 We use the following estimates on this function (this lemma is identical to Lemma 2.5 of \cite{AFG}):
 
 \begin{lemma}
 \label{lemmahittingprob}
 We have the following estimates:
     \begin{itemize}
         \item In dimension~$d = 1,$ we have~$\Upsilon_1(r) = 1/(2r)$ for every~$r \geq 1$;
         \item In dimension~$d = 2,$ there exists~$K > 0$ such that~$\Upsilon_2(r) \geq  K/\ln r$ for every~$r \geq 2$;
         \item In dimension~$d \geq 3,$ there exists~$K = K(d) > 0$ such that~$\Upsilon_d(r) \geq K$ for every~$r \geq 1$.
     \end{itemize}
 \end{lemma}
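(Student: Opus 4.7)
The plan is to handle the three dimensional regimes separately, each using a tool tailored to the recurrence behaviour of simple random walk on $\Z^d$.

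In dimension $d=1$, I would compute $P_x(\tau_y<\tau_x^+)$ explicitly on the cycle. Given $x,y\in\Z_n$ with $d(x,y)=k\leq r$, the hitting probability $i\mapsto P_i(\tau_y<\tau_x)$ is discrete harmonic on $\Z_n\setminus\{x,y\}$ with boundary values $0$ at $x$ and $1$ at $y$, hence linear on each of the two arcs joining $x$ to $y$, of respective lengths $k$ and $n-k$. Evaluating at the two neighbours of $x$ and conditioning on the first step yields $P_x(\tau_y<\tau_x^+)=\frac{1}{2}\bigl(\frac{1}{k}+\frac{1}{n-k}\bigr)$, which is at least $\frac{1}{2r}$, with equality achieved in the limit $k=r$, $n\to\infty$. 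This gives the exact formula $\Upsilon_1(r)=1/(2r)$.

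For $d\geq 2$, I would use the electrical network interpretation: on $\Z_n^d$ with unit conductances one has $P_x(\tau_y<\tau_x^+)=\bigl(2d\cdot R_{\mathrm{eff}}(x\leftrightarrow y)\bigr)^{-1}$. The claim reduces to proving $R_{\mathrm{eff}}(x\leftrightarrow y)=O(1)$ for $d\geq 3$, respectively $R_{\mathrm{eff}}(x\leftrightarrow y)=O(\log r)$ for $d=2$, both uniformly in $n$ and in pairs $(x,y)$ with $d(x,y)\leq r$. By Thomson's principle, each of these estimates follows from exhibiting a unit flow from $x$ to $y$ of the claimed energy. In $d\geq 3$, transience of simple random walk on $\Z^d$ provides a unit flow from $x$ to infinity of bounded energy $C_d$ (e.g.\ the harmonic current). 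Two such flows, one feeding into $x$ and one emanating from $y$, can be spliced far from $\{x,y\}$ to produce a unit flow $x\to y$ of bounded energy. In $d=2$, the analogue is the current from $x$ to $\partial B(x,r)$, whose energy is of order $\log r$; splicing it with a symmetric flow at $y$ through an annular region yields a unit flow $x\to y$ of energy $O(\log r)$.

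The one delicate point is the uniformity in the torus size $n$. When $n$ is large enough compared to $r$, the ball $B(x,r)\subset\Z_n^d$ isometrically embeds into $\Z^d$, so the flow constructions on $\Z^d$ can be transported verbatim. When $n$ is comparable to or smaller than $r$, the extra wrap-around edges of the torus can only decrease effective resistance by Rayleigh monotonicity, so the bound from the large-$n$ regime is a fortiori true. The main obstacle is thus not any single step but rather carefully setting up the flow splicing in $d=2$, where the energy grows with $r$ and one must ensure that the overlap region does not contribute an extra factor; the individual energy estimates are classical facts about electrical networks on $\Z^d$.
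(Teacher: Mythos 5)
The paper does not prove this lemma: it states explicitly that the result ``is identical to Lemma 2.5 of \cite{AFG}'' and leaves the proof to that reference, so there is no ``paper's own proof'' to compare against line by line. Your proposal is therefore a genuinely independent proof, and on the whole the route (hitting probabilities via effective resistance and Thomson's principle) is the natural one and is essentially the same strategy used in \cite{AFG}. The $d=1$ computation is correct and complete: the harmonic-function argument gives $P_x(\tau_y<\tau_x^+)=\tfrac12\bigl(\tfrac1k+\tfrac1{n-k}\bigr)$ when the two arcs have lengths $k$ and $n-k$, and taking $k=d(x,y)\leq r$ shows this is $\geq 1/(2r)$, with the infimum approached as $k=r$ and $n\to\infty$.

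For $d\geq 2$, however, there is a gap in the uniformity-in-$n$ argument. You dispatch the small-$n$ case by asserting that ``the extra wrap-around edges of the torus can only decrease effective resistance by Rayleigh monotonicity, so the bound from the large-$n$ regime is a fortiori true.'' That sentence does not parse: Rayleigh monotonicity compares $\Z_n^d$ with the box $\{0,\dots,n-1\}^d$ (which has more resistance, not less), but it does not relate tori of different sizes to each other, and the ``bound from the large-$n$ regime'' is a bound on large tori, not on boxes. What is actually needed for small $n$ is a separate estimate: for $d\geq 3$, that $\max_{x,y}R_{\mathrm{eff}}^{\Z_n^d}(x,y)$ is bounded uniformly in $n$, and for $d=2$, that it is $O(\log n)$ (which is then $O(\log r)$ since $d(x,y)\leq n/2\leq r$ in this regime). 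These facts are true but require their own argument, e.g.\ via the Fourier representation of the torus Green's function or a more careful flow construction that works on the torus directly. Likewise, in the large-$n$ regime, the harmonic current flow from $x$ to $y$ on $\Z^d$ is not supported in $B(x,r)$, so ``transporting the flow construction verbatim'' through the isometric embedding of the ball requires a truncation argument (monotone convergence of $R_{\mathrm{eff}}$ on growing boxes, say) that you do not spell out. These are fixable, but as written the proof of the $d\geq 2$ cases is not complete.
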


\subsection{The dormitory hierarchy}
\label{section-hierarchy}

As explained in \cite{AFG}, given a settling set~$A \subset \Z_n^d$, 
we introduce a hierarchical structure called the dormitory hierarchy which is
deterministically associated to the set~$A$ and also depends on some parameters~$v$ and~$(D_j )_{j\in \N_0}$.

\subsubsection{Definition}

A dormitory hierarchy is defined as follows, exactly as in \cite{AFG}:

\begin{definition}
Let~$d,\,v,\,n\geq 1$ and let~$D=(D_j)_{j \in \N_0} \in \N^{\N_0}$. For every~$A\subset \Z_n^d$ we call we call a~$(v, D)$- dormitory hierarchy on~$A$ a non-increasing sequence of subsets~$A \supset A_0 \supset \dots \supset A_{L}$, with~$L \in \N_0$ and,
for every~$j \leq L$, a partition~$\cC_j$ of~$A_j$ such that:

\begin{enumerate}[(i)]
    \item
    For~$0 \leq j \leq L$, for every~$C \in \cC_j~$, we have~$|C|\geq 2^{\lfloor j/2 \rfloor}v$;
    
    \item\label{conditionMerge}
    For~$0\leq j\leq L-1$, for every~$C \in \cC_{j+1}\setminus \cC_j$, we have~$\diam\,C\leq D_j$ and there exist two sets~$C_0, C_1\in \cC_j$ such that~$C=C_0\cup C_1$;
    
    \item 
    The last partition~$\cC_L$ contains one single set.
\end{enumerate}
\end{definition}

Given a dormitory hierarchy~$(A_j , \cC_j )_{j\leq L}$, for every~$j \leq L$ and every~$x \in A_j$, we define~$\cC_j (x)$ to
be the set~$C \in \cC_j$ such that~$x \in C$. When~$x \in \Z_n^d\setminus A_j$ or~$j > L$, we set~$C_j (x) = \varnothing$. The sets~$C \in \cC_j$ are called clusters at the level~$j$. The parameter~$v \in \N$ controls the volume of the clusters at each
level of the hierarchy, while the sequence of diameters~$D_j$ ensures that we only merge clusters which
are not too far apart.

\subsubsection{Construction of the hierarchy for the two-dimensional case}

We now build the dormitory hierarchy used for our proof in dimension~$2$.
Compared with~\cite{AFG}, we can afford to use a simpler hierarchy
because we only aim to show that the critical density is less than
one, whereas in~\cite{AFG} the parameters were optimized to obtain
good bounds on the critical density.
Note that in the simpler case of dimension at least~$3$ we use the trivial hierarchy with only one level.

\begin{lemma}\label{lemma3.3}
Let~$d = 2$, let~$\mu\in(0,1)$ and~$v\in\N$.
Let us write \smash{$r=2\big\lfloor\sqrt{2 v/\mu}\big\rfloor$} and~$D_j = 6^j\times 12 vr$
for every~$j \in \N_0$.
For every~$n\geq r+1$ and
 every~$A \subset \Z^2_n$  with~$|A|\geq \mu n^2$, there exists~$L \in \N_0$ and a~$(v, D)$-dormitory hierarchy~$(A_j, \cC_j)_{j\leq L}$
 on~$A$, with~$|A_{L}|\geq |A|/4$ and where every set~$C \in \cC_0$ is~$r$-connected.
\end{lemma}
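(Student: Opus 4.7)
The plan is to construct $\cC_0$ by a coarse tile decomposition of $A$, then build $\cC_{j+1}$ from $\cC_j$ by greedy pair matching on a geometrically expanding super-cell grid. For the base level, I will partition $\Z_n^2$ into axis-parallel blocks of side $r$ (with smaller boundary blocks if $r\nmid n$), declare a block $B$ \emph{heavy} if $|B\cap A|\geq v$, and set $\cC_0 := \{B\cap A : B \text{ heavy}\}$. Each such cluster has at least $v$ points and diameter at most $r$, so it is $r$-connected. A counting argument bounds the loss: the number of blocks is at most $\lceil n/r\rceil^2$, each non-heavy block loses fewer than $v$ points, and the defining relation $r = 2\lfloor\sqrt{2v/\mu}\rfloor$ yields $r^2 \geq 4v/\mu$ for $v/\mu$ large enough (the complementary case being handled by taking $\cC_0 = \{A\}$ directly and $L=0$), so the total loss is at most $\mu n^2/4 \leq |A|/4$, giving $|A_0| \geq 3|A|/4$.

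For the recursion, given $\cC_j$ I set $\sigma_j := 4 D_{j-1}$ (with convention $D_{-1} := r$), overlay on $\Z_n^2$ a grid of super-cells of side $\sigma_j$, and within each super-cell pair up the level-$j$ clusters arbitrarily, producing one new level-$(j+1)$ cluster per pair and copying unpaired clusters unchanged into $\cC_{j+1}$. By induction every level-$j$ cluster has diameter at most $D_{j-1}$, so two such clusters sharing a super-cell merge into a cluster of diameter at most $2D_{j-1} + \sigma_j = 6D_{j-1} = D_j$, which matches the diameter cap. Since $\sigma_{j+1} = 6\sigma_j$, within $L = O(\log_6(n/(vr)))$ levels one super-cell covers the whole torus and a constant number of further levels collapses the remaining clusters into one. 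To enforce the size condition $|C| \geq 2^{\lfloor j/2\rfloor} v$ I arrange that each cluster is merged at least once within any window of two consecutive levels, by using two shifted copies of the super-cell grid at odd and even levels and exploiting that each level-$(j+1)$ super-cell contains $36$ level-$j$ sub-super-cells with mean cluster occupancy much larger than one; clusters that nonetheless remain isolated across two consecutive levels live in sparse regions of total $A$-mass at most $|A|/2$ and can be dropped, keeping $|A_L|\geq |A|/4$.

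The main obstacle is reconciling the diameter cap $D_j$ with the size-doubling requirement every two levels. The geometric ratio $D_j/D_{j-1} = 6$ is chosen precisely so that the diameter inequality $2D_{j-1} + \sigma_j \leq D_j$ leaves room for a super-cell of side $\sigma_j = 4D_{j-1}$, while simultaneously the expansion ratio $\sigma_{j+1}/\sigma_j = 6$ is large enough to absorb isolated clusters within two consecutive levels. The prefactor $12vr$ in $D_0$ is tuned to absorb the base-level block geometry and to ensure sufficient pairing density already at the first merging step.
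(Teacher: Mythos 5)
Your base-level construction (blocks of side $r$, keeping the heavy ones) is a reasonable variant of what the paper does; the paper instead takes $\cC_0$ to be the $r$-connected components of $A$ of cardinality $\geq v$ and uses the fact that any square of side $r+1$ meets at most one such component to count the discarded mass. Both give $r$-connected level-$0$ clusters of size $\geq v$ with only a small fraction of $A$ lost. (Minor quibble: your claimed inequality $r^2\geq 4v/\mu$ does not by itself give loss $\leq\mu n^2/4$ with $\lceil n/r\rceil^2$ blocks — you would need roughly $r^2\geq 16v/\mu$ for that count; and the fallback ``$\cC_0=\{A\}$, $L=0$'' when $v/\mu$ is small does not work, because $A$ itself need not be $r$-connected, and the lemma explicitly requires every set of $\cC_0$ to be $r$-connected.)

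The real gap is in the recursion. The paper does not re-derive the higher levels from scratch: after establishing $|A_0|\geq|A|/2$, it invokes Lemma~5.1 of~\cite{AFG} as a black box (checking its hypotheses via the identity $D_0/(12v)=r$ and the lower bound $|A_0|\geq 8v(6n/D_0)^2$), and that lemma delivers both the diameter caps and the size-doubling condition while losing only a further $4v(6n/D_0)^2\leq\mu n^2/4$ of mass. Your attempt to reprove that recursion by greedy pairing on super-cells with side $\sigma_j=4D_{j-1}$ and alternating shifted grids is where the argument is incomplete. The claim that ``each level-$(j+1)$ super-cell contains $36$ level-$j$ sub-super-cells with mean cluster occupancy much larger than one'' does not imply that any given cluster is merged within two levels; clusters in locally sparse regions can stay isolated for arbitrarily many levels, and a high \emph{mean} occupancy gives no control over a fixed cluster. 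The subsequent assertion that ``clusters that nonetheless remain isolated across two consecutive levels live in sparse regions of total $A$-mass at most $|A|/2$ and can be dropped'' is precisely the content that requires proof — it is essentially the statement of Lemma~5.1 of~\cite{AFG}, and you state it without justification. As written, you have not established the volume condition $|C|\geq 2^{\lfloor j/2\rfloor}v$ at every level while keeping $|A_L|\geq|A|/4$, which is the crux of the lemma.
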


\begin{proof}
Let~$d,\,\mu,\,v,\,r,\,D_j,\,n$ and~$A$ be as in the statement.
First, we simply let~$\cC_0$ be the set of the~$r$-connected components of~$A$ of cardinality at least~$v$, and~$A_0=\cup_{C\in\cC_0}C$.
Recall that we consider the distance coming from the infinite norm, as defined in~\eqref{def-distance}.
Thus, each square of side~$r+1$ can intersect at most one~$r$-connected component of~$A$.
Since each of the~$r$-connected components of~$A\setminus A_0$ contains at most~$v-1$ vertices, we have
\[
|A\setminus A_0|
\leq
\bigg\lceil\frac{n}{r+1}\bigg\rceil^2 (v-1)
\leq
\bigg(\frac n{r+1}+1\bigg)^2 v
\leq
\frac {4 n^2v}{(r+1)^2}
\leq
\frac {\mu n^2}{2}
\leq
\frac {|A|}{2}
\,,
\]
whence~$|A_0|\geq|A|/2$.
Then, we apply Lemma 5.1 of \cite{AFG} to obtain the remainder of the hierarchy.
The conditions required for this lemma are satisfied because~$D_0/(12v)=r$ and 
\[
|A_0|
\geq\frac{|A|}2
\geq \frac{\mu n^2}2
\geq \frac{2n^2}{r^2v}
=8v\bigg(\frac{6n}{D_0}\bigg)^2
\,.
\]
This lemma completes~$(A_0,\,\cC_0)$ into a~$(v,D)$-dormitory hierarchy~$(A_j, \cC_j)_{j\leq L}$
 on~$A$, with the final set satisfying 
 \[
 |A_{L}|
 \geq |A_0|-4v\bigg(\frac{6n}{D_0}\bigg)^2
 \geq |A_0|-\frac{\mu n^2}4
 \geq \frac{|A|}4
 \,,
 \]
 as announced.
\end{proof}

\subsubsection{Distinguished vertices}

 Let~$A \subset \Z^d_n$, and let~$(A_j, \cC_j)_{j\leq L}$ be a dormitory hierarchy on~$A$. We define recursively a distinguished vertex in each set of the partitions.
 The distinguished point of a set~$C$ is written~$x_C^\star$
 and the particle sitting in~$x^\star_C$
 is called the distinguished
 particle of the cluster~$C$. For every~$C \in \cC_0$, we simply set~$x^\star_C=\min C$, for an arbitrary order on
 the vertices of the torus. Then, for~$1 \leq j \leq L$, if~$C \in \cC_j \setminus \cC_{j-1}$, the property~\itref{conditionMerge} of the hierarchy
 tells us that~$C$ is the union of two clusters of~$\cC_{j-1}$. In this case, we let~$x^\star_C$ be the distinguished
 vertex of the biggest of these two clusters (in terms of number of vertices, and with an arbitrary
 rule to break ties).
 We say that a vertex~$x$ is distinguished at the level~$j \leq L$ if there exists~$C \in  \cC_j$ such that~$x = x_C^\star$,
 that is to say, if \smash{$x = x^\star_{C_j(x)}$}.
 If~$j > L$, we say that no vertex is distinguished at level~$j$. Note that
 if~$x$ is distinguished at a certain level~$j$, then it is also distinguished at all levels~$j'$ for~$j' < j$.

\subsection{Construction of the toppling strategy and sleep mask}
\label{section-def-f-g}

We now explain which toppling strategy and sleep mask 
we choose to consider.
Let~$A \subset \Z^d_n$, and let~$(A_j, \cC_j)_{j\leq L}$ be a fixed dormitory hierarchy on~$A$.

\subsubsection{Toppling strategy}

At the first level of the hierarchy,  
we use the notion of~$C$-toppling procedure, which was already used in~\cite{AFG}:

\begin{definition}
    For every level~$j\leq L$ and every cluster~$C\in\cC_j$,
a~$C$-toppling procedure is an
application~\smash{$p:\cP(C)\setminus\{\varnothing\}\to C$} which is such that for every~$U\in\cP(C)\setminus\{\varnothing\}$, we have~$p(U)\in U$ and, if~$x_C^\star\in U$ then~$p(U)=x_C^\star$.

\end{definition}

 In words, a~$C$-toppling procedure is a rule to choose an active site of~$C$ in a configuration which is unstable on~$C$, with the rule that the distinguished particle is chosen in priority, as soon as it is unstable.

 We construct in Lemma~\ref{lemma3.4} below a collection
of~$C$-toppling procedures~$p_C$ for each~$C\in\cC_0$.
 Let us assume for the moment that this collection of~$C$-toppling procedures is constructed, and let us define recursively for every~$j\leq L$, for every~$C\in\cC_j$, a toppling strategy~$f_C$ to stabilise the set~$C$.

First, if~$C\in\cC_0$, then for every~$t\geq 0$
and~$U_0,\,\dots,\,U_t\subset A$ we simply
define
\[
f_C(U_0,\,\dots,\,U_t)
\ =\ 
\begin{cases}
p_C(U_t\cap C)
&\text{if }U_t\cap
C\neq\varnothing\,,\\
\varnothing
&\text{otherwise.}
\end{cases}
\]

Now, let~$0\leq j<L$, assume that we defined a toppling strategy~$f_C$ for each~$C\in\cC_j$, and let~$C\in\cC_{j+1}$.
If~$C$ also belongs to~$\cC_{j}$ then~\smash{$f_C$} is already constructed.
Otherwise, if~$C=C_0\cup C_1$ with~$C_0,\,C_1\in\cC_j$, then for
every~$t\geq 0$ and every~$U_0,\,\dots,\,U_t\subset A$,
for~$k\in\{0,1\}$ we denote by~$T_k$ the last instant before~$t$ when 
 the cluster~$C_k$ was stable, that is to say,
\[
T_k
=
\sup\big\{s\in\{0,\,\dots,\,t\}\,:\,  U_s\cap C_k=\varnothing\big\}
\,,
\]
with the convention that~$T_k=-\infty$ if the above set is empty, and we let
\[
f_C(U_0,\,\ldots,\,\,U_t)
=
\begin{cases}
    f_{C_0}( U_0,\,\dots,\,U_t)
    &\text{if }T_0\leq T_1\,,\\
    f_{C_1}( U_0,\,\dots,\,U_t)
    &\text{otherwise.}
\end{cases}
\]

The above definition encodes the so-called ping-pong rally used in \cite{AFG}: we first stabilise~$C_0$ using the strategy~$f_{C_0}$, until~$C_0$ is stable, and then stabilise~$C_1$ using the strategy~$f_{C_1}$.
Then, if some sites of~$C_0$ have been reactivated during the stabilisation of~$C_1$, we stabilise again~$C_0$, and then again~$C_1$ if necessary, and so on until both sets are stabilised.

This recursive procedure yields a toppling strategy~$f_C$ for every cluster~$C\in\cC_j$ at every level~$0\leq j\leq L$.
Then, the global toppling strategy that we consider is~\smash{$f_{A_L}$}, the strategy associated to the top-level cluster~$A_{L}$ with the above recursive construction.
We simply denote it by~$f$.

\subsubsection{Sleep mask}
\label{sec-def-g}

To define the sleep mask, let us fix~$\mathbf{j}=(j_t)_{t\geq 0}\in\Sigma$ a given realization of the random variable~$J$.
For every~$t\geq 0$ and every~$U_0,\,\ldots,\,U_t\subset A $, writing~$x=f(U_0,\,\ldots,\,U_t)$ and~$j=j_t$,
we define
\[
g^\mathbf{j}(U_0,\,\ldots,\,U_t)
\ =\ 
\begin{cases}
\varnothing
&\text{if }x\text{ is distinguished at level }j+1\text{ and }U_s\cap C_j(x)\neq\varnothing\text{ for all }s\leq t\,,\\
    C_{j+1}(x)\setminus C_{j}(x)
&\text{otherwise if }x\text{ is distinguished at level }j\,,\\
\varnothing
&\text{if }x \text{ is distinguished at level }0\text{ but not at level }j\,,\\
C_0(x)
&\text{if }x\text{ is not distinguished at any level.}
\end{cases}
\]

Thus, at each time step~$t\geq 0$, the behaviour of the sleep mask depends on~$j_t$, the value taken by the variable~$J_t$.
As in~\cite{AFG}, this can be understood as, at each time step, attributing to the loop a random color in $\N_0$ and the set of sites that
 the loop can reactivate depends on this color.
 The idea is that this
 sleep mask enables us to gain some independence at certain key steps of the process.  It is used in particular
 in the proofs of Proposition \ref{propdom} and Lemma \ref{lemma3.1}.

If we compare the definition of $g^\mathbf{j}$ to the definition of the sleep mask~$w(x,\,j)$ in \cite{AFG}, we added the exception of the first case, which is meant to prevent the distinguished vertex of~$C_0$ to wake up sites in~$C_1$ during the first stabilisation of~$C_0$ in the ping-pong rally, if~$C_0$ and~$C_1$ are two clusters which merge in the hierarchy.
 This will be useful for the proof of Lemma~\ref{lemma3.1}, to avoid a dependence issue which comes from the fact that we do not start with a fully active configuration.
For a diagram summarizing the effect of the sleep mask, we refer to Figure~1 in~\cite{AFG}.

 Given this toppling strategy~$f$ and this sleep mask~$g^\mathbf{j}$, for
every~$j\leq L$ and every~$C\in\cC_j$, we denote the total number of
steps (sleeps or loops) performed by the distinguished vertex of~$C$
during the stabilisation of~$C$, starting with the configuration~$U_0$, by
 
 \[
 H^*(C,\,f,\,g^\mathbf{j},\,U_0)
 \ =\ 
 \big|\big\{t<T(C,\,f,\,g^\mathbf{j},\,U_0)\,:\,  f(U_0,\,\dots,\,U_t)=x_C^\star\big\}\big|
 \,,
 \]
where~$(U_t)_{t\geq 0}$ is
the~$\mathrm{ARWD}(\lambda,\,A,\,f,\,g^\mathbf{j},\,U_0)$ process
and~$T(C,\,f,\,g^\mathbf{j},\,U_0)$ is  as defined in Section~\ref{sec:def-ARWD-bis}.
Note that, with the toppling strategy that we constructed,~$T(C,\,f,\,g^\mathbf{j},\,U_0)$ is the first time when the configuration is stable in the set~$C$, so that it is really the ``stabilisation time'' of~$C$.

\subsection{Initialization: stabilisation of an elementary cluster}

We now show that, for every cluster of the first level of the hierarchy constructed in Section~\ref{section-hierarchy}, the number of steps made at the distinguished vertex is exponentially large in the size of this cluster.

\begin{lemma}\label{lemma3.4}
    Assume that~$d=2$ and let~$\lambda> 0,\,\mu\in(0,1)$ and~$\r_0\in(0,1]$.
     There exists~$v_0\in\N$ such that for every integer~$v\geq v_0$
and every sequence~$D \in
\N^{\N_0}$, writing \smash{$r=2\lfloor\sqrt{2 v/\mu}\rfloor$}, for
every~$n \geq 1$, if~$A \subset \Z_n^2$ and~$(A_j , \cC_j )_{j\leq L}$
is a~$(v,D)$-dormitory hierarchy on~$A$ such that every set~$C \in
\cC_0$ is~$r$-connected, then there
exists a collection of~$C$-toppling procedures~$p_C$ for
each~$C\in\cC_0$ such that, for any fixed~$\mathbf{j}\in\Sigma$, with the toppling strategy~$f$ and the
sleep mask~$g^\mathbf{j}$ defined in Section~\ref{section-def-f-g}, for
every~$C\in\cC_0$ and~$\rho\in[\rho_0,1]$, starting from a random initial configuration~$U_0$ with distribution~$\cP_{C,\rho}$, the variable~$1+H^*(C,\,f,\,g^\mathbf{j},\,U_0)$ dominates a geometric variable with parameter \smash{$\exp\big(-|C|/\sqrt{v}\big)$}. 
\end{lemma}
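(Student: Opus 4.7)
The plan is to adapt Proposition~6.1 of \cite{AFG}, which proves the analogous elementary-cluster estimate for the ARW dynamics, to the ARWD dynamics with sleep mask $g^\mathbf{j}$. The geometric input, Lemma~\ref{lemmahittingprob} in dimension two, is identical; the main work lies in re-examining the coupling with a biased random walk to account for two features of ARWD that differ from ARW: the particle falls asleep at the end of each loop (rather than remaining active), and, because of the sleep mask, loops issued from the distinguished vertex $x_C^\star$ cannot reactivate sleeping particles of $C$---only loops from non-distinguished sites of $C$ can.

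I would first construct $p_C$ by setting $p_C(U)=x_C^\star$ whenever $x_C^\star\in U$ and otherwise taking $p_C(U)$ to be the minimum element of $U\setminus\{x_C^\star\}$ with respect to a fixed ordering that cycles through $C$ along $r$-geodesic steps from $x_C^\star$; such an ordering exists because $C$ is $r$-connected with $r=2\lfloor\sqrt{2v/\mu}\rfloor$. I would then track the number $N_t$ of active particles of $C$. Between two consecutive topplings of $x_C^\star$, the procedure picks non-distinguished sites $y\in C$, for which case~4 of the sleep mask definition applies regardless of $j_t$, so that the mask equals $C_0(y)=C$ and loops from $y$ can reactivate any sleeping particle of $C$ they visit. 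Applying Lemma~\ref{lemmahittingprob} in dimension two along an $r$-connected path from $y$ to $x_C^\star$, such a loop reactivates $x_C^\star$, whenever it is asleep, with probability bounded below by a positive power of $K/\ln r$.

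The heart of the argument is a ballot-type inequality: after identifying the toppling dynamics with a biased random walk on $\{0,\ldots,|C|\}$ whose upward jumps (sleep events) occur with probability $\lambda/(1+\lambda)$ and whose downward jumps (net wake-ups during a loop) have success probability of order $K/\ln r$, a standard exit-time estimate shows that this walk stays in the bulk for a number of steps stochastically dominating a geometric variable with parameter $\exp(-c|C|/\sqrt v)$, for some $c=c(\lambda,\mu,\rho_0)>0$, as soon as $v\geq v_0$. Each loop from a non-distinguished vertex that wakes up $x_C^\star$ contributes one count to $H^\star(C,f,g^\mathbf{j},U_0)$, so the count inherits the same geometric lower bound.

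The main obstacle, relative to \cite{AFG}, is that the wake-up mechanism is now carried by non-distinguished vertices rather than by $x_C^\star$ itself, so one must verify that the priority given to $x_C^\star$ in $p_C$ still leaves enough toppling events at non-distinguished sites to fuel the ballot process. A secondary difficulty is the Bernoulli$(\rho)$ initial configuration: unlike in \cite{AFG}, not all particles start active, so one must argue that with probability bounded away from zero the density of initially active particles near $x_C^\star$ is bounded below by a function of $\rho_0$, which follows from a standard large-deviation estimate on the initial configuration. Independence of the colours $(J_t)$ is not needed at this elementary level, since all non-distinguished loops have the same sleep mask regardless of $J_t$; the $J_t$ play a substantial role only at higher levels of the hierarchy.
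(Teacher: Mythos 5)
Your proposal correctly identifies the two main modifications relative to Proposition~6.1 of \cite{AFG} (particles fall asleep at the end of loops; a Bernoulli$(\rho)$ rather than fully-active start) and correctly observes that only topplings at non-distinguished sites can reactivate the distinguished vertex, via case~4 of the sleep mask. However, there is a gap in the construction of the toppling procedure~$p_C$ and, as a consequence, in the drift computation for your biased-random-walk comparison.

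The paper's proof does not take~$p_C(U)$ to be simply the minimum of~$U\setminus\{x_C^\star\}$ in some cyclic $r$-geodesic ordering. Instead it invokes Lemma~6.1 of \cite{AFG} to produce a function~$\pi_C$ with the key \emph{quantitative} property that, whenever~$0<|U|\leq\beta|C|$, the chosen site~$\pi_C(U)$ has at least~$\lceil(1-\beta)v\rceil$ \emph{sleeping} sites of~$C$ within distance~$16vr$. This is what makes the supermartingale (equivalently, your ``ballot-type'') argument close: each loop then wakes up at least~$\Upsilon_2(16vr)\cdot(1-\beta)v$ particles in expectation (via Lemma~6.3 of \cite{AFG}), a quantity that grows like~$v/\ln v$ and therefore exceeds~$1+\lambda$ once~$v$ is large. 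Your construction of~$p_C$ offers no such guarantee---the minimum element of a fixed cyclic ordering may be isolated from all sleeping sites---and your drift estimate reflects this: you credit each loop with a single wake-up of probability~$K/\ln r$, which yields~$E[\text{woken up}]\ll 1+\lambda$ and hence a drift \emph{towards} stabilisation, the opposite of what is needed. The geometric lower bound~$\exp(-c|C|/\sqrt v)$ would then not follow; in fact the cluster would stabilise in~$O(|C|)$ steps. A secondary imprecision: the hitting estimate for reactivating~$x_C^\star$ from a generic~$y\in C$ should involve~$\Upsilon_2(r|C|)\sim K/\ln(r|C|)$, not~$K/\ln r$, since~$\diam C$ can be as large as~$r|C|$. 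The paper also needs a separate, explicit sub-argument (the event~$\cE$, with deterministic time~$t_0=|C|^2$) ensuring that~$x_C^\star$ is actually reactivated between consecutive excursions to~$\cB$, so that visits to~$\cB$ translate into steps counted by~$H^\star$; in your write-up this is elided. The treatment of the random initial configuration via a large-deviation bound is correct and matches the paper's use of Hoeffding's inequality.
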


This lemma is an analog of Lemma 3.4 of \cite{AFG}, and our proof follows the same strategy, but with some differences.
In particular, we start with a random initial configuration instead of starting fully active, and after each loop, the particle falls asleep.
Also, the stochastic domination concerns the total number of steps
performed by the distinguished vertex, whereas in~\cite{AFG} we needed
an estimate on the number of loops of color~$0$.

Note that, in fact, for~$C\in\cC_0$ the variable~$H^*(C,\,f,\,g^J,\,U_0)$ is independent of~$J$, because the sleep mask that we defined has no influence on what happens inside a cluster at the first level of the hierarchy during the stabilization of this cluster.
This is why Lemma~\ref{lemma3.4} is formulated in a ``quenched'' way, for any fixed realization~$\mathbf{j}=(j_t)_{t\geq 0}\in\Sigma$ of the random sequence~$J$.

\subsubsection{The metastability phenomenon}

We first show the following result, which is the analog of Lemma 6.2 in \cite{AFG}.
The difference with respect to~\cite{AFG} lies in the fact that, after
each loop, the particle which performed the loop falls asleep.
This results in a slight change in the
assumption~\eqref{assumption-drift} below, but is in fact harmless.
For completeness, we include nevertheless the proof.
As in~\cite{AFG}, we use the convention
that~$\Upsilon_d(\infty)=\lim_{r\to\infty}\Upsilon_d(r)$ (we also
use this lemma with~$r=\infty$ to deal with the transient case).

\begin{lemma}\label{lemma6.2}
 Let~$d\geq 1,\,\lambda>0,\,\alpha>0,\,\beta\in(0,1),\,v\in\N,\,r\in\N\cup\{\infty\},\,D\in\N^{\N_0}$ and assume that
 \begin{equation}
\label{assumption-drift}
 (1+\lambda)(1-e^{-\alpha})
 \leq
 \Upsilon_d(16vr)
 \big(1-e^{-\alpha(1-\beta)v}\big)
 \,.
 \end{equation}
 Let~$n\in\N$ and~$A \subset \Z_n^d$, let~$(A_j , \cC_j )_{j\leq L}$
be a~$(v,D)$-dormitory hierarchy on~$A$ such that every set~$C \in
\cC_0$ is~$r$-connected, then there
exists a collection of~$C$-toppling procedures~$p_C$ for
each~$C\in\cC_0$ such that, for any~$\mathbf{j}\in\Sigma$, with the toppling strategy~$f$ and the sleep mask~$g^\mathbf{j}$ defined in Section~\ref{section-def-f-g}, for every~$C\in\cC_0$ and every fixed subset~$U_0\subset C$ with~$|U_0|=\lfloor \beta|C|\rfloor$,
 denoting by~$(U_t)_{t\geq 0}$
the~$\mathrm{ARWD}(\lambda,\,C,\,f,\,g^\mathbf{j},\,U_0)$ process and defining
the stopping time~$\tau=\inf\{t\geq 0\,:\,|U_t|>\beta|C|\}$, it holds
that 
\[
\bbP(\tau=\infty)
\leq
e^{-\alpha(\beta|C|-2)}
\,,
\]
that is to say, with probability at least~$1-e^{-\alpha(\beta|C|-2)}$,
the proportion of active sites in~$C$ exceeds~$\beta$ before
hitting~$0$, i.e., before the
stabilisation of~$C$.
\end{lemma}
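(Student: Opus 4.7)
The strategy is to construct an exponential supermartingale and apply optional stopping. Set $M_t := e^{-\alpha |U_{t\wedge\sigma}|}$, where $\sigma := \inf\{t\geq 0 : |U_t|=0 \text{ or } |U_t|>\beta|C|\}$. Since $M_\sigma \geq \mathbbm{1}_{\{|U_\sigma|=0\}}$ and $M_0 \leq e^{-\alpha(\beta|C|-1)}$, if $M$ (or a phase-averaged variant, see below) is a supermartingale then optional stopping yields
\[
\bbP(\tau=\infty)\,=\,\bbP(|U_\sigma|=0)\,\leq\,\bbE[M_\sigma]\,\leq\,M_0\,\leq\,e^{-\alpha(\beta|C|-2)}.
\]

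The heart of the proof is the one-step drift analysis. Because we work in $\cS_C$ (one particle on each site of $C$), the ``empty-site'' set $B$ in the definition of the walk reduces to $\{x\}$, so every walk is a loop from $x$ back to $x$ after which the walking particle settles asleep at $x$. Writing $K_t$ for the number of sleeping sites of $C$ reactivated during this loop (among those allowed by the mask $g^{\mathbf{j}}$), the increment at an active site $x\in U_t$ is $-1$ with probability $\lambda/(1+\lambda)$ (sleep event) and $-1+K_t$ with probability $1/(1+\lambda)$ (walk event). A short computation rewrites the supermartingale inequality $\bbE[e^{-\alpha\Delta|U_t|}\mid\cF_t]\leq 1$ as
\[
1-\bbE\!\left[e^{-\alpha K_t}\,\big|\,\cF_t\right]\,\geq\,(1+\lambda)(1-e^{-\alpha}),
\]
and using $1-e^{-\alpha K_t}\geq(1-e^{-\alpha(1-\beta)v})\mathbbm{1}_{\{K_t\geq(1-\beta)v\}}$ together with the hypothesis~\eqref{assumption-drift}, the whole supermartingale property is reduced to the geometric estimate
\[
\bbP\!\left(K_t\geq(1-\beta)v\,\big|\,\cF_t\right)\,\geq\,\Upsilon_d(16vr).
\]

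The main obstacle is constructing the $C$-toppling procedures $p_C$ so that this probabilistic lower bound holds uniformly in $t<\sigma$. Since $|U_t|\leq\beta|C|$ leaves at least $(1-\beta)|C|\geq(1-\beta)v$ sleeping sites inside $C$, and $C$ is $r$-connected of size at least $v$, one can design $p_C$ so that the selected active site $x$ always admits a target sleeping site $y\in C$ at torus-distance at most $16vr$ whose approach from $x$ must sweep through at least $(1-\beta)v$ sleeping sites of $C$ — for instance by fixing once and for all a ``snake'' spanning ordering of $C$ and letting $p_C$ pick the active endpoint of the longest trailing stretch of sleeping sites along this snake. When $x$ is non-distinguished, the sleep mask $g^{\mathbf{j}}$ evaluates to $C_0(x)=C$, so all intended awakenings really occur, and the definition of $\Upsilon_d$ supplies the required lower bound on the probability that the SRW loop reaches $y$ before returning to $x$. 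A genuine subtlety is the contribution of steps at $x_C^\star$: by inspection of $g^{\mathbf{j}}$ the sleep mask there is either empty or disjoint from $C$, forcing $K_t=0$ and making the one-step supermartingale inequality fail at such steps. The remedy is an amortisation argument: between any two consecutive topples of $x_C^\star$, the priority rule imposes that at least one non-distinguished walk has reactivated $x_C^\star$ (contributing $+1$ to $|U|$), cancelling the $-1$ cost of the $x_C^\star$-toppling itself, and the residual drift over the interval is governed by the other non-distinguished topples to which the calculation above applies. Almost-sure finiteness of $\sigma$, needed to apply optional stopping, is routine: every configuration with $|U_t|\in(0,\beta|C|]$ admits a pure sleep event with probability at least $\lambda/(1+\lambda)$, preventing $|U_t|$ from remaining forever in a bounded non-absorbing range.
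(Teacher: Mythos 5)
Your overall structure (exponential supermartingale followed by optional stopping) matches the paper, and your identification of the one-step obstruction at the distinguished vertex $x_C^\star$ is correct. However, the proposal has two genuine gaps.

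First, your supermartingale is $M_t = e^{-\alpha|U_{t\wedge\sigma}|}$, which, as you note, fails the one-step drift inequality when $x_C^\star$ topples (the mask forces $K_t = 0$ there, so $|U|$ drops by one deterministically). Your ``amortisation'' fix over phases between consecutive topples of $x_C^\star$ is not carried out: the exponential weights increments nonlinearly, a phase-averaged process requires a new filtration and optional-stopping argument, and the one unit of credit from the awakening of $x_C^\star$ has already been spent in the drift budget of the walk that awakened it. The paper avoids all of this by tracking $N_t = |U_t \setminus \{x_C^\star\}|$ and setting $M_t = e^{-\alpha N_t}$: now a topple of $x_C^\star$ leaves $N_t$ unchanged (it is never counted, asleep or awake) and the supermartingale inequality holds at every step, at the cost of weakening $\beta|C|-1$ to $\beta|C|-2$ in the final estimate, exactly as in the statement of the lemma.

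Second, and more seriously, your reduction to the estimate $\bbP(K_t \geq (1-\beta)v \mid \cF_t) \geq \Upsilon_d(16vr)$ and the proposed snake construction to prove it are not right. The quantity $\Upsilon_d(16vr)$ is a \emph{single-target} loop-hitting probability; it does not bound the probability that the loop sweeps through $(1-\beta)v$ prescribed sites before returning to $x$. Your argument implicitly assumes the walk must traverse the snake from $x$ to $y$, but a simple random walk on the torus takes an arbitrary trajectory and has no reason to visit the intermediate sites of your ordering. Moreover the indicator bound $1-e^{-\alpha K_t} \geq (1-e^{-\alpha(1-\beta)v})\mathbbm{1}_{\{K_t \geq (1-\beta)v\}}$ is too lossy to produce the needed inequality from what is actually available. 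The paper instead invokes Lemma 6.1 of~\cite{AFG}, which supplies a function $\pi_C$ picking an active site around which the ball of radius $16vr$ contains at least $(1-\beta)v$ sleeping sites, and then Lemma 6.3 of~\cite{AFG}, which gives the sharper moment bound $\bbE[e^{-\alpha(Y_1+\cdots+Y_k)}] \leq 1 - \Upsilon_d(16vr)(1-e^{-\alpha k})$ for (possibly dependent) Bernoulli indicators $Y_i$ each with mean $\geq \Upsilon_d(16vr)$. This is what makes the drift close, and neither step is a consequence of your construction.
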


\begin{proof}
Let~$d,\,\lambda,\,\alpha,\,\beta,\,v,\,r,\,D,\,n,\,A, \mathbf{j}$
and~$(A_j,\,\cC_j)_{j\leq L}$ be as in the statement.
For every~$C\in\cC_0$, Lemma 6.1 of \cite{AFG} (applied with~$vr$ instead of~$r$) tells us that there exists a function~$\pi_C : \cP(C) \setminus \{\varnothing\}\ \to C$ such that, for every~$U \subset C$ with~$0 < |U| \leq \beta|C|$, we have~$\pi_C(U) \in U$ and 
\begin{equation}\label{eq6.4}
\big|(C \setminus U) \cap B\big(\pi_C(U),\,16vr\big)\big|
\geq
(1-\beta)\,v
\,.
\end{equation}
We turn these functions~$\pi_C$ into~$C$-toppling procedures~$p_C$ by writing, for every~$U\in\cP(C)\setminus\{\varnothing\}$,
\begin{equation*}
  p_C(U)
  =
  \begin{cases}
  x_C^\star
  &\text{if } x_C^\star \in U\,,\\
  
  \pi_C(U)
  &\text{otherwise.}
  \end{cases}
\end{equation*}
Then, we let~$f$ and~$g^\mathbf{j}$ be the toppling strategy and sleep mask derived from these toppling procedures, as explained in Section~\ref{section-def-f-g}.

We now fix a particular cluster~$C\in\cC_0$, we let~$U_0\subset C$ be
such that~$|U_0|=\lfloor \beta|C|\rfloor$, we denote by~$(U_t)_{t\geq
0}$ the~$\mathrm{ARWD}(\lambda,\,C,\,f,\,g^\mathbf{j},\,U_0)$ process and we
let~$\tau$ be the stopping time defined in the statement.
Then, for every~$t\geq 0$, we consider
\begin{equation*}
    M_t=e^{-\alpha N_t}
    \qquad\text{where}\qquad
    N_t=\big|U_t\setminus\{x^\star_C\}\big|
\end{equation*}
and we show that the stopped process~$(M_{t\wedge \tau})_{t\geq 0}$ is a supermartingale with respect to the natural filtration~$(\cF_t)_{t\geq 0}$ of~$(U_t)_{t\geq 0}$.
On the one hand, on the event~$\{t\geq\tau\}\cup\{x_C^\star \in U_t \cap j_t\}$, we have~$M_{(t+1)\wedge\tau}=M_{t\wedge\tau}$, because our toppling procedure~$p_C$ chooses the distinguished particle in priority, and this particle is not allowed to wake up anyone in~$C$, by definition of the sleep mask~$g$.

On the other hand, on the event~$\{t<\tau\}\cap\{x_C^\star\notin U_t \cap j_t\}$, recalling that at each step the chosen particle falls asleep with probability~$\lambda/(1+\lambda)$ and makes a loop otherwise, falling asleep at the end of its loop, and noting that, on this event, our assumption~\eqref{eq6.4} ensures that there are at least~$k=\lceil(1-\beta)v\rceil$ sleeping particles in the ball of radius~$16vr$ around the chosen particle,
we may write, on this event,
\begin{multline*}
\bbE\big[M_{(t+1)\wedge \tau}\,\big|\,\cF_t\big]
=
\bbE\big[M_{t+1}\,\big|\,\cF_t\big]
=
M_t\,
\bbE\Big[e^{-\alpha(N_{t+1}-N_t)}\,\Big|\,\cF_t\Big]
\\
\leq
M_t\,e^\alpha
\bigg(
\frac{\lambda}{1+\lambda}
+\frac{1}{1+\lambda}\,
\bbE\Big[e^{-\alpha(Y_1+\cdots+Y_k)}\Big]
\bigg)
\,,
\numberthis\label{computation-martingale}
\end{multline*}
where the variables~$Y_1,\,\dots,\,Y_k$ are Bernoulli with parameter~$\Upsilon_d(16vr)$, but non necessarily independent.
Then, using Lemma 6.3 of \cite{AFG}, we have
\[
\bbE\Big[e^{-\alpha(Y_1+\cdots+Y_k)}\Big]
\leq
1-\Upsilon_d(16vr)\big(1-e^{-\alpha k}\big)
\leq
1-\Upsilon_d(16vr)\big(1-e^{-\alpha(1-\beta)v}\big)
\,.
\]
Using our assumption~\eqref{assumption-drift}, this becomes
\[
\bbE\big[e^{-\alpha(Y_1+\cdots+Y_k)}\big]
\leq
1-(1+\lambda)(1-e^{-\alpha})
\,.
\]
Plugging this into our computation~\eqref{computation-martingale}, we deduce that, still on the event~$\{t<\tau\}\cap\{x_C^\star\notin U_t\}$,
\[
\bbE\big[M_{(t+1)\wedge \tau}\,\big|\,\cF_t\big]
\leq
M_t\,e^\alpha
\bigg(
\frac{\lambda}{1+\lambda}
+\frac{1}{1+\lambda}
-(1-e^{-\alpha})
\bigg)
=
M_t
=
M_{t\wedge\tau}
\,,
\]
which concludes the proof that~$(M_{t\wedge \tau})_{t\geq 0}$ is a supermartingale.

Hence, using Doob's optional stopping theorem, we deduce that
\[
\bbP(\tau=\infty)
=
\bbP\big(T(C,\,f,\,g^\mathbf{j},\,U_0)<\tau\big)
\leq
\bbE[M_{\tau\wedge T(C,\,f,\,g^\mathbf{j},\,U_0)}]
\leq
M_0
\leq
e^{-\alpha(\beta|C|-2)}
\,,
\]
which concludes the proof of the lemma.
\end{proof}

\subsubsection{Consequence on the number of steps performed by the distinguished particle}

We are now ready to show Lemma~\ref{lemma3.4}.

\begin{proof}[Proof of Lemma~\ref{lemma3.4}]
Let~$d=2,\,\lambda>0,\,\mu\in(0,1),\, \r_0\in(0,1]$ and~$\mathbf{j} \in \Sigma$ as in the statement.
Let~$\beta=\r_0/2$ and let~$K$ be the constant given by Lemma~\ref{lemmahittingprob}. 
We claim that, for~$v$ large enough, we have
\begin{equation}
    \label{drift-for-v}
    (1+\lambda)\bigg[1-\exp\bigg(-\frac {4}{\beta\sqrt{v}}\bigg)\bigg]
    \leq
\frac{K}{\ln\big(64v^{3/2}/\sqrt{\mu}\big)}
    \bigg[1-\exp\bigg(-\frac{4(1-\beta)\sqrt{v}}{\beta}\bigg)\bigg]
    \,.
\end{equation}
Indeed, when~$v\to\infty$ the left-hand side of~\eqref{drift-for-v} is equivalent to~$4(1+\lambda)/(\beta\sqrt{v})$, while the right-hand side is equivalent to~$2K/(3\ln v)$.
Hence, we can fix~$v_1\in\N$ such that~\eqref{drift-for-v} holds for all~$v\geq v_1$.

Let also~$v_2\in\N$ be such that, for every~$v\geq v_2$,
\begin{equation}
    \label{inequality-v2}
    8v^2\,e^{-\sqrt{v}}\leq 1
    \qquad\text{and}\qquad
\bigg(1-\frac K{(1+\lambda)\ln\big(2v^{3/2}\sqrt{2/\mu}\big)}\bigg)^{v^2}
\leq
\frac{e^{-v}}{4}
\,.
\end{equation}

We then let~$v_0=v_1\vee v_2\vee \lceil 1/\beta^4\rceil$, we
take~$v\geq v_0,\,D\in\N^{\N_0}$ and \smash{$r=2\lfloor\sqrt{2
v/\mu}\rfloor$} and we let~$n \geq 1$ and we consider a set~$A \subset \Z_n^2$ equipped with a~$(v,D)$-dormitory hierarchy~$(A_j,\cC_j)_{j\leq L}$ such that every set~$C \in \cC_0$ is~$r$-connected.

Then, the inequality~\eqref{drift-for-v} allows us to use Lemma~\ref{lemma6.2} with~$\alpha=4/(\beta\sqrt{v})$.
Let~$(p_C)_{C\in\cC_0}$ be the collection of toppling procedures given by this lemma, and let~$f$ and~$g^{\mathbf{j}}$ be the toppling strategy and sleep mask derived from this collection, as detailed in Section~\ref{section-def-f-g}.

Let~$C\in\cC_0$.
As in \cite{AFG} we consider the set
\[
\cB
=
\big\{U\subset C\,:\,|U|>\beta|C|\big\}
\]
and its so-called boundary, defined as
\[
\partial\cB
=
\Big\{U\subset C\setminus\{x^\star_C\}\,:\,|U|=\big\lfloor\beta|C|\big\rfloor\Big\}
\,.
\]
Let us fix a deterministic initial configuration~$U_0\in \partial\cB$, and denote by~$(U_t)_{t\geq 0}$ the~$\mathrm{ARWD}(\lambda,\,C,\,f,\,g^\mathbf{j},\,U_0)$ process.
By Lemma~\ref{lemma6.2}, starting from this configuration~$U_0$, the
probability that the process stabilises without ever visiting~$\cB$
is~$\bbP(\tau=\infty)
\leq
e^{-\alpha(\beta|C|-2)}$.
Using that~$\beta|C|\geq\beta v\geq
1/\beta^3\geq 8\geq 4$, we may deduce that
\begin{equation}
\label{proba-tau-infty}
\bbP(\tau=\infty)
\leq
e^{-\alpha\beta|C|/2}
\leq
e^{-2|C|/\sqrt{v}}
\,,
\end{equation}
Furthermore, note that, since a~$C$-toppling procedure only depends on the current configuration, the process~$(U_t)_{t\geq 0}$ is in fact a Markov chain.
Thus, since~\eqref{proba-tau-infty} holds for all~$U_0\in\partial\cB$, denoting by~$\cN_\cB$ the number of times that the process enters~$\cB$ (from a configuration outside this set) before the complete stabilisation of~$C$, the variable~$1+\cN_{\cB}$ dominates a geometric variable with parameter~$e^{-2|C|/\sqrt{v}}$.

Still starting from a fixed~$U_0\in \partial\cB$, let us introduce two random times: first,
\begin{equation*}
    T_\star=\inf\big\{t \in \N_0\,:\,x_C^\star \in U_t \big\}
\end{equation*}
recalling that~$x_C^\star \notin U_0$ by definition of~$\partial \cB$ and, for~$t \in \N_0$, 
\begin{equation*}
    T^t_{\cB} =\inf\big\{t'\geq t\,:\,U_{t'} \in \cB\big\}
    \,.
\end{equation*}
We also define a deterministic time~$t_0=|C|^2$, and we consider the event
\begin{equation*}
    \cE
    =\big\{T_\star\leq t_0\big\}
    \cap 
    \big\{T_{\cB}^{T_\star}<\infty\big\}
    \,.
\end{equation*}
We then control the probability of this event~$\cE$ as in Section 6.3 of \cite{AFG}, obtaining
\begin{align*}
\bbP\big(\cE^c\big)
&\leq
2t_0\,e^{-2|C|/\sqrt{v}}
+
\bigg(1-\frac{\Upsilon_2\big(r|C|\big)}{1+\lambda}\bigg)^{t_0}
\\
&\leq
2|C|^2\,e^{-2|C|/\sqrt{v}}
+
\bigg(1-\frac{K}{(1+\lambda)\ln\big(2|C|\sqrt{2v/\mu}\big)}\bigg)^{|C|^2}
\\
&\leq
2|C|^2\,e^{-\sqrt{|C|}}\,
e^{-|C|/\sqrt{v}}
+
\bigg(1-\frac{K}{(1+\lambda)\ln\big(2|C|^{3/2}\sqrt{2/\mu}\big)}\bigg)^{|C|^2}
\,,
\end{align*}
where in the last inequality we used that~$|C|\geq v$.
Then, since~$|C|\geq v\geq v_2$ we may use the two inequalities in~\eqref{inequality-v2} to get
\[
\bbP\big(\cE^c\big)
\leq
\frac {e^{-|C|/\sqrt{v}}} 4
+
\frac {e^{-|C|}} 4
\leq
\frac {e^{-|C|/\sqrt{v}}} 2
\,.
\]

If this event~$\cE$ is realized, it means that, starting
from~$U_0 \in \partial \cB$, the distinguished particle is awakened within time~$t_0$, after which we reach the set~$\cB$.
As explained above, when we exit~$\cB$, we necessarily end up on a configuration in~$\partial \cB$.

Thus, since the bound on the probability of~$\cE^c$ is uniform over all~$U_0\in\partial\cB$, we deduce that, starting from any configuration in~$\partial\cB$, one plus the number of times that the distinguished vertex is waken up dominates a geometric variable with parameter~$(e^{-|C|/\sqrt{v}})/2$.

Since each time that the distinguished vertex is waken up, it is toppled at the next step, we deduce that, still starting from any fixed configuration~$U_0\in\partial\cB$, the variable~$1+H^\star(C,\,f,\,g^\mathbf{j},\,U_0)$ dominates a geometric variable with parameter~$(e^{-|C|/\sqrt{v}})/2$.

There now remains to go back to the case of a random initial configuration~$U_0$ with distribution~$\cP_{C,\rho}$, that is to say, with i.i.d.\ particles on~$C$, each unstable with probability~$\rho$, where~$\rho\geq \rho_0$.
In this setting, Hoeffding's inequality entails that
\[
\cP_{C,\rho}
\big(U_0\notin\cB\big)
=
\cP_{C,\rho}
\bigg(|U_0|\leq\frac{\rho}{2}\,|C|\bigg)
\leq
e^{-\rho^2|C|/2}
\leq
e^{-2|C|/\sqrt{v}}
\leq
\frac{e^{-|C|/\sqrt{v}}}{2}
\,,
\]
where in the second inequality we used our assumption that~$v\geq
1/\beta^4\geq 16/\rho^4$ and in the last inequality we used that~$\sqrt{v}\geq\ln 2$.

Now recall that if~$U_0\in\cB$, then the process will visit at some time the boundary set~$\partial\cB$.
Thus, what we showed above implies that, conditioned on the event that~$U_0\in\cB$, the variable~$1+H^\star(C,\,f,\,g^\mathbf{j},\,U_0)$ dominates a geometric variable with parameter~$(e^{-|C|/\sqrt{v}})/2$.

Therefore, without this conditioning, writing
\[
H^\star(C,\,f,\,g^\mathbf{j},\,U_0)
\geq
\mathbbm{1}_{\{U_0\in\cB\}}
H^\star(C,\,f,\, g^\mathbf{j},\,U_0)
\]
and using that if~$X$ is a Bernoulli variable with parameter~$1-a$
and~$G$ is a geometric variable with parameter~$b$, independent
of~$X$, then~$1+X(G-1)$ dominates a geometric
variable with parameter~$a+b-ab\leq a+b$, we conclude that the variable~$1+H^\star(C,\,f,\,g^\mathbf{j},\,U_0)$ dominates a geometric with parameter
\[
\cP_{C,\rho}
\big(U_0\notin\cB\big)
+\frac{e^{-|C|/\sqrt{v}}} 2
\leq
e^{-|C|/\sqrt{v}}
\,,
\]
which concludes the proof.
\end{proof}

\subsection{The induction step: two clusters playing ping-pong}

We now go back to considering a random sequence~$J$ and no longer reason with a fixed realisation~$\mathbf{j}$ of this variable.
Recall that for every cluster~$C$ of our hierarchy we denote by~$H^\star(C,\,f,\,g^J,\,U_0)$ the number of steps performed by the distinguished vertex~$x^\star_C$ during the stabilisation of~$C$ starting from an initial configuration~$U_0\subset C$.

\textbf{The induction hypothesis~$\cP(j)$:}  
Let~$\r_0\in(0,1]$ and let~$(\alpha_j)_{j\in \N_0}$ be a sequence of positive real numbers, to be
 chosen later.
 Our induction hypothesis, written~$\cP(j)$, is the following:
 for every~$j \in \{0,\,\dots,\,J\}$, we define
 \begin{equation}
 \label{induction-hyp}
 \cP(j)\ :\qquad
     \forall \,C \in \cC_j 
     \quad
     \forall\rho\in[\rho_0,1]
     \qquad
     U_0\sim\cP_{C,\,\r}
     \quad\Longrightarrow\quad
     1+H^\star(C,\,f,\,g^J,\,U_0)\succeq \geom\big(e^{-\alpha_j|C|}\big)
     \,.
 \end{equation}

Note that, the ARWD model being a priori non-abelian, we have no monotonicity property with respect to the initial distribution at our disposal.
This is why the induction hypothesis is stated for every~$\rho\geq\rho_0$ instead of just taking~$U_0$ with distribution~$\cP_{C,\,\r_0}$.
In practice, at each level of the hierarchy, we will only use the cases~$\rho=\rho_0$ (during the first ping-pong round) and~$\rho=1$ (for the successive ping-pong rallies).

Note that Lemma~\ref{lemma3.4} showed~$\cP(0)$, under certain hypotheses.

Here is now the induction step of our proof, which is adapted from Lemma 3.1 of \cite{AFG}.
The constant~$\bar p$ is a universal constant which comes from an elementary property of geometric sums of Bernoulli variables, given by Lemma~\ref{lemmabernoulligeom} below.

\begin{lemma}\label{lemma3.1}
    Let~$\bar p\in(0,1)$ be given by Lemma~\ref{lemmabernoulligeom}.
    Let~$d\geq 1,\,\r_0\in(0,1],\,v \geq 1,\,(D_j)_{j\geq 0} \in
\N^{\N_0}$ and let~$(\alpha_j)_{j \geq 0}$ be a non-increasing
sequence of positive real numbers and~$(p_j)_{j\geq 0}\in(0,1)^{\N_0}$
be such that for every~$j \geq 0$,
    \begin{equation}
        \label{assumption-pj}
        p_j
        \leq
        \bar p\wedge \frac 1 {2^{j+1}(1+\lambda)} \wedge \big(32\alpha_j(D_j+1)^d\big)^{-1/2}
    \end{equation}
    and
\begin{equation}\label{eqassumption3.1}
    2^{j/2+2}v
    \leq
    p_j^{6}\,\Upsilon_d(D_j)
    \,\exp\big((\alpha_j -\alpha_{j+1})2^{j/2}v\big)
    \,.
\end{equation}

   For every~$n \geq 1$ and every~$A \subset \Z^d_n$ equipped with a~$(v, D)$-dormitory hierarchy~$(A_j, \cC_j)_{j\leq L}$ and
 with a collection of toppling procedures~$(p_C)_{C\in \cC_0}$, with the toppling strategy~$f$ and sleep mask~$g^J$ defined in Section~\ref{section-def-f-g}, if the property~$\cP(0)$ holds, then~$\cP(j)$ also holds
 for every~$j \leq L$.
\end{lemma}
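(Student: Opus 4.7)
The plan is to induct on $j$, with the base case $\cP(0)$ granted. Assuming $\cP(j)$ for some $0 \leq j < L$, I prove $\cP(j+1)$ cluster by cluster. If $C \in \cC_{j+1} \cap \cC_j$, then the toppling strategy $f_C$ and the restriction of the sleep mask to $C$ are unchanged, so $\cP(j+1)$ for $C$ follows from $\cP(j)$ together with the monotonicity $\alpha_{j+1} \leq \alpha_j$. The substantial case is $C = C_0 \cup C_1$ with $C_0, C_1 \in \cC_j$ and, without loss of generality, $|C_0| \geq |C_1|$, so that $x_C^\star = x_{C_0}^\star$.

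In the substantial case $f_C$ implements the ping-pong rally: stabilise $C_0$ first, then $C_1$, and iterate until both are simultaneously stable. The key mechanism is the sleep mask $g^J$: at each toppling of the distinguished vertex of either half, with probability at least $p_j$ (linked to the probability that the independent color $J_t$ selects level $j$ and the particle walks rather than sleeps) the mask permits the resulting loop to reactivate sleeping particles in the sibling cluster. Since $\diam C \leq D_j$, any such loop hits a prescribed target site with probability at least $\Upsilon_d(D_j)$ by Lemma~\ref{lemmahittingprob}. Crucially, the first-case exception in the definition of $g^J$ in Section~\ref{sec-def-g} blocks this exchange during the very first stabilisation of $C_0$, so that $\cP(j)$ can be applied to that first stabilisation independently of the random initial state on $C_1$.

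Concretely the argument would proceed in three steps. First, by $\cP(j)$ applied to the initial density $\geq \rho_0$ on $C_0$, the distinguished vertex $x_{C_0}^\star$ is toppled at least $G_0 \succeq \geom(e^{-\alpha_j|C_0|})$ times during the initial stabilisation of $C_0$. Second, among these $G_0$ topplings, the number of color-$j$ walk events dominates a geometric variable via Lemma~\ref{lemmabernoulligeom} combined with the first two bounds in~\eqref{assumption-pj}; each such event reaches a given sleeping site of $C_1$ with probability at least $\Upsilon_d(D_j)/(1+\lambda)$, and a second-moment concentration estimate controlled by the third bound in~\eqref{assumption-pj} shows that with overwhelming probability at least $\rho_0|C_1|$ sites of $C_1$ are active when the next phase begins. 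Third, a symmetric argument applies to every subsequent half-round, and concatenating the geometric variables produced round by round yields
\[
1 + H^\star(C, f, g^J, U_0)
\ \succeq\
\geom\big(e^{-\alpha_{j+1}|C|}\big).
\]
The quantitative inequality~\eqref{eqassumption3.1}, through its factor $\exp((\alpha_j - \alpha_{j+1})2^{j/2}v)$, is designed precisely to absorb the per-round loss of geometric parameter together with the $p_j^6$ compounding of small-probability color events on the two halves of the cluster.

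The main obstacle, relative to the ARW argument of~\cite{AFG}, is the depletion of active particles intrinsic to ARWD: every loop ends with the walker falling asleep, so the reservoir of active sites is continually eroded. The factor $p_j^6$ in~\eqref{eqassumption3.1} is exactly what forces enough color-$j$ walk events per round to keep the density on both halves above $\rho_0$, enabling $\cP(j)$ to be invoked repeatedly. A second delicate point is the start-up from a random initial configuration: Hoeffding's inequality ensures that with high probability the initial density on $C_0$ exceeds $\rho_0$, while the first-case exception in $g^J$ decouples the two halves during the first stabilisation of $C_0$; both facts are needed for the clean application of $\cP(j)$ in Step~1.
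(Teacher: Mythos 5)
Your proposal gets the high-level architecture right — ping-pong rally between $C_0$ and $C_1$, translation of the $H^\star$ bound into a bound on colour-$j$ loops via Lemma~\ref{lemmabernoulligeom}, the role of the sleep-mask exception in decoupling the first stabilisation of $C_0$ from $C_1$, and the heuristic role of~\eqref{eqassumption3.1} — but the mechanism you use to pass from one round of the rally to the next has a genuine gap.

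Your Step~2 argues that a concentration estimate shows that ``with overwhelming probability at least $\rho_0|C_1|$ sites of $C_1$ are active when the next phase begins,'' and your Step~3 then re-invokes $\cP(j)$ on $C_1$. This cannot work as stated, for two compounding reasons. First, the induction hypothesis $\cP(j)$ is formulated for an \emph{i.i.d.} Bernoulli($\rho$) initial configuration on the cluster, not for an arbitrary configuration with at least $\rho_0|C_1|$ active sites; after a stabilisation round of $C_0$, the resulting set of reactivated sites in $C_1$ is highly correlated (it is a union of random walk ranges), so it is not in the class where $\cP(j)$ applies. Second, because ARWD is non-abelian (the paper explicitly remarks that there is no monotonicity with respect to the initial configuration), you cannot even argue that ``more active sites means the stabilisation takes longer'' to repair the lack of i.i.d.\ structure. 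The paper circumvents both problems by defining $\cN$ as the first ping-pong round in which the opposite cluster is \emph{not fully reactivated}: before round $\cN$, each sub-round of $C_0$ or $C_1$ starts from the fully active configuration (Bernoulli with $\rho=1$), and $\cP(j)$ applies with $\rho=1$. The variable $\cN$ is then dominated below by a geometric whose parameter involves $\pi_0\pi_1$, the probabilities that a round fully reactivates the sibling. This is supplemented by a positive-correlation inequality, the key independence of the sequence $(\cL'_i)$ from the supports $R(x^\star_k,\ell,j)$ of colour-$j$ loops, and Lemma~2.4 of~\cite{AFG} on geometric sums of geometric variables — none of which appear in your sketch, and without which the ``concatenation of geometrics round by round'' has no rigorous footing.

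Two smaller inaccuracies: the third factor $(32\alpha_j(D_j+1)^d)^{-1/2}$ in~\eqref{assumption-pj} is used to verify the condition $\alpha_j|C_k|\leq 1/(32 p_j^2)$ needed to apply Lemma~\ref{lemmabernoulligeom} (cf.~\eqref{condition_epsilon}), not to control a second-moment concentration estimate. And the factor $p_j^6$ in~\eqref{eqassumption3.1} arises as $(p_j^3)^2$ because Lemma~\ref{lemmabernoulligeom} is invoked once for each of $C_0$ and $C_1$, each contributing a $p_j^{-3}$ loss in the geometric parameter; it is not designed to ``keep the density above $\rho_0$.''
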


The proof being similar to the proof of Lemma 3.1 in \cite{AFG}, but with some important changes, we concentrate on the novelties in our setting and refer to \cite{AFG} for more detailed explanations.

\subsubsection{General setting for the proof of Lemma~\ref{lemma3.1}}

Let~$d,\,\r_0,\,v,\,(D_j)_{j\geq 0},\,(\alpha_j)_{j\geq 0},\,(p_j)_{j\geq 0},\,n,\,A,\,(A_j, \cC_j)_{j\leq L}$,~$(p_C)_{C\in C_0}$,~$f$ and~$g^J$ be as in the statement.
To lighten the notation, from now on we will write~$H^\star(C,\,U_0)$ instead of~$H^\star(C,\,f,\,g^J,\,U_0)$.
We assume that~$0\leq j< L$ is such that the property~$\cP(j)$ holds and we wish to establish the property~$\cP(j+1)$, i.e., that for every~$C \in C_{j+1}$ and~$\rho\in[\rho_0,1]$, if~$U_0\sim\cP_{C,\,\r}$ then
    \begin{equation}
    \label{goal-induction}
1+H^\star(C,\,U_0)\succeq \geom\big(e^{-\alpha_{j+1} |C|}\big)
\,. 
   \end{equation}
We start by distinguishing between two cases. First, if~$C \in C_{j+1} \cap \cC_j$, then~\eqref{goal-induction} directly follows from induction hypothesis~$\cP(j)$, using our assumption that~$\alpha_{j+1}\leq\alpha_j$.

Suppose now that~$C\in \cC_{j+1}\setminus \cC_j$. 
Following property~\itref{conditionMerge} of the dormitory hierarchy,
we have~$\diam \, C \leq  D_j$ and we can write~$C = C_0 \cup C_1$ with~$C_0,\,C_1 \in \cC_j$. To simplify the notation,
we write \smash{$x_0^\star=x_{C_0}^\star$}
and \smash{$x_1^\star=x_{C_1}^\star$}.
Exchanging the names~$C_0$ and~$C_1$ if necessary, we may assume
that~$|C_0| \geq |C_1|$ and~$x_C^\star=x_0^\star$.

To stabilise~$C$ we perform a ping-pong rally, namely, we first stabilise~$C_0$ then stabilise~$C_1$. Then, some sites of~$C_0$ may have been reactivated during the stabilisation of~$C_1$, so we stabilise again~$C_0$, and then again~$C_1$ if necessary, going on until both sets are stabilised.

As explained before, the dynamics contains the following exception, which is encoded into the sleep mask~$g^J$ defined in Section~\ref{section-def-f-g}: the loops emitted during the first stabilisation of~$C_0$ are not allowed to wake up the sites of~$C_1$, so that the first stabilisation of~$C_1$ starts from the initial configuration inside~$C_1$.

As in \cite{AFG}, denoting by~$U_i$ the set of unstable sites of~$C$ before the stabilisation number~$i$ (starting the numbering at~$0$, with~$U_0$ the random configuration defined above), we define
\begin{equation*}
\cN
=
\inf\big\{\,i\geq 1\ :\ 
C_0\not\subset  U_{2i}
\quad\text{or}\quad
C_1\not\subset U_{2i+1}
\,\big\}
\,,
\end{equation*}
which indicates the first stabilisation which does not wake up all the
sites of the other cluster (with the exception that we do not require
anything on the first stabilisation of~$C_0$, which in any case is not
allowed to wake up anyone in~$C_1$).
Our notation~$U_i$ corresponds to the notation~$R_i$ in \cite{AFG}.

\subsubsection{Key differences with respect to \cite{AFG}}

We now explain the main differences with respect to the proof of Lemma 3.1 in \cite{AFG}.

The first difference is that, while in \cite{AFG} we were interested in the stabilisation time starting with all the particles active (i.e.,~$U_0=C$), here we start instead with an i.i.d.\ initial configuration, where each site of~$A$ is unstable with probability~$\r$.
This difference comes from the fact that, in ARW, we may let the
particles walk until each site of~$A$ contains one particle, and then
obtain the configuration with one active particle sitting on each site of~$A$, whereas in ARWD, due to the fact that a particle falls asleep at the end of its loop, the configuration reached once all the sites of~$A$ are occupied is not necessarily fully active.

Another difference is that, during the first stabilisation of~$C_0$, the configuration in~$C_1$ is frozen with no reactivation possible, due to the exception inserted in the definition of the sleep mask~$g$ (see Section~\ref{section-def-f-g}).
Thus, in our setting the first stabilisation of~$C_0$ and the first
stabilisation of~$C_1$ both start with an i.i.d.\ configuration with parameter~$\r$.
Then, as in \cite{AFG}, each of the subsequent stabilisation rounds of~$C_0$ or~$C_1$, until the~$\cN$-th round, starts with the corresponding set fully active.

As a consequence, the variable~$\cN$ is no longer exactly geometric.
Nevertheless, this difference is harmless.
Indeed, we will show that~$\cN$ dominates a geometric variable, using
that at each stabilisation round we can use the induction hypothesis
(with initial fraction of active particles~$\rho$ for the first round
and~$1$ for the successive rounds).

Another crucial difference is that, in our setting, we do not have an equivalent of Lemma 2.3 of \cite{AFG}.
This lemma states that, in the ARW case, knowing the total number~$\cT$ of sleeps and loops of colour~$0,\,\dots,\,j-1$ produced by the distinguished site of a cluster~$C\in \cC_j$ during the stabilisation of this cluster (starting from all the sites active), then the number of loops of colour~$j$ produced by~$x_C^\star$ is distributed as a sum of~$\cT$ geometric variables (minus one) with an explicit parameter, independent of~$\cT$.
This comes from the key observation that, during the stabilisation of
a cluster~$C\in\cC_j$, the loops of colour at least~$j$ have no effect
on~$\cT$, because these loops are not allowed to wake up anyone
inside~$C$.
Thus, in ARW, to determine~$\cT$ one does not care about how many loops of colour~$j$ are inserted between any two loops of colour less than~$j$.

In \cite{AFG}, the induction hypothesis was formulated in terms of the number of loops of colour~$j$ produced by the distinguished site of a cluster~$C\in\cC_j$.
Thus, we used this lemma to go from one colour to the next one, translating an information about the loops of colour~$j$ into an information about the loops of colour~$j+1$.

Now recall that ARWD differs from ARW in that, after each loop, the particle is forced to fall asleep.
Thus, all the loops have an effect on the stabilisation of~$C\in\cC_j$ because, even if a loop is not allowed to wake up anyone, it has the effect of making the distinguished particle fall asleep.
This dependency makes it more difficult to translate a statement on the number of loops of colour~$j$ into a statement of colour~$j+1$.

To circumvent this, our induction hypothesis is simply formulated with the total number of steps and, at each step, we translate the lower bound on  the total number of steps into a lower bound on the loops of colour~$j$.
Knowing that~$H^\star(C,\,U_0)=h$, the number of loops of colour~$j$ is a sum of~$h$ independent Bernoulli variables with parameter~$1/((1+\lambda)2^{j+1})$, but these Bernoulli variables are not independent of~$H^\star(C,\,U_0)$.
Yet, loosing on the parameter of the geometric variables, we are able to obtain a lower bound, using Lemma~\ref{lemmabernoulligeom} below, whose elementary proof is deferred to Section~\ref{section-proof-lemmabernoulligeom}.

\begin{lemma}
\label{lemmabernoulligeom}
There exists~$\bar p\in(0,\,1)$ such that, for every~$p\in(0,\,\bar p)$, for
every~$\varepsilon$ such
that
\begin{equation}
\label{condition_epsilon}
\exp\left(-\frac 1 {32p^2}\right)
\leq\varepsilon
<p^3
\,,
\end{equation}
if~$T+1$ is a
geometric variable with parameter~$\varepsilon$
and~$(X_n)_{n\geq 1}$ are i.i.d.\ Bernoulli variables with
parameter~$p$, not necessarily independent of~$T$, then the
variable~$1+X_1+\cdots+X_T$ stochastically dominates a geometric
variable with parameter~$\varepsilon/p^3$.
\end{lemma}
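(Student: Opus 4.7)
The plan is to prove, for every $k \geq 1$,
\[
\bbP\big(1 + X_1 + \cdots + X_T \geq k\big) \geq \bigg(1 - \frac{\varepsilon}{p^3}\bigg)^{k-1}.
\]
The case $k = 1$ is immediate, so I would fix $k \geq 2$ and set $\tilde\varepsilon = \varepsilon/p^3$; note that the hypothesis $\varepsilon < p^3$ yields $\tilde\varepsilon \in (0, 1)$. The heart of the argument is the decomposition, valid for every integer $m \geq k - 1$,
\[
\bbP\big(X_1 + \cdots + X_T \leq k - 2\big) \leq \bbP(T < m) + \bbP\big(X_1 + \cdots + X_m \leq k - 2\big).
\]
This is the only place where the joint law of $T$ and $(X_n)$ enters: since $X_i \geq 0$, on $\{T \geq m\}$ we have $X_1 + \cdots + X_T \geq X_1 + \cdots + X_m$, so the event in question is contained in $\{T < m\} \cup \{X_1 + \cdots + X_m \leq k - 2\}$. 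Thus the estimate is insensitive to the arbitrary coupling between $T$ and the $X_n$. The first term equals $1 - (1-\varepsilon)^m \leq m\varepsilon$; the second, being a lower tail of a Binomial$(m,p)$ variable, is controlled by a standard multiplicative Chernoff bound, which decays exponentially in $k-1$ as soon as $mp$ exceeds $2(k-1)$.

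The delicate point is the choice of $m = m_k$. I would take $m_k$ of order $(k-1 + \ln(1/\varepsilon))/p$, mixing two scales: the piece $(k-1)/p$ makes the Chernoff estimate decay exponentially in $k-1$, while the additive piece $\ln(1/\varepsilon)/p$ is essential for small $k$. Indeed, the model case is $k = 2$, where the goal is $\bbP(X_1 = \cdots = X_T = 0) \leq \varepsilon/p^3$. Optimizing the decomposition bound $m\varepsilon + (1-p)^m$ over $m$ gives a minimum of order $(\varepsilon/p)(1 + \ln(1/\varepsilon))$, attained near $m \sim \ln(1/\varepsilon)/p$. Matching this against $\varepsilon/p^3$ requires $p^2 \ln(1/\varepsilon)$ to be bounded by an absolute constant, and this is precisely what the hypothesis $\varepsilon \geq \exp(-1/(32p^2))$ ensures: it gives $\ln(1/\varepsilon) \leq 1/(32 p^2)$ and hence $p^2 \ln(1/\varepsilon) \leq 1/32$. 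The complementary hypothesis $\varepsilon < p^3$ then controls the linear contribution $m_k \varepsilon$, bounding it by a quantity of order $(k-1)p^2 + 1/32$.

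To close the argument I would split into two regimes. When $(k-1)\tilde\varepsilon \leq 1$, the target $1 - (1-\tilde\varepsilon)^{k-1}$ is comparable to $(k-1)\tilde\varepsilon$, so both the Markov piece $m_k\varepsilon$ and the Chernoff piece must be kept very small; this is the regime where the additive $\ln(1/\varepsilon)/p$ term in $m_k$ pays off. When $(k-1)\tilde\varepsilon > 1$, the target is bounded below by a universal positive constant while the Chernoff piece alone is exponentially small, and matching is easy. Taking $\bar p$ small enough to absorb the remaining numerical constants uniformly in both regimes concludes the proof. The main obstacle is the small-$k$ case, where the interplay between the two hypotheses on $\varepsilon$ in \eqref{condition_epsilon} is essential; once that delicate balance is secured, the large-$k$ case is routine.
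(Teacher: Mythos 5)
Your decomposition
\[
\bbP(S_T \leq k-2) \leq \bbP(T < m) + \bbP(S_m \leq k-2)
\]
is precisely the one the paper uses (written in complementary form), so the backbone is the same. The substantive differences are in the choice of truncation level and the Chernoff bound. The paper takes $n = \lceil k/(8p^3)\rceil$, on the scale $k/p^3$, and bounds the lower tail of the binomial by $\mathbb{E}[p^{S_n}]p^{-k} = (1-p+p^2)^n p^{-k}$; this gives the clean chain $\bbP(T\geq n) \geq (1+\varepsilon)\exp(-k\varepsilon/p^3)$ and $\bbP(S_n < k) \leq \varepsilon\exp(-k\varepsilon/p^3)$, hence $\bbP(S_T\geq k) \geq \exp(-k\varepsilon/p^3) \geq (1-\varepsilon/p^3)^k$ with no case distinction whatsoever. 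You instead take $m_k$ on the much smaller scale $(k-1+\ln(1/\varepsilon))/p$, use a multiplicative Chernoff bound, and split cases on $(k-1)\tilde\varepsilon \lessgtr 1$.

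Your case-1 analysis is sound (the $\ln(1/\varepsilon)/p$ term, combined with $p^2\ln(1/\varepsilon)\leq 1/32$, does the work, and your $k=2$ optimization is correct). But your treatment of case 2 has a genuine gap. You claim that when $(k-1)\tilde\varepsilon > 1$ the target $1-(1-\tilde\varepsilon)^{k-1}$ is bounded below by $1-e^{-1}$ ``while the Chernoff piece alone is exponentially small, and matching is easy,'' but you never address the Markov piece $\bbP(T<m_k)$. Your bound on it, $m_k\varepsilon = C(k-1+\ln(1/\varepsilon))\varepsilon/p$, can be far greater than $1$ when $k$ is large — indeed $(k-1)\tilde\varepsilon > 1$ gives no upper bound on $(k-1)\varepsilon$ at all. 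So in case 2 the inequality $\bbP(T<m_k) + (\text{Chernoff}) \leq 1-(1-\tilde\varepsilon)^{k-1}$ is not ``easy''; one must use the exact value $\bbP(T<m_k) = 1-(1-\varepsilon)^{m_k}$ and show directly that $(1-\varepsilon)^{m_k}$ exceeds $(1-\tilde\varepsilon)^{k-1}$ plus the Chernoff term, which is essentially the comparison the paper performs. This can be made to work with your $m_k$ (since $2\varepsilon/p = 2\tilde\varepsilon p^2 \ll \ln\frac{1}{1-\tilde\varepsilon}$ for small $p$, so $(1-\varepsilon)^{m_k}$ decays strictly more slowly than $(1-\tilde\varepsilon)^{k-1}$), but it requires an argument, and as stated your sketch simply omits it. The paper's choice of the larger $n$ together with the sharper $\mathbb{E}[p^{S_n}]p^{-k}$ bound is exactly what lets it sidestep this issue.
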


Then, to obtain the induction hypothesis at rank~$j+1$ we simply bound from below the total number of steps by the number of loops of colour~$j$.

\subsubsection{The loop representation}

As explained above, we want to study the number of loops of colour~$j$ produced by the distinguished vertex during the successive rounds of the ping-pong rally.
Yet, as ARWD is defined, the number of loops of colour~$j$ cannot be expressed as a function of the ARWD process.
Thus, we use the loop representation presented in~\cite{AFG}, adapting it to our setting, which enables us to study these coloured loops.

This representation is based on independent random variables
\begin{equation*}
    \big(I(x,h)\big)_{x \in A, h \in \N_0} \in \{0,1\}^{A \times \N_0},
    \quad
    J=(J_t)_{t \in \N_0} \in \N^{\N_0}
    \quad\text{and}\quad
    \big(R(x,\ell, j)\big)_{x \in A,\ell \in \N_0, j \in \N_0} \in \cP(A)^{A \times \N_0^2},
 \end{equation*}
 where the variables~$I(x,h)$ are Bernoulli with parameter~$\lambda/(1+\lambda)$, the variables~$1+J_t$ are geometric with parameter~$1/2$ and~$R(x, \ell, j)$ is distributed as the support of a symmetric random
 walk on the torus started and killed at~$x$.
 Probabilities and expectations are simply denoted by~$\bbP$ and~$\bbE$, which depend implicitly on the set~$A$.

 We now describe the update rules of the model. Recall that a
configuration is a subset~$U \subset A$ indicating
which sites are active.  
 To update the configuration we need to recall the numbers of visits
and loops already used at each vertex.
This is the role of what we call the odometer function~$m :  A \to \N_0$ and the loop odometer function~$\ell : A \to \N_0$.

 Let~$k\in\{0,1\}$ and let us describe the procedure to stabilise the cluster~$C_k$ using these variables.
Let~$f_{C_k}$ be the toppling strategy associated with this cluster~$C_k$ and let~$g^J$ be the sleep mask, both defined in Section~\ref{section-def-f-g}.

Given a fixed configuration~$U_0 \subset A$, an odometer~$m_0 : A \to \N_0$ and a loop odometer~$\ell_0: A \to \N_0$, we define a process~$(U_t,\,m_t,\,\ell_t)_{t\geq 0}$ as follows.
  Let $t\geq 0$ and assume that $(U_s,\,h_s,\,\ell_s)$ is constructed for every~$s\leq t$.
  If $U_t\cap C_k=\varnothing$ then we define~$(U_{t+1},\,m_{t+1},\,\ell_{t+1})=(U_t,\,m_t,\,\ell_t)$.
  Otherwise, if $U_t\cap C_k\neq\varnothing$ then, writing \smash{$x_t=f_{C_k}( U_0,\,\dots,\,U_t)$}, $i_t=I(x,\,m_t(x))$, $j_t=J_t$, $R_t=R(x,\,\ell_t(x),j_t)$ and $W_t=g^J(U_0,\,\dots,\,U_t)$, we define the new odometer $m_{t+1}=m_t+\mathbbm{1}_x$, the new loop odometer~$\ell_{t+1}=\ell_t+\mathbbm{1}_{\{i_t=0\}}\mathbbm{1}_x$, and the next configuration

  \begin{equation*}
       U_{t+1}
      =
      \begin{cases}
           U_t\setminus\{x_t\}
          &\text{if~$i=1$,}
          \\
          (U_t\cup (R_t\cap W_t))\setminus \{x_t\}
          &\text{otherwise.}
      \end{cases}
  \end{equation*}
Then, we define the stabilisation operator
 \begin{equation*}
     \text{Stab}_{k}:\begin{cases}
         \cP(A) \times (\N_0^{A})^2 
         &\ \longrightarrow\ \cP(A) \times (\N_0^{A})^2\\
         (U_0,\,m_0,\,\ell_0) 
         &\ \longmapsto\ ( U_\tau,\,m_\tau,\,\ell_\tau)\,,
     \end{cases}
 \end{equation*}
where
\begin{equation*}
    \tau=\inf\{t\in \N_0:  U_t \cap C_k=\varnothing\}\,,
\end{equation*}
which is almost surely finite (we restrict ourselves to the event on
which it is always finite).
This notation allows us to define the number of loops of colour~$j$ produced by the distinguished vertex of~$C_k$ during the stabilisation of~$C_k$, which is
 \begin{equation*}
     \cL(C_k,\,U_0,\,j)=\big|\big\{t<\tau\,:\,x_t=x^\star_{C_k},\, j_t=j\big\}\big|
     \,.
 \end{equation*}

\subsubsection{Translating the induction hypothesis}

Going back to our cluster~$C=C_0\cup C_1$ with~$C_0,\,C_1\in\cC_j$, our induction hypothesis~$\cP(j)$ tells us that for~$k \in \{0, 1\}$, for any random subset~$U_0\sim\cP_{C_k,\,\r}$ with~$\rho\geq\rho_0$, the variable~$1 + H^\star(C_k,U_0)$ dominates a geometric random variable with parameter~$\exp(-\alpha_j |C_k|)$.

We wish to translate this information in terms of the number of loops of colour~$j$.
Recall that each step made by the distinguished particle is a loop of colour~$j$ with probability~$1/((1+\lambda)2^{j+1})$.
Hence, we may write
 \begin{equation*}
\cL(C_k,\,U_0,\,j)
\ =\ 
\sum_{i=1}^{H^\star(C_k,\,U_0)} X_i
\ \succeq\ 
\sum_{i=1}^{H^\star(C_k,\,U_0)} X'_i,
\end{equation*}
where 
$(X_i)_{i\geq 1}$ are i.i.d.\ Bernoulli random variables with parameter~$1/((1+\lambda)2^{j+1})$ and~$(X'_i)_{i\geq 1}$ are i.i.d.\ Bernoulli random variables with parameter~$p_j$ (recall that~$p_j\leq 1/((1+\lambda)2^{j+1})$),
but not necessarily independent of~$H^\star(C_k,\,U_0)$.

We now check that the conditions required to apply Lemma~\ref{lemmabernoulligeom} are satisfied. 
On the one hand, since~$C_k$ is included into~$C$ which has diameter
at most~$D_j$, we have~$|C_k|\leq (D_j+1)^d$.
Thus, using our assumption~\eqref{assumption-pj}, we have
\begin{equation}
\label{condition1-lemmabernouilligeom}
\alpha_{j}|C_k|
\leq
\alpha_j(D_j+1)^d
\leq
\frac{1}{32p_j^2}
\,.
\end{equation}
On the other hand, it follows from our assumption~\eqref{eqassumption3.1} that
\begin{equation}
\label{condition2-lemmabernouilligeom}
\exp\big(-\alpha_{j}|C_k|\big)
\leq  
\exp\big(-\alpha_j 2^{j/2}v\big)
\leq
\frac{p_j^6}{4}
\leq
\frac{p_j^3}{2}
\,.
\end{equation}
These two bounds~\eqref{condition1-lemmabernouilligeom}
and~\eqref{condition2-lemmabernouilligeom} allow us to apply
Lemma~\ref{lemmabernoulligeom} with~$p_j$
and~$\e=e^{-\alpha_{j}|C_k|}$ to deduce that, for an initial
configuration~$U_0\sim\cP_{C_k,\,\r}$, the variable~$1+\cL(C_k,U_0,j)$ dominates a geometric variable with parameter  
 \begin{equation}
     \label{def-q}
q_k= \frac 1{p_j^3}
\exp\big(-\alpha_{j}|C_k|\big)
\,.
 \end{equation}

\subsubsection{The ping-pong rally and the infinite Sisyphus sequence}

For every~$i\in\N_0$, we write~$\e(i)=0$ if~$i$ is even and~$\e(i)=1$ if~$i$ is odd.
Then, starting with~$U_0$ distributed as~$\cP_{C,\,\r}$
and~$m_0=\ell_0=0$, we define, for every~$i\geq 0$,
\[
(U_{i+1},\,m_{i+1},\,\ell_{i+1})
=
\mathrm{Stab}_{\varepsilon(i)}(U_i,\,m_i,\,\ell_i)
\,.
\]
For every~$i\geq 0$, we denote by~$\cL_i$ the number of loops of colour~$j$ performed by the distinguished site of~$C_{\e(i)}$ during the~$i$-th stabilisation (the first stabilisation of~$C_0$ having number~$0$).
We denote by~$\cL(C,\,U_0,\,j)$ the total number of loops of colour~$j$ made by~$x_C^\star$ during the whole ping-pong-rally.

Besides, we also consider another sequence which is built by instead assuming that each stabilisation (except the first stabilisation of~$C_0$) wakes up all the sites of the other cluster.
Namely, for~$i\leq 2$ we let~$(U'_i,\,h'_i,\,\ell'_i)=(U_i,\,m_i,\,\ell_i)$, and for~$i\geq 2$ we let
\[
(U'_{i+1},\,h'_{i+1},\,\ell'_{i+1})
=
\mathrm{Stab}_{\varepsilon(i)}(C_{\varepsilon(i)},\,m_i,\,\ell_i)
\,.
\]
We define~$\cL'_i$ similarly with this new sequence and, as in \cite{AFG} we note that~$\cL'_i=\cL_i$ for every~$i<2\cN$, allowing us to write
\begin{equation*}
H^\star(C,\,U_0)
\geq
\cL(C,\,U_0,\,j)
\geq
\sum_{i=0}^{\cN-1}
\cL_{2i}
=\sum_{i=0}^{\cN-1}
\cL'_{2i}\,.
\end{equation*}
Note that the sequence~$(\cL'_{i})_{i\geq 0}$ is independent, with~$\cL'_0$ distributed as~$\cL(C_0,\,U_0,\,j)$,~$\cL'_1$ distributed as \smash{$\cL(C_1,\,U_0,\,j)$} and, for every~$i\geq 2$,~$\cL'_i$ distributed as~$\cL(C_{\e(i)},\,C,\,j)$.
Thus, as explained above, for every~$i\geq 0$, the variable~$1+\cL'_i$ dominates a geometric variable with parameter~$q_{\e(i)}$ given by~\eqref{def-q}.
And, as in \cite{AFG}, we have the key property that this sequence is independent of \smash{$\big(R(x^\star_k,\,\ell,\,j)\big)_{k\in\{0,1\},\,\ell\geq 0}$}, i.e., the sets visited by the loops of colour~$j$ emitted by the two distinguished vertices.
Note that this independence property would not hold if we had not
introduced the exception that during the first stabilisation of~$C_0$,
the distinguished particle of~$C_0$ is not allowed to wake up anyone
in~$C_1$: because then the first stabilisation of~$C_1$ would start
with a configuration which depends on the first stabilisation
of~$C_0$.
Note also that we use here the assumption that the initial
configuration~$U_0$ is i.i.d., so that~$U_0\cap C_0$ and~$U_0\cap C_1$
are independent.

\subsubsection{Resulting stochastic domination}

Using this independence property and proceeding as in \cite{AFG}, we have a positive correlation inequality which allows us to write, for every~$m\geq 1$,
\[
\bbP\big(\,
H^\star(C,\,U_0)
\,\geq\,m\,\big)
\geq
\sum_{k=1}^{+\infty}\,
\bbP\Bigg(\,\Bigg\{\sum_{i=0}^{k-1}\cL'_{2i}\,\geq\,m\Bigg\}\cap\big\{\cN=k\big\}\,\Bigg)
\geq
\sum_{k=1}^{+\infty}\,
\bbP\Bigg(\,\sum_{i=0}^{k-1}\cL'_{2i}\,\geq\,m\,\Bigg)
\,\bbP\big(\cN= k\big)
\,.
\]
Thus, taking~$\cN'$ a copy of~$\cN$ which is independent of~$\cL'$, we
get the stochastic domination
\[
H^\star(C,\,U_0)
\ \succeq\ 
\sum_{i=0}^{\cN'-1}\cL'_{2i}
\,.
\]
Then, as explained above, this variable~$\cN'$ is not exactly a geometric variable, but it dominates a geometric variable with parameter~$1-\pi_0 \pi_1$, with
\[
\pi_0
=\bbE\big[\psi_0(\cL'_0)\big]
\wedge
\bbE\big[\psi_0(\cL'_2)\big]
\qquad\text{and}\qquad
\pi_1
=\bbE\big[\psi_1(\cL'_1)\big]
\wedge
\bbE\big[\psi_1(\cL'_3)\big]
\,,
\]
where the function~$\psi_0$ is defined by
\[
\psi_0\,:\,x\in\N\ \longmapsto\ \bbE\Bigg(\,
C_1\subset
\bigcup_{0\,\leq\,\ell\,<\,x}
R\big(x^\star_0,\,\ell,\,j\big)
\,\Bigg)
\]
and~$\psi_1$ is defined similarly, replacing~$C_1$ with~$C_0$ and~$x^\star_0$ with~$x^\star_1$.
Now, we use the following elementary result (Lemma 2.4 in \cite{AFG}):

\begin{lemma}
Let~$N$ be a geometric random variable with parameter~$a\in(0,1)$, and let~$(X_n)_{n\in\N}$ be i.i.d.\ geometric variables with parameter~$b\in(0,1)$, independent of~$N$.
Then, the variable
\[
S=1+\sum_{n=1}^N \big(X_n-1\big)\qquad
\text{is geometric with parameter}\qquad
\frac{ab}{1-b+ab}\,.\]
\end{lemma}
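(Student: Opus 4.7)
The plan is to prove this by computing probability generating functions. Since $N$ takes values in $\N$ and $X_n$ takes values in $\N$, the variable $S$ also takes values in $\N$, so it suffices to show that the PGF of $S$ matches that of a geometric variable with the stated parameter.

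First, I would record the relevant PGFs. Writing $\phi_N(z) = \mathbb{E}[z^N]$ and $\phi(z) = \mathbb{E}[z^{X_1 - 1}]$, direct summation of the geometric series gives
\[
\phi_N(z) \;=\; \sum_{k\geq 1} a(1-a)^{k-1} z^k \;=\; \frac{az}{1-(1-a)z},
\qquad
\phi(z) \;=\; \sum_{k\geq 1} b(1-b)^{k-1} z^{k-1} \;=\; \frac{b}{1-(1-b)z}.
\]
Then, conditioning on $N$ and using the independence of $N$ from $(X_n)_{n\geq 1}$ together with the i.i.d.\ property,
\[
\mathbb{E}\bigl[z^{S-1}\bigr]
\;=\;
\mathbb{E}\Bigl[\,\prod_{n=1}^N z^{X_n - 1}\Bigr]
\;=\;
\mathbb{E}\bigl[\phi(z)^N\bigr]
\;=\;
\phi_N\bigl(\phi(z)\bigr),
\]
and thus $\mathbb{E}[z^S] = z\,\phi_N(\phi(z))$.

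Next I would substitute and simplify. Plugging in $\phi(z) = b/(1-(1-b)z)$ into $\phi_N$ gives
\[
\mathbb{E}[z^S]
\;=\;
\frac{a\,z\,\frac{b}{1-(1-b)z}}{1-(1-a)\frac{b}{1-(1-b)z}}
\;=\;
\frac{ab\,z}{1-(1-b)z - (1-a)b}
\;=\;
\frac{ab\,z}{(1-b+ab) - (1-b)z}.
\]
Setting $p = \frac{ab}{1-b+ab}$, one checks that $1-p = \frac{1-b}{1-b+ab}$, so dividing numerator and denominator by $1-b+ab$ yields
\[
\mathbb{E}[z^S]
\;=\;
\frac{pz}{1-(1-p)z},
\]
which is the PGF of a geometric variable with parameter $p$. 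By uniqueness of PGFs on $\N$-valued distributions, $S$ is geometric with parameter $\frac{ab}{1-b+ab}$, as announced.

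There is essentially no obstacle here beyond bookkeeping; the only minor point to watch is the convention for geometric variables (starting at $1$ versus at $0$) and keeping track of the shift by $1$ in the definition $S = 1 + \sum(X_n - 1)$, which accounts for the extra factor of $z$ in the PGF of $S$.
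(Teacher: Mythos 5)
Your probability-generating-function computation is correct: the PGFs of $N$, $X_1-1$, and $S$ are all computed properly, the composition identity $\mathbb{E}[z^{S-1}]=\phi_N(\phi(z))$ is applied correctly under the independence hypothesis, and the algebraic simplification to $\frac{pz}{1-(1-p)z}$ with $p=\frac{ab}{1-b+ab}$ is accurate, so uniqueness of PGFs finishes the argument. Note, however, that the paper does not actually prove this lemma — it simply quotes it as Lemma~2.4 of the cited reference [AFG] — so there is no in-paper proof to compare against; your self-contained derivation is a fine way to establish it. (For reference, there is also a short probabilistic proof: represent $S-1$ as the number of $b$-failures seen before the first joint $(b,a)$-success in an i.i.d.\ sequence of $(\mathrm{Ber}(b),\mathrm{Ber}(a))$ pairs; conditioning out the neutral ``$b$-success but $a$-failure'' steps, each remaining step terminates with probability $\frac{ab}{1-b+ab}$, which immediately gives the geometric law. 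Both routes are comparably short.)
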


Using this, we deduce that~$1+H^\star(C,\,U_0)$ dominates a geometric variable with parameter
\[
q'
=
\frac{(1-\pi_0\pi_1)q_0}{1-\pi_0\pi_1q_0}
\,.
\]
Then, the same computations as in the end of Section 7 of \cite{AFG} yield
\begin{equation}
\label{computation-qprime}
q'
\leq 
\frac{|C|}{(1-q_0)\Upsilon_d(D_j)}\,q_0q_1
=
\frac{|C|\exp\big(-\alpha_j|C|\big)}
{(1-q_0)\Upsilon_d(D_j)\,p_j^{6}}
\leq
\frac{2\,|C|\exp\big(-\alpha_j|C|\big)}
{\Upsilon_d(D_j)\,p_j^{6}}
\,,
\end{equation}
using the expression~\eqref{def-q} of~$q_0$ and~$q_1$ and then the
crude bound~$q_0\leq 1/2$ which follows from~\eqref{condition2-lemmabernouilligeom}.
Then, using our assumption~\eqref{eqassumption3.1}, we have
\[
1
\leq
(\alpha_j-\alpha_{j+1})
2^{j/2}v
\leq
(\alpha_j-\alpha_{j+1})
|C|
\,,
\]
which, using that~$x\mapsto x e^{-x}$ decreases on~$[1,\,\infty)$, implies that
\[
|C|\,\exp\big(-(\alpha_j-\alpha_{j+1})|C|\big)
\leq
2^{j/2}v\,\exp\big(-(\alpha_j-\alpha_{j+1})2^{j/2}v\big)
\,.
\]
Plugging this into~\eqref{computation-qprime}, we obtain
\[
q'\leq
\frac{2^{j/2+1}v\exp\big(-(\alpha_j-\alpha_{j+1})2^{j/2}v\big)}
{\Upsilon_d(D_j)\,p_j^6}
\,\exp\big(-\alpha_{j+1}|C|\big)
\leq 
\exp\big(-\alpha_{j+1}|C|\big)
\]
where in the last inequality we used again our assumption~\eqref{eqassumption3.1}.
This concludes the proof of Lemma~\ref{lemma3.1}.\hfill\qed

\subsection{Proof of Lemma~\ref{lemmabernoulligeom}}
\label{section-proof-lemmabernoulligeom}

\begin{proof}[Proof of Lemma~\ref{lemmabernoulligeom}]
Let us consider the function
\[
f\,:\,p\in(0,\,1)\ \longmapsto\ \ln p-\frac{\ln(1-p+p^2)}{8p^3}
\,.
\]
When~$p\to 0$, we have~$f(p)\sim 1/(8p^2)$.
Hence, we can find~$\bar p\in(0,\,1)$ such that~$f(p)\geq 1/(16p^2)$ for
every~$p\in(0,\,\bar p)$.
Upon decreasing~$\bar p$ if necessary, we can assume that~$\bar p\leq
1/8$.

Let now~$p\in(0,\,\bar p)$ and~$\varepsilon$
satisfying~(\ref{condition_epsilon}), and let~$T$
and~$(X_n)_{n\geq 1}$ be as in the statement.
For every~$n\geq 1$, let us write~$S_n=X_1+\cdots+X_n$.
To prove the result, we have to show that
\begin{equation}
\label{domination_geom_epsilon_p}
\forall k\geq 1\,,\qquad
\mathbb{P}\big(S_T\geq k\big)
\geq
\left(1-\frac\varepsilon{p^3}\right)^k
\,.
\end{equation}
Let~$k\geq 1$.
Defining~$n=\lceil k/(8p^3)\rceil$, we simply write
\begin{equation}
\label{dumb_bound_ST}
\mathbb{P}\big(S_T\geq k\big)
\geq
\mathbb{P}\Big(\big\{T\geq n\big\}\cap\big\{S_n\geqslant k\big\}\Big)
\geq
\mathbb{P}\big(T\geq n\big)
\,-\,
\mathbb{P}\big(S_n<k\big)
\,.
\end{equation}
We then bound separately these two terms.

On the one hand, we have
\[
\mathbb{P}\big(T\geq n\big)
=
(1-\varepsilon)^{n}
=
\exp\left(\left\lceil\frac{k}{8p^3}\right\rceil\ln(1-\varepsilon)\right)
\,.
\]
First, note that
\[
\left\lceil\frac k{8p^3}\right\rceil
\leq
\frac k{8p^3}+1
\leq
\frac k{4p^3}
\,,
\]
using that~$p\leq 1/2$.
Besides, since~$\varepsilon\leq p^3\leqslant
1/2$, we have, by concavity of the logarithm function,
\[
\ln(1-\varepsilon)
\geq
2\varepsilon\,\ln\left(\frac 1 2\right)
\geq
-2\varepsilon
\,.
\]
Hence, we obtain
\begin{multline*}
\mathbb{P}\big(T\geq n\big)
\geq
\exp\left(-\frac{k\varepsilon}{2p^3}\right)
=
\exp\left(\frac{k\varepsilon}{2p^3}\right)\times
\exp\left(-\frac{k\varepsilon}{p^3}\right)
\\
\geq
\left(1+\frac{k\varepsilon}{2p^3}\right)
\exp\left(-\frac{k\varepsilon}{p^3}\right)
\geq
(1+\varepsilon)
\exp\left(-\frac{k\varepsilon}{p^3}\right)
\,.
\numberthis\label{dumb_bound_part1}
\end{multline*}

On the other hand, it follows from Chebychev's inequality that
\begin{multline*}
\mathbb{P}\big(S_n<k\big)
\leq
\mathbb{E}\big[p^{S_n}\big]p^{-k}
=
(1-p+p^2)^np^{-k}
\leq
\exp\big(-kf(p)\big)
\\
\leq
\exp\left(-\frac k{16p^2}\right)
\leq
\exp\left(-\frac 1{32p^2}\right)
\exp\left(-\frac k{32p^2}\right)
\leq
\varepsilon\,
\exp\left(-\frac{k\varepsilon}{p^3}\right)
\numberthis\label{dumb_bound_part2}
\end{multline*}
where, in the last inequality, we used our hypotheses~$\varepsilon\geq
\exp(-1/(32p^2))$ and~\smash{$\varepsilon\leq p^3\leqslant
 p/32$}.

Now, plugging~(\ref{dumb_bound_part1}) and~(\ref{dumb_bound_part2})
into~(\ref{dumb_bound_ST}), we get
\[
\mathbb{P}\big(S_T\geq k\big)
\geq
\exp\left(-\frac{k\varepsilon}{p^3}\right)
\geq
\left(1-\frac\varepsilon{p^3}\right)^k
\,,
\]
using the inequality~$\ln(1-\varepsilon/p^3)\leq -\varepsilon/p^3$.
This concludes the proof of~(\ref{domination_geom_epsilon_p}), which
is the claim of the Lemma.
\end{proof}

\subsection{Concluding proof of Proposition~\ref{prop-ARWD-stab-time-bis} in two dimensions}

We now put the pieces together to obtain the lower bound on the stabilisation time of ARWD on the two-dimensional torus.

\begin{proof}[Proof of Proposition~\ref{prop-ARWD-stab-time-bis}, case~$d=2$]

Let~$d=2,\,\lambda>0,\,\mu\in(0,1)$ and~$\rho\in(0,1)$.
We start by tuning the various parameters.
Let~$K$ be the constant given by Lemma~\ref{lemmahittingprob}, let~$\bar p$ be the constant given by Lemma~\ref{lemmabernoulligeom} and let
\[
p
=
\bar p\wedge\frac{\sqrt{\mu}}{384}\wedge\frac{1}{2(1+\lambda)}
\,.
\]
Let~$v_0$ be given by Lemma~\ref{lemma3.4} (applied
with~$\rho_0=\rho$), and let~$v_1\in\N$ be such that for all~$v\geq v_1$, we have
\begin{equation}
\label{def-v1}
2v^{10}
\ln\bigg(\frac{1152 v^3}{\mu}\bigg)
\leq
p^6 K\,\exp\bigg(\frac{1-2^{-1/4}}{2}\,v^{1/8}\bigg)
\,.
\end{equation}
Let now~$v=v_0\vee v_1$ and
\smash{$r=2\big\lfloor\sqrt{2v/\mu}\big\rfloor$}.
Then, for~$j \in \N_0$ we define
\begin{equation*}
    \alpha_j
    =
    \frac{1+2^{-j/4}}{2\sqrt{v}}
    \,,\qquad
    D_j
    =6^j\times 12vr
    \qquad\text{and}\qquad
    p_j
    =\frac p{6^{j}v^{3/2}}
    \,.
\end{equation*}
Lastly, we take~$\kappa=\alpha_0\mu/8$.

Let now~$n\geq r+1$ and~$A\subset\Z_n^2$ such that~$|A|\geq\mu n^2$.
Using Lemma~\ref{lemma3.3}, we can consider a~$(v,\,D)$-dormitory hierarchy~$(A_j,\,\cC_j)_{j\leq L}$ on~$A$ such that~$|A_L|\geq|A|/4$ and every set~$C\in\cC_0$ is~$r$-connected.

Then, for every~$C\in\cC_0$ Lemma~\ref{lemma3.4} provides us with a~$C$-toppling procedure~$p_C$ such that the induction hypothesis~$\cP(0)$ defined in~\eqref{induction-hyp} holds, with~$\rho_0=\rho$. Let us consider the variable~$J$ as defined in Section~\ref{sec:reformulated-goal}, along with the toppling strategy~$f$ and the sleep mask~$g^J$
derived from this hierarchy and these toppling procedures, as
explained in Section~\ref{section-def-f-g}.

Let us now check that the assumptions of Lemma~\ref{lemma3.1} are satisfied.
For every~$j\geq 0$ have~$p_j\leq\bar p$ and~$p_j\leq
1/(2^{j+1}(1+\lambda))$ and, moreover, using the definitions of~$p$,
of~$r$ and of~$D_j$, along with the fact
that~$\alpha_j\leq 1$, we can write
\[
p_j
\leq
\frac{\sqrt{\mu}}{384\times 6^{j}v^{3/2}}
\leq
\frac{1}{96\sqrt{2}\times 6^{j}vr}
=
\frac{1}{8\sqrt{2}\,D_j}
\leq
\frac{1}{4\sqrt{2}\,(D_j+1)}
\leq
\frac{1}{\sqrt{32\alpha_j}(D_j+1)}
\,,
\]
which shows that the condition~\eqref{assumption-pj} is satisfied.
To check the condition~\eqref{eqassumption3.1}, we write
\begin{align*}
    \frac{p_j^6\,
    \Upsilon_2(D_j)\exp\big((\alpha_j-\alpha_{j+1})2^{j/2}v\big)}
    {2^{j/2+2}v}
    &\geq
    \frac{p^6K}{6^{6j+j/2+2}v^{10}
    \ln\big(6^j\times 12vr\big)}
    \exp\bigg(\frac{1-2^{-1/4}}{2}\,2^{j/4}\sqrt{v}\bigg)
    \\
    &\geq
    \frac{p^6K}
    {4(4^jv)^{10}
    \big[\ln\big(24\sqrt{2/\mu}\big)+(3/2)\ln(4^jv)\big]}
    \exp\bigg(\frac{1-2^{-1/4}}{2}\,(4^jv)^{1/8}\bigg)
    \geq 1
\end{align*}
following~\eqref{def-v1} applied with~$4^jv$ instead of~$v$.
This shows that the condition~\eqref{eqassumption3.1} is also satisfied, allowing us to apply Lemma~\ref{lemma3.1}, to deduce that~$\cP(j)$ holds for all~$j\leq L$.
In particular we deduce that, starting with an initial
configuration~$U_0\sim\cP(A,\,\rho)$, the variable~$1+H^\star(A_L,\,f,\,g^J,\,U_0\cap A_L)$ dominates a geometric variable with parameter
\[
\exp\big(-\alpha_L|A_L|\big)
\leq
\exp\bigg(-\frac{\alpha_0}{2}\times\frac{|A|}{4}\bigg)
\leq
\exp\big(-\kappa n^2\big)\,.
\]
Then, recalling that our toppling strategy~$f$ chooses sites in~$A_L$ in priority
and that both the toppling strategy and the sleep mask
do not depend on the configuration outside~$A_L$, we
have
\[
T(A,\,f,\,g^J,\,U_0)
\ \geq\ 
T(A_L,\,f,\,g^J,\,U_0\cap A_L)
\ \geq\ 
H^\star(A_L,\,f,\,g^J,\,U_0\cap A_L)
\,,
\]
so that the stochastic domination also
holds for~$T(A,\,f,\,g^J,\,U_0)$.
Decreasing the constant~$\kappa$ if necessary so that the result also
holds when~$n\leq r$, the proof of
Proposition~\ref{prop-ARWD-stab-time-bis} in the two-dimensional case is
complete.
\end{proof}

\subsection{Concluding proof of Proposition~\ref{prop-ARWD-stab-time-bis} in the transient case}

We now study the case of dimension at least~$3$.

\begin{proof}[Proof of Proposition~\ref{prop-ARWD-stab-time-bis}, case~$d\geq 3$]
Let~$d\geq 3,\,\lambda>0,\,\mu\in(0,1)$ and~$\rho\in(0,1)$.
Let~$\beta=\rho/2$, let~$K$ be the constant given by Lemma~\ref{lemmahittingprob} applied in dimension~$d$
and let
\[
v
=
\left\lfloor
\frac{(1+\lambda)}{(1-\beta)K}
\right\rfloor
+1
\,.
\]
By definition of~$v$, for~$\alpha$ small enough we have
\[
(1+\lambda)(1-e^{-\alpha})
\leq
K\big(1-e^{-\alpha(1-\beta)v}\big)
\,.
\]
Let us fix such an~$\alpha$, and
let~$\kappa<(\alpha\beta\mu)\wedge(\rho^2\mu/2)$.

Then, given \smash{$n\geq\lceil(v/\mu)^{1/d}\rceil$} and~$A \subset \Z^d_n$ such that~$|A|\geq\mu n^d$, we consider
 the trivial hierarchy with only one level, defined by~$L = 0,\,A_0 = A$ and~$\cC_0 = \{A\}$
 and we apply Lemma~\ref{lemma6.2} with~$r=\infty$ (note that~$|A|\geq\mu n^d\geq v$).
 Thus, we obtain an~$A$-toppling procedure~$p_A$ such that, for every fixed~$U_0\subset A$ with~$|U_0|=\big\lfloor\beta|A|\big\rfloor$, the number~$\cN_{\cB}$ of times that the number of unstable sites passes above the threshold~$\beta|A|$ is such that~$1+\cN_{\cB}$ dominates a geometric random variable with parameter \smash{$e^{-\alpha(\beta |A|-2)}$} (applying a reasoning similar to that used in the proof of Lemma~\ref{lemma3.4}). Let us consider the variable~$J$ as defined in Section~\ref{sec:reformulated-goal}, along with the toppling strategy~$f$ and the sleep mask~$g^J$
derived from this hierarchy and these toppling procedures, as
explained in Section~\ref{section-def-f-g}. Note that $A \in \cC_0$ and thus $T(A,\,f,\,g^J,\,U_0)$ is independent of $J$, because $g^J$ has no influence on what happens at first level of the hierarchy during the stabilization of a cluster at level $0$.
Since~$T(A,\,f,\,g^J,\,U_0)\geq\cN_\beta$, we deduce that this stochastic domination also holds for~$T(A,\,f,\,g^J,\,U_0)$.

Then, to obtain the claimed result starting from a random initial
configuration~$U_0$ distributed as~$\cP_{A,\,\rho}$, we proceed as in
the end of the proof of Lemma~\ref{lemma3.4} using Hoeffding's
inequality to estimate~$\cP_{A,\,\rho}(|U_0|<\beta|A|)$.
We then obtain that, starting from~$U_0\sim \cP_{A,\,\rho}$, the variable~$T(A,\,f,\,g^J,\,U_0)$ dominates a geometric with parameter
\[
e^{-\alpha(\beta |A|-2)}
+
e^{\rho^2\mu n^d/2}
\leq
e^{-\kappa n^d}
\,,
\]
provided that~$n$ is taken large enough.
Decreasing~$\kappa$ so that the estimate holds for any~$n\in\N$, we obtain Proposition~\ref{prop-ARWD-stab-time-bis} in the transient case.
 \end{proof}

\section{Exponential stabilisation time on the torus: proof of Theorem~\ref{thm-exp-time} given Propositions~\ref{propdom} and~\ref{prop-ARWD-stab-time}}
\label{section-exp-time}

\begin{proof}[Proof of Theorem~\ref{thm-exp-time}]
Let~$d\geq 2$, let~$\lambda=(2d)^3$ and let~$\kappa$ be given by Proposition~\ref{prop-ARWD-stab-time} applied with~$\mu_0=1/5$ and~$\rho_0$ being the probability that a Poisson variable with parameter~$4/5$ is at least~$2$, i.e.,
\begin{equation}
\label{def-rho0}
\rho_0
=
1-\frac{1+4/5}{e^{4/5}}
\,.
\end{equation}
Let us consider the function
\[
\psi\,:\,\mu\in(0,1)
\ \mapsto\ 
-\mu\ln\mu-(1-\mu)\ln(1-\mu)
\,.
\]
Since~$\psi(\mu)\to 0$ when~$\mu\to 1$, we can find~$\mu_1\in(4/5,1)$ such that~$\psi(\mu_1)<\kappa/8$.

Let now~$\mu\in(\mu_1,1)$.
Let~$u>0$ be such that~$\varphi(u)>0$, where the function~$\varphi$ is given by
\[
\varphi\,:\,u\ \mapsto\ \mu (1-e^{-u})-\mu_1 u
\,.
\]
The existence of such a~$u$ follows from the fact that~$\varphi(0)=0$
and~$\varphi'(0)=\mu-\mu_1>0$.

 Let~$n\in\N$ be large enough so that
 \begin{equation}
 \label{assumption-n}
     \frac{(n-2)^d}{2}
     \geq
     \frac{2n^d}{5}
 \end{equation}
 and let~$\eta$ be a random SSM configuration on the torus~$\Z_n^d$ with i.i.d.\ Poisson numbers of particles on each site, with parameter~$\mu$.
Let~$M\in\N$.
Let us start by writing
\begin{equation}
\label{less-or-more}
    \cP^\mu\big(\|m_{\eta}\|<M\big)
    %&=\sum_{k=0}^{n^d}\cP^{\mu}\big(\|m_{\eta}\|<e^{a n^d},\,|\eta|=k\big)\\
    \leq
    P_\mu\big(|\eta|<\mu_1 n^d\big)
    +\sum_{k=\lceil \mu_1 n^d\rceil}^{n^d}
    \cP^\mu\big(\|m_{\eta}\|<M,\,|\eta|=k\big)
    \,.
\end{equation}
Note that if~$|\eta|> n^d$ then~$\|m_\eta\|=\infty$: this is why we
consider the sum only until~$k=n^d$.

To handle the first term in~\eqref{less-or-more}, recalling that~$|\eta|$ is a Poisson variable with parameter~$\mu n^d$, we use the following Chernoff bound:
\begin{equation}
    \label{Chernoff-bound}
    P_\mu\big(|\eta|<\mu_1 n^d\big)
    \leq
    E_\mu\big(e^{-u|\eta|}\big)
    e^{u \mu_1 n^d}
    =
    e^{-\varphi(u)n^d}
    \,.
\end{equation}
     We now deal with the second term of~\eqref{less-or-more}.
     Let us take~$\lceil \mu_1 n^d\rceil\leq k\leq n^d$.
     We first write
     \begin{equation}
     \label{sum-A}
   \cP^\mu\big(\|m_{\eta}\|<M,\,|\eta|=k\big)
   =
   \sum_{A\subset\Z_n^d\,:\,|A|=k}
   \cP^\mu\big(\|m_{\eta}\|<M,\,|\eta|=k,\,\eta_\infty=\mathbbm{1}_A\big)
   \,.
   \end{equation}
   Let us fix a set~$A\subset\Z_n^d$ with~$|A|=k$.
   Then, recall that Lemma~\ref{lemma-A-stab} tells us that if~$\eta_\infty=\mathbbm{1}_A$ then~$m_{\eta}=m_{\eta}^A$, whence
\begin{equation}
    \label{after-lemma-A-stab}
   \cP^\mu\big(\|m_{\eta}\|<M,\,|\eta|=k,\,\eta_\infty=\mathbbm{1}_A\big)
   \leq
   \cP^\mu\big(\|m_{\eta}^A\|<M,\,|\eta|=k\big)
   \,.
\end{equation}
   Then, conditioning on the initial configuration, we get
\begin{equation}
\label{conditioning-eta0}
\cP^\mu\big(\|m_{\eta}^A\|<M,\,|\eta|=k\big)
=
\sum_{\eta_0\in\N_0^V\,:\,|\eta_0|=k}
P_\mu\big(\eta=\eta_0\big)
\,\cP\big(\|m_{\eta_0}^A\|<M\big)
\end{equation}
We wish to apply Proposition~\ref{propdom}.
To this end, we need a totally disconnected subset of~$A$.
Considering the standard projection application~$\pi_n:\Z^d\to\Z_n^d$,
we consider the subset~$B_0$ of the ``even sites'' of the torus, removing a
line to avoid problems if~$n$ is odd, namely
\[
B_0
=
\pi_n\Big(\big\{(x_1,\,\dots,\,x_d)\in\{0,\,\dots,\,n-2\}^d\ :\ x_1+\cdots+x_d\text{ even}\big\}\Big)
\,.
\]
This set is totally disconnected and we have~$|B_0|\geq (n-2)^d/2\geq 2n^d/5$, following our assumption~\eqref{assumption-n} on~$n$.
Then, we simply consider the set~$B=A\cap B_0$, so that~$B$ is also
totally disconnected and satisfies~\smash{$|B|\geq |A|-3n^d/5\geq
n^d/5$}.
Let~$J$ and~$f^J$ be as in
Proposition~\ref{prop-ARWD-stab-time} applied with~$\lambda,\,\mu_0$
and~$\rho_0$ defined above, with~$\kappa$ already fixed above and
with this set~$B$, so that if~$U$ is distributed according
to~$\cP_{B,\,\rho_0}$
then~$1+T(B,\,f^J,\,U)$ dominates a
geometric with parameter~\smash{$\exp\big(-\kappa n^d\big)$}.

At this point, Proposition~\ref{propdom} entails that, for every fixed~$\eta_0\in\N_0^V$ with~$|\eta_0|=k$, we have
\[
\cP\big(\|m_{\eta_0}^A\|<M\big)
\leq
\bbP\big(T(B,\,f^J,\,U)<M\big)
\,,
\]
where~$U=U(\eta_0)=\{x\in B\,:\,\eta_0(x)\geq 2\}$ and~$T(B,\,f^J,\,U)$ is the stabilisation time defined in Definition \ref{def-stab-time-ARWD} for the~$\mathrm{ARWD}(\lambda,\,B,\,f^J,\,U)$ process (with the notation with subsets to represent the configurations, as presented in Section~\ref{sec:reformulated-goal}).
Plugging this into~\eqref{conditioning-eta0} leads to
\begin{align*}
\cP^\mu\big(\|m_{\eta}^A\|<M,\,|\eta|=k\big)
&\leq
\sum_{\eta_0\in\N_0^V\,:\,|\eta_0|=k}
P_\mu\big(\eta=\eta_0\big)
\,
\bbP\big(T(B,\,f^J,\,U(\eta_0))<M\big)
\\
&\leq
\sum_{\eta_0\in\N_0^V}
P_\mu\big(\eta=\eta_0\big)
\,
\bbP\big(T(B,\,f^J,\,U(\eta_0))<M\big)
\\
&=
\sum_{U\subset B}
\rho^{|U|}(1-\rho)^{|B\setminus U|}
\,
\bbP\big(T(B,\,f^J,\,U)<M\big)
\,,
\end{align*}
where~$\rho$ is the probability that a Poisson variable with parameter~$\mu$ is at least~$2$.
Note that since~$\mu\geq 4/5$ we have~$\rho\geq \rho_0$, with~$\rho_0$ defined by~\eqref{def-rho0}.
Now, since the toppling strategy~$f^J$ was provided by Proposition~\ref{prop-ARWD-stab-time}, we have
\[
\sum_{U\subset B}
\rho^{|U|}(1-\rho)^{|B\setminus U|}
\,
\bbP\big(T(B,\,f^J,\,U)<M\big)
\leq
\bbP\big(G_n\leq M\big)
\,,
\]
where~$G_n$ is a geometric variable with parameter~$\exp(-\kappa n^d)$.
Now, taking~$M=\lfloor e^{an^d}\rfloor$ with~$a=\kappa/2$, we have
\[
\bbP\big(G_n\leq e^{a n^d}\big)
=
1-\exp\big(\big\lfloor e^{\kappa n^d/2}\big\rfloor
\ln\big(1-e^{-\kappa n^d}\big)\big)
\leq
e^{-\kappa n^d/4}
\,,
\]
provided that~$n$ is large enough.
Combining all this and going back to~\eqref{sum-A} and~\eqref{after-lemma-A-stab}, we obtain
\[
\cP^\mu\big(\|m_{\eta}\|<e^{a n^d},\,|\eta|=k\big)
\leq
\binom{n^d}{k} e^{-\kappa n^d/4}
\,.
\]
Plugging this and~\eqref{Chernoff-bound} into~\eqref{less-or-more} then yields
\[
\cP^\mu\big(\|m_{\eta}\|<e^{a n^d}\big)
\leq
e^{-\varphi(u)n^d}
+
\sum_{k=\lceil \mu_1 n^d\rceil}^{n^d}
\binom{n^d}{k} e^{-\kappa n^d/4}
\leq
e^{-\varphi(u)n^d}
+
n^d\binom{n^d}{\lceil \mu_1 n^d\rceil}
e^{-\kappa n^d/4}
\,.
\]
Using now that
 \begin{equation*}
     \binom{n^d}{\lceil \mu_1 n^d \rceil}
=O\big(e^{\psi(\mu_1)n^d}\big)
     \,,
 \end{equation*}
along with our assumption that~$\psi(\mu_1)<\kappa/8$, we deduce that, taking~$b=\min(\varphi(u)/2,\,\kappa/16)$, provided that~$n$ is large enough,
\[
\cP^\mu\big(\|m_{\eta}\|<e^{a n^d}\big)
\leq
e^{-bn^d}
\,.
\]
Then, adapting  Section 3.10 of \cite{FG} to deal with the
continuous-time model with the exponential clocks, we obtain the
claimed property~\eqref{exponential-time}.
\end{proof}

 \section{Proof of Theorem~\ref{thm-torus-Zd}}

Theorem~\ref{thm-torus-Zd} is an analog of Theorem~4 of~\cite{FG},
which concerns activated random walks.
The proof extends to the SSM with only one minor change that we
explain below.

The proof of Theorem~4 of~\cite{FG} relies on
a criterion for non-fixation for ARW established by Rolla and Tournier
(Proposition~3 in~\cite{Rolla2}).
As explained by Rolla in~\cite{R}, this result extends to the SSM with
no substantial
change in the proof.
The proof of Theorem~4 of~\cite{FG} also uses a rough upper
bound for the stabilisation time of a box in ARW (Lemma~13
of~\cite{FG}), which also holds for the SSM, with the same
proof (apart from the point dealing with sleep instructions, but this
is harmless).

Then, the proof of Theorem~4 of~\cite{FG} extends to the SSM without
problem, apart from the fourth step of the procedure, which uses the
property that, in activated random walks, a configuration with~$k$
sites each containing one active particle, all other sites being stable,
can be stabilised with no particle moving, with
probability at least~$(\lambda/(1+\lambda))^k$: this simply happens if at each
of these~$k$ sites, the particle falls asleep before moving.

Thus, to adapt the proof to the SSM, it is enough to show
the following Lemma, which is an analog of this property for the SSM:

\begin{lemma}
For every graph~$G=(V,\,E)$ with maximal degree~$\Delta$, 
for every~$A\subset V$ finite and connected, for every
SSM configuration~$\eta:V\to\N_0$ with at most one particle on each site
of~$A$ and at least one empty site in~$A$, for every
odometer~$h:V\to(1/2)\N_0$, the probability that no particle exits~$A$
during the stabilisation of~$(\eta,\,h)$ in~$A$ with half-legal topplings
is at least~$\Delta^{-2|A|}$.
\end{lemma}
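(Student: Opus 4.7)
The plan is to lower-bound the probability by exhibiting a subset of instruction fields~$\tau$, of~$\cP$-probability at least~$\Delta^{-2|A|}$, on which some half-legal stabilising sequence for~$(\eta,h)$ in~$A$ stays inside~$A$. By the Abelian property for half-stabilisation in a fixed set (Section~\ref{sectionDiaconis}), for each realisation of~$\tau$, whether a particle exits~$A$ during the half-stabilisation of~$(\eta,h)$ in~$A$ does not depend on the choice of half-legal stabilising sequence, so it will suffice to provide \emph{one} such sequence.

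To produce this sequence, I will pick an empty vertex~$o\in A$ (which exists by hypothesis) and a spanning tree~$T$ of~$A$ rooted at~$o$, and construct a half-legal sequence~$\alpha=(x_1,\ldots,x_k)$ together with prescribed in-$A$ destinations~$(d_1,\ldots,d_k)$ by routing particles along the edges of~$T$. A natural construction is a two-sweep procedure: a first \emph{up-sweep} processes non-root vertices in reverse BFS order and pushes ``excess'' particles towards the root, while a subsequent \emph{down-sweep} redistributes any overflow at the root (or at intermediate vertices) into currently empty descendants. Using the fact that each site of~$A$ starts with at most one particle and that~$A$ has an empty site, the half-toppling destinations can be chosen so that each directed tree edge is used at most once. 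This will yield a sequence of length~$k\leq 2(|A|-1)<2|A|$, with all half-topplings at sites of~$A$ and with every particle kept inside~$A$. Correctness and the length bound will be verified by induction on the BFS level, tracking both particle counts and odometer parities at each vertex.

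Once the sequence and prescribed destinations are in place, the probability bound is immediate: under~$\cP$ the instructions at distinct pairs~$(x,j)$ are independent and each matches our prescription with probability~$1/d_{x_i}\geq 1/\Delta$, so the event that~$\tau$ agrees with all~$k\leq 2|A|$ prescriptions has probability at least~$\Delta^{-k}\geq\Delta^{-2|A|}$. On this event, $\alpha$ is half-legal, stabilises~$A$ and keeps particles inside~$A$; by the Abelian property invoked at the start, no particle then exits~$A$ in any half-legal stabilisation of~$(\eta,h)$ in~$A$, completing the proof.

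The hard part will be the two-sweep construction, in particular the claim that one can always choose the destination of each half-toppling so that each tree edge is used at most once per direction, while both the particle-count and odometer-parity conditions for half-stability are simultaneously satisfied at every vertex of~$A$ at the end. The subtle point is that, when a vertex in the up-sweep receives more than one particle from its children, one cannot simply send all excess upwards (otherwise the parent edge would be used several times); instead, one has to distribute some of the excess back down to now-empty children, and this redistribution has to be compatible with the odometer parities inherited from~$h$.
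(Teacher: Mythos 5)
Your approach is genuinely different from the paper's, but it is not carried to completion and the gap you yourself flag is a real one. The paper proceeds by induction on $|A|$: it removes one vertex~$x$ at a time (chosen so that $A\setminus\{x\}$ stays connected), pays a factor~$\Delta^{-2}$ for a ``there-and-back'' half-toppling of $x$'s particle to a neighbour~$z$ and back, which flips the odometer parity at~$x$ and makes~$x$ half-stable, then recurses on $A\setminus\{x\}$. The resulting sequence is only \emph{admissible}, not half-legal (the return half-toppling at~$z$ need not be half-legal), and the paper closes the argument by the monotonicity of the odometer and of the particle-exit count (Lemma~6 of~\cite{RS}), which upgrades ``some admissible sequence half-stabilises $A$ without exiting'' to ``the half-legal stabilisation does not exit.'' You instead insist on exhibiting a \emph{half-legal} sequence so as to invoke the Abelian property directly, which is a strictly harder combinatorial task.

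The gap is the unverified two-sweep construction. You do not show that one can always pick destinations so that each directed tree edge is used at most once \emph{and} every vertex ends with the correct parity: as you note, when a vertex receives particles from several children you must route some back down, and this interacts with the inherited parities of~$h$. In fact, simple examples (e.g.\ a star with centre~$c$ carrying an integer-odometer particle and all leaves carrying half-integer-odometer particles) show that the parity at~$c$ after the up-sweep depends on the number of leaves processed, and fixing it generally requires diverting a particle to the empty site in a way that a ``first all up, then all down'' sweep does not naturally produce; whether this can always be done within the budget of one use per directed edge is exactly what needs a proof and is not given. Also, even setting aside the edge-count, the up-sweep as described may want to half-topple a vertex with one particle and integer odometer (half-stable), which is not a half-legal move, so half-legality of the whole sequence is itself unproven. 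The paper's admissible-plus-monotonicity route sidesteps all of these parity bookkeeping issues and is why it goes through cleanly; if you want to salvage your tree-based construction, the natural repair is to drop the insistence on half-legality, allow admissible half-topplings, and then invoke the same monotonicity lemma as the paper does.
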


\begin{proof}
Let~$G=(V,\,E)$ be a graph with maximal degree~$\Delta$.
We prove the result by induction on~$|A|$.
If~$|A|=1$ then there is nothing to show, since we assume that at
least one site of~$A$ is empty.

Assume that~$k\in\N$ is such that the result holds for every~$A\subset
V$ with cardinality~$|A|=k$.

Then, let~$A\subset V$ connected with~$|A|=k+1$,
let~$\eta:V\to\N_0$ with at most one particle on each site
of~$A$ and at least one empty site in~$A$, and let~$h:V\to(1/2)\N_0$.
Let~$x$ and~$y$ be two distinct sites of~$A$ such
that both~$A\setminus\{x\}$ and~$A\setminus\{y\}$ remain connected
(take for example two leaves of a rooted spanning tree of~$A$, or the root
and the leaf if the tree has only one leaf).

If~$\eta(x)=\eta(y)=0$, then the result follows from the induction
hypothesis applied to~$A\setminus\{x\}$.

Assume now that at least one of these two points contains a particle:
assume for example that~$\eta(x)=1$.
If~$h(x)$ is integer, then the result also follows from the induction
hypothesis applied to~$A\setminus\{x\}$, because~$x$ is already
half-stable in~$(\eta,\,h)$.
Assume now that~$h(x)$ is not integer.
Let~$z$ be a neighbour of~$x$ in~$A$.
Then, we perform one half-legal half-toppling at~$x$.
With probability at least~$1/\Delta$, a particle jumps from~$x$ to~$z$.
Then, we perform an admissible half-toppling at~$z$.
With probability at least~$1/\Delta$, a particle jumps back from~$z$ to~$x$, and we
obtain the configuration~\smash{$(\eta,\,h+\frac 1 2 \mathbbm{1}_x+\frac 1
2 \mathbbm{1}_z)$}, in which the
site~$x$ is now half-stable.
Applying the induction hypothesis to this configuration
in~$A\setminus\{x\}$, we know that with probability at
least~$\Delta^{-2k}$,
the configuration stabilises in~$A\setminus\{x\}$ with no particle
jumping out of~$A\setminus\{x\}$.
Thus, with probability at least~$\Delta^{-2k-2}$, there exists an
admissible
half-toppling sequence which half-stabilises~$(\eta,\,h)$ in~$A$ with no
particle jumping out of~$A$.
This implies the result by \cite[Lemma 6]{RS}, noting that the number of
particles which jump out of~$A$ is a non-decreasing function of the
odometer.
\end{proof}

With the notation of the proof of Theorem~4 in~\cite{FG}, this lemma
enables to lower bound the
success probability of the fourth step of the procedure described
in~\cite{FG}, applying the lemma to~$A=B_0\setminus B_3$ (or to its
two connected components if~$d=1$), and adjusting the constants
accordingly.

\bibliographystyle{plain}
\bibliography{ssm.bib}

\end{document}